\pgfplotsset{compat=1.17}
\newtheorem{assumption}[theorem]{Assumption}
\begin{document}

\title{A Neural Network Approach to Learning Solutions of a Class of Elliptic Variational Inequalities}

\author{\name Amal Alphonse \email alphonse@wias-berlin.de \\
       \addr Weierstrass Institute\\
        Mohrenstrasse 39\\
         10117 Berlin, Germany
       \AND
       \name Michael Hinterm\"uller \email hintermueller@wias-berlin.de, hint@math.hu-berlin.de \\
       \addr Weierstrass Institute\\
        Mohrenstrasse 39\\
         10117 Berlin, Germany\\
	 \addr Humboldt-Universit\"at zu Berlin\\
	 Unter den Linden 6\\
	 10099 Berlin,  Germany
	\AND
	 \name Alexander Kister \email alexander.kister@bam.de\\
	 \addr Federal Institute for Materials Research and Testing (BAM)\\
	  Unter den Eichen 87\\
	   12205 Berlin, Germany  	
	   \AND 
	   \name Chin Hang Lun \email clun@tripadvisor.com\\
	   \addr Tripadvisor Ltd\\
            10 Norton Folgate   \\
            London, E1 6DB, United Kingdom
	   \AND 
	   \name Clemens Sirotenko \email sirotenko@wias-berlin.de \\
	   \addr Weierstrass Institute\\
        Mohrenstrasse 39\\
         10117 Berlin, Germany
	 }

\editor{Silvia Villa}

\maketitle

\begin{abstract}We develop a weak adversarial approach to solving obstacle problems using neural networks. By employing (generalised) regularised gap functions and their properties we rewrite the obstacle problem (which is an elliptic variational inequality) as a minmax problem, providing a natural formulation amenable to learning. Our approach, in contrast to much of the literature, does not require the elliptic operator to be symmetric. We provide an error analysis for suitable discretisations of the continuous problem, estimating in particular the approximation and statistical errors.  Parametrising the solution and test function as neural networks, we apply a modified gradient descent ascent algorithm to treat the problem and conclude the paper with various examples and experiments. Our solution algorithm is in particular able to easily handle obstacle problems that feature biactivity (or lack of strict complementarity), a situation that poses difficulty for traditional numerical methods. 
\end{abstract}

\begin{keywords}
	variational inequalities, neural networks, weak adversarial networks, infsup problems, nonsmooth optimisation
\end{keywords}

\section{Introduction}
In this paper, we use neural networks to find solutions of variational  inequalities (VIs) of the following type: 
\begin{equation}
\text{find $u \in K$} : \langle Au-f , u-v \rangle_{\X^*,\X} \leq 0 \quad \text{for all $v \in K$,}\label{eq:VI}
\end{equation}
where $\X:= H^1(\Omega)$ is the usual Sobolev space on a bounded Lipschitz domain $\Omega \subset \mathbb{R}^n$, $\langle \cdot,\cdot\rangle_{V^*,V}$ denotes the duality pairing between $V$ and its topological dual $V^*$, the constraint set $K$ is defined as 
\begin{equation}
K := \left \lbrace u \in H^1(\Omega) \mid u \geq \psi \text{ in $\Omega$}, u = h \text{ on $\partial\Omega$} \right \rbrace,\label{eq:defn_K}
\end{equation}
$h \in H ^{1\slash 2}(\partial\Omega)$ is given boundary data, $\psi \in H^1(\Omega)$ is a given obstacle that satisfies $\psi \leq h$ on $\partial\Omega$, and $f \in L^2(\Omega)$ is a given source term. Concerning the involved Sobolev and Lebesgue spaces we refer, e.g., to \citet{MR2424078}. The operator $A\colon K \subset \X \to \X^*$ appearing in \eqref{eq:VI} is assumed to be Lipschitz and coercive, i.e., there exist constants $C_a, C_b > 0$ such that
\begin{subequations} \label{eq:conditions_on_A}
    \begin{align}
        \norm{Au-Av}{\X^*} &\leq   C_b\norm{u-v}{\X} \quad \forall u, v \in K,\label{eq:conditions_on_A_1}\\
    \langle Au-Av, u-v \rangle_{V^*, V} &\geq C_a\norm{u-v}{\X}^2 \quad\forall u, v \in K,
    \label{eq:conditions_on_A_2}
    \end{align}
\end{subequations}
and for simplicity, we focus our attention on linear differential operators of the form
\begin{equation}
\langle Au, u-v \rangle_{V^*, V} = \int_\Omega \grad u \cdot \grad (u-v) + \sum_{i=1}^n b_i \partial_{x_i}u(u-v) + ku(u-v),\label{eq:operator_A}    
\end{equation}
where $k, b_i \geq 0$, $i=1,\ldots,n$, are constants (which of course have to be such that \eqref{eq:conditions_on_A} is satisfied), $\partial_{x_i}u$ is the weak partial derivative of $u$ with respect to the $i^{\text{th}}$ coordinate and $\grad u$ is the weak gradient of $u$. In operator form we have $A =-\Delta + \sum_{i=1}^n b_i \partial_{x_i} + k\mathrm{Id}$ with $\Delta$ representing the weak Laplacian and $\mathrm{Id}$ the identity map. The setting $b_i = 0$ for $i=1, \hdots, n$, $k=0$, $h\equiv 0$ is the prototypical example of an elliptic variational inequality and is commonly referred to as the obstacle problem \citep{MR0567696}. 

Variational inequalities of the type \eqref{eq:VI} have numerous applications in diverse scientific areas; we mention in particular contact mechanics, processes in biological cells, ecology, fluid flow, and finance, see for example \citet{Rodrigues, MR0567696, MR2488869}. They are also fundamental objects of study in applied analysis due to their interesting structure. Indeed, VIs are examples of free boundary problems. Obstacle  problems sometimes are stated in the form
\begin{subequations}\label{eq:CS}
\begin{align}
0\leq (Au-f)\perp(u-\psi) &\geq  0 \quad\text{a.e. on $\Omega$},\\
u &= h \quad\text{a.e on $\partial\Omega$},
\end{align}
\end{subequations}
where $a\perp b$ stands for $ab=0$. This formulation is equivalent to \eqref{eq:VI} under sufficient regularity (see Proposition \ref{prop:VIandCSEquivalence}).  Classical methods for solving obstacle problems or variational inequalities include projection methods, multilevel and multigrid methods \citep{MR1310316, MR0909064}, primal dual active set strategies and path following schemes \citep{MR2219149}, semismooth Newton schemes \citep{MR1972219,MR1972649}, shape and topological sensitivity based methods \citep{MR2653723,MR2806573},
 level set methods and discontinuous Galerkin schemes \citep{MR2670002}; see also \citet{Glowinski, Bartels,MR2026461, MR0737005} and references therein.

The aim of this work is to formulate, analyse and implement a deep neural network approach to compute solutions of obstacle problems like \eqref{eq:VI} or \eqref{eq:CS}. More specifically, we rephrase the VI as a minmax optimisation problem involving minimisation over the feasible set and maximisation over all feasible test functions, both of which are parametrised by neural networks, and we use a modified gradient descent ascent scheme to numerically solve for the solution.  Our motivation stems from the fact that neural networks can efficiently represent nonlinear, nonsmooth functions and have the added advantage of not being intrinsically reliant on a mesh: they provide a naturally global and meshless representation of the solution, offering an advantage over other methods such as finite elements. Furthermore, they are able to handle complicated geometries and high-dimensional problems without great cost. Our work can also be considered as a first step in studying more complicated problems involving for example operator learning.

Related papers in the literature addressing solving elliptic variational inequalities or hemivariational inequalities via neural networks include \cite{DeepNeural, TwoNN, MR4434275, MR4407899, bahja2023physicsinformedneuralnetworkframework}. A typical path taken by many works entails rewriting \eqref{eq:VI} as a minimisation problem. Indeed, if the operator $A$ generates a bilinear form which is symmetric, then, as explained, e.g., in \cite[\S 4:3, Remark 3.5, p.~97]{Rodrigues}, the VI \eqref{eq:VI} is equivalent to the minimisation problem
 \[\min_{u \in K} \frac 12 \langle Au, u \rangle - \langle f, u \rangle.\]
This formulation gives rise to a natural loss function that can be tackled via neural networks, as done in \cite{DeepNeural, MR4407899, TwoNN}. If $A$ is non-symmetric, this equivalence is not available and one cannot in general pose an associated minimisation problem.  
However, as we shall see later, \eqref{eq:VI} is equivalent to the minmax problem
\begin{equation}\label{weak.VI}
\min_{u \in K}\max_{v \in K} \langle Au-f, u-v \rangle - \frac{1}{2\gamma}\norm{u-v}{V}^2
\end{equation}
for a given parameter $\gamma > 0$, regardless of whether $A$ is symmetric or not.  This problem formulation appears very natural since it resembles the notion of weak formulations in PDEs, which are well understood.

\corr{Classical numerical methods for minmax problems, including asymmetric saddle-point formulations, are typically cast in the framework of variational inequalities and monotone operators. This includes first order schemes such as projection, proximal point, primal-dual, and extragradient methods, which exploit structural properties such as monotonicity and in convex-concave settings often admit rigorous convergence guarantees \citep{Nemirovski2004,CombettesPesquet2012}. Alternatively, one may solve the associated complementarity system directly, or use saddle point, gap function, or regularised gap function reformulations, which recover a merit functional that is often differentiable after regularisation, even in the asymmetric setting, see \citet{Glowinski,Fukushima,MR1274172,MR1430295}. In PDE contexts, discretisation of these formulations leads for instance to monotone multigrid methods, augmented Lagrangian and primal-dual active set strategies, path-following methods, and semismooth Newton algorithms, which exploit the complementarity structure and are attractive because they typically deliver high accuracy together with rigorous convergence and mesh-refinement theory, see e.g. \citet{MR1310316,MR1972219,ItoKunisch2003,MR2026461}. The main drawback of these discretise-then-solve methods is that they remain fundamentally mesh-based, can be less convenient on complicated geometries or in high-dimensional or parametric settings, and may become delicate in the presence of biactivity or lack of strict complementarity. 

In order to approximate \eqref{weak.VI} we express $u$ and the test function $v$ by deep neural networks. This technique falls into the class of weak adversarial network (WAN) problems in the spirit of \citet{Zang}; the maximisation for the test function acts as an adversarial force against the minimisation for the solution. In addition to \citet{Zang}, see also e.g. \citet{Bao, bertoluzza2024bestapproximationresultsessential, MR4416735, Jiao2023} for weak adversarial approaches for PDEs and related theory. Moreover, our approach is related to Physics Informed Neural Networks (PINNs); see, e.g., \citet{Lagaris98, RAISSI2019686}.  More specifically it can be viewed as a weak PINNs-type approach with a hard constrained boundary condition. While neural network approaches enable greater flexibility and scalability in high-dimensional settings, it fundamentally alters the problem structure: the resulting optimisation landscape may contain spurious stationary points that do not correspond to meaningful minmax solutions, and commonly used algorithms may exhibit instability or fail to converge. As a result, theoretical guarantees are typically weaker and limited. Despite these challenges, neural network-based approaches offer significant advantages in terms of mesh-independence, flexibility, scalability, and their ability to handle high-dimensional and complex problem settings, and this essentially motivates this work.}

In this paper, we will provide a theoretical justification for the minmax problem, a discretised formulation of the problem amenable to computation, an error analysis as well as comprehensive numerical simulations.  As we mentioned above, a special highlight of our work is that we can also tackle non-symmetric problems, e.g., $A$ given as in \eqref{eq:operator_A} with $b_i \neq 0$ for at least one $i=1, \hdots, n$.

\corr{We briefly summarise the main results of this paper. At the continuous level we  give a reformulation of the VI \eqref{eq:VI} as the minimisation of a regularised gap function in Lemma \ref{lem:regularisedGapVI}; this gives rise to the minmax problem \eqref{eq:minMaxDiagonalProblem}, involving constrained minimisation with respect to the solution and constrained maximisation with respect to the test function. Regarding the mentioned regularised gap function, an important tool that will facilitate error estimates later is given in Lemma \ref{lem:error_estimate_G_gamma}. We consider various relaxations of the problem that inexactly enforce the boundary condition and/or obstacle constraint via penalisations in \S \ref{sec:relaxations}, including \eqref{eq:cts_ansatz} where the boundary conditions are imposed exactly and the obstacle constraint is penalised; the discretised version of this is what we shall solve numerically later. For the discretised problem \eqref{eq:disc_ansatz} corresponding to \eqref{eq:cts_ansatz}, obtained by restricting the solution and test spaces to neural network classes and replacing integrals by empirical averages, a notion of existence of a solution to the resulting finite-dimensional minmax problem is established under suitable assumptions in Corollary \ref{cor:existence_disc_problems}. An \emph{a priori} error estimate between the exact solution and a computed discrete solution is given in \eqref{eq:full_error_estimate}; it is decomposed into three contributions:
(i) an \emph{approximation error}, reflecting the approximation properties of the chosen neural network classes for both the solution and test variables;
(ii) a \emph{statistical error}, arising from the replacement of integrals by Monte Carlo averages; and
(iii) an \emph{optimisation error}, due to the inexact solution of the discrete minmax problem by a chosen algorithm.
We show that the approximation error can be controlled essentially by making the widths and depths of the neural network classes sufficiently large, see Theorem \ref{thm:approximation_error}, and that the statistical error admits an explicit rate  of order $\mathcal{O}(N^{-1/4})$ with respect to the number of collocation points $N$, see Theorem \ref{thm:stat_error}. Finally, in Section \ref{sec:numerical_examples}, we  close the paper with a number of examples and examine some qualitative properties such as mesh independence.
}

\section{Analysis of the continuous problem}
We begin with some theoretical results concerning the VI \eqref{eq:VI}.
\subsection{Basic properties and saddle points}
Let us first of all address existence and uniqueness for \eqref{eq:VI}. Since $A$ is coercive and Lipschitz on $H^1(\Omega)$ and $K$ is non-empty\footnote{By properties of the trace operator \cite[Theorem 8.8, Chapter 1]{MR895589}, there exists a function $w \in H^1(\Omega)$ with $w|_{\partial\Omega} = h$, and the function $\max(w, \psi) \in H^1(\Omega)$ belongs to $K$.}, closed and convex, well posedness follows from the classical Lions--Stampacchia theorem \cite[\S 4:3, Theorem 3.1]{Rodrigues}, see also \citet[\S 4:2, Theorem 2.3]{Rodrigues} for the case where $A=-\Delta$.

\begin{proposition}[$H^2$-regularity]\label{prop:h2_regularity}
Let $A:= -\Delta + \sum_{i=1}^n b_i\partial_{x_i} + c\mathrm{Id}$ 
    with coefficients $b_i, c \in L^\infty(\Omega)$, $c \geq c_0 \geq 0$ for a constant $c_0$, such that the coercivity condition \eqref{eq:conditions_on_A_2} is satisfied, $f \in L^2(\Omega)$, $h \in H^{3\slash 2}(\partial\Omega)$, $\psi \in H^2(\Omega)$ with $\psi|_{\partial\Omega} \leq h$, and let $\Omega$ be convex or $C^{1,1}$. Then the solution of \eqref{eq:VI} has the regularity $u \in H^2(\Omega)$. Furthermore, we have the \emph{a priori} estimate
$\norm{u}{H^2(\Omega)} \leq C^*$,
where $C^*$ is a constant (that depends in particular on $f, \psi$ and  $h$).

\end{proposition}
\begin{proof}
In the case $h \equiv 0$, this follows by the Lewy--Stampacchia inequality \cite[Theorem 3.3, \S 5:3]{Rodrigues} 
    $f \leq Au \leq \max(f, A\psi)$
and from elliptic estimates which bound the $H^2$-norm of $u$ by the $L^2$ norm of $Au$ and the $H^1$-norm of $u$,
    see e.g. \citet[Theorem 3.4, \S 5:3]{Rodrigues} for the case $\Omega \in C^{1,1}$, and \citet[Proposition 2.5]{MR2333653} for when $\Omega$ is convex.  In the general inhomogeneous case of $h \in H^{3\slash 2}(\partial\Omega)$, we can find $\tilde h \in H^2(\Omega)$ with $\tilde h|_{\partial\Omega} = h$ and make a substitution $\tilde u := u-\tilde h$ like done in the proof of \citet[Theorem 2.3, \S 4:2]{Rodrigues} and resort to the above situation.
\end{proof}
For a direct proof in the setting when $A=-\Delta$, see \citet[Proposition 2.2 and Corollary 2.3, \S 5:2]{Rodrigues}. 
We will now consider equivalent reformulations of the problem \eqref{eq:VI}. Since the proof of the following result mostly follows the argumentation in \citet[\S 1:3, p.~4]{Rodrigues}, we have placed it in Appendix \ref{app:proofs}.
\begin{proposition}\label{prop:VIandCSEquivalence}
    If $u \in H^1(\Omega)$ satisfies $Au \in L^2(\Omega)$ and \eqref{eq:CS}, then it solves \eqref{eq:VI}. Conversely, if $u$ solves \eqref{eq:VI} and satisfies $u \in C^0(\overline{\Omega})$ and $Au \in L^2(\Omega)$, and $\psi \in C^0(\overline{\Omega})$, then $u$ solves \eqref{eq:CS}. \end{proposition}
The VI \eqref{eq:VI} has a saddle point reformulation \citep[as discussed in][page 14]{Glowinski}. Let us recall the definition first. \begin{definition}
Given a map $\mathfrak{f}\colon X \times Y \to \mathbb{R}$, a pair $(x^*,y^*) \in X \times Y$ is called a \emph{saddle point} \citep[\S 9.6]{ZeidlerI} if it satisfies 
\[    \mathfrak{f}(x^*,y) \leq \mathfrak{f}(x^*, y^*)  \leq \mathfrak{f}(x,y^*) \quad \text{$\forall x \in X,$ $\forall y \in Y$,}     
\]
or equivalently
\[    \max_{y \in Y}\mathfrak{f}(x^*, y) = \mathfrak{f}(x^*, y^*)  = \min_{x \in X}\mathfrak{f}(x,y^*) \quad  \text{$\forall x \in X,$ $\forall y \in Y$.}     
\]
\end{definition}
Associated to the VI \eqref{eq:VI} we define a map $L\colon V \times V \to \mathbb{R}$ as
\begin{equation}
    L(u,v)  := \langle Au -f,u-v \rangle. \label{eq:def_L}
\end{equation}
Note that $L(\cdot,v)$ is convex and $L(u,\cdot)$ is concave for fixed $u, v \in V$.
From, e.g., \cite[Corollary 9.16]{ZeidlerI}, if $(u, v)$ is a saddle point then it satisfies
\begin{equation}
L(u, v) = \min_{\tilde u \in K} \sup_{\tilde v \in K} L(\tilde u, \tilde v) = \max_{\tilde v \in K}\inf_{\tilde u \in K} L(\tilde u, \tilde v).\label{eq:min_sup_problem}    
\end{equation}
Moreover, $L$ has a saddle point if and only if the second equality holds. Note that we have supremum and infimum above as these optimising elements may not be attained in general. However if $K$ is a bounded set, then the $\sup$ and $\inf$ can be replaced by $\max$ and $\min$. In addition to the above references we cite also \citet[Chapter VI, \S 1]{EkelandTemam} for more details. Now, regarding the connection of saddle points of $L$ to the VI \eqref{eq:VI}, we have the following result, see \citet[Remark 2.7]{Glowinski}. \begin{proposition}\label{prop:saddle_theorem}
If $u \in K$ is a solution of \eqref{eq:VI} then $(u,u)$ is a saddle point of $L$ on $K \times K$. Conversely, if $(u,v)$ is a saddle point of $L$ on $K \times K$ then $u = v$ and $u$ solves \eqref{eq:VI}. 
\end{proposition}

\subsection{Minmax approach via the regularised gap function}

As described above, the saddle point formulation of the VI is a $\min\sup$ problem involving $L(u,v) = \langle Au-f, u-v \rangle$. It turns out that modifying $L$ by adding a quadratic term makes the problem more tractable and endows it with better properties. First, let us make the following assumption which allows us to be more general with the formulation of the $\min\sup$ problem.
\begin{assumption}\label{ass:on_spaces}
Let $u^*$ be a solution of \eqref{eq:VI}, and let $X$ and $Y$ be sets such that
\begin{enumerate}[label=(\roman*)]\itemsep=0cm
\item $u^* \in X \cap Y$, 
\item $K \cap X \subset K \cap Y$,   
\item $K \cap Y$ is convex and closed\footnote{The closedness is required for the unique solvability of the minmax problem \eqref{eq:prelim1}.}.  
\end{enumerate}
\end{assumption}
We have in mind two examples: the standard  setting $X=Y=V$, in which case the intersections above involving $K$ are simply equal to $K$, and secondly $Y=B^{H^2(\Omega)}(r)$ the closed ball in $H^2(\Omega)$ with center $0$ and an appropriately chosen radius $r$ (which is valid when $u^*$ has the regularity $u^* \in H^2(\Omega)$ and we have an estimate for its norm in that space; see Proposition \ref{prop:h2_regularity} for conditions that ensure this) and $X \subset Y$ is some subset. This latter option is useful because it allows us to use neural network approximation results in the error analysis later.

Now, let us introduce, for a fixed $\gamma  > 0$,  the map $L_\gamma \colon V \times V \to \mathbb{R}$ defined
\[L_\gamma(u,v) := L(u,v) - \frac{1}{2\gamma}\norm{u-v}{\X}^2,\]
and the \emph{generalised regularised gap function} $G_\gamma\colon V \to \mathbb{R}$ given by
\begin{align*}
G_\gamma(u) &:= \sup_{v \in K \cap Y} L_\gamma(u,v)
= \sup_{v \in K \cap Y} \left(L(u,v) - \frac{1}{2\gamma}\norm{u-v}{\X}^2\right).
\end{align*}
When $X=Y=V$, $G_\gamma(\cdot) = \sup_{v \in K} L_\gamma(\cdot, v)$ is known in the literature as the \emph{regularised gap function}. In addition, the case $\gamma=\infty$, i.e., the function $G_\infty(u) = \sup_{v \in K} L(u,v)$ is called a \emph{gap function} (it is in general non-differentiable and the supremum may be infinite). For simplicity we will just refer to $G_\gamma$ above without reference to `generalised'. The quadratic term in $G_\gamma$ serves to add differentiability and ensures that the supremum is finite.  These concepts and properties when $X=Y$ is the whole space $V$ are well known, see \citet{Auslender, Fukushima, Auchmuty} for the originating works; we refer also to \citet{MR1274172, MR1430295}.

Since $\norm{\cdot}{\X}^2$ is strongly convex, it follows that $-L_\gamma(u,\cdot)$ is strongly convex for fixed $u$; this will be important later for existence. For now, we prove some basic properties. \begin{lemma}\label{lem:regularisedGapVI} 
The function $G_\gamma$ satisfies the following properties:
\begin{enumerate}[label=(\roman*)]\itemsep=0cm
    \item $G_\gamma$ is finite everywhere on $V$,
    \item\label{item:GgammaNonNeg} $G_\gamma(u)\geq 0$ for $u \in K \cap X$,
    \item\label{item:uiffGgamma0} $u \in K \cap X$ satisfies $G_\gamma(u) = 0$ if and only if $u$  solves \eqref{eq:VI},
\item $G_\gamma \colon K\cap X \subset V \to \mathbb{R}$ is lower semicontinuous.
\end{enumerate}
\end{lemma}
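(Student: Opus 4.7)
The plan is to dispatch the four items essentially independently, using the Lipschitz/coercivity properties of $A$, the structure of $L_\gamma$, and convexity of $K \cap Y$.

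For \emph{(i)}, I would fix $u \in V$ and estimate $L_\gamma(u,v)$ uniformly in $v \in K \cap Y$. By duality, $\langle Au - f, u-v\rangle \leq \|Au-f\|_{\X^*}\|u-v\|_{\X}$, and an application of Young's inequality with weights $\gamma$ and $1/\gamma$ absorbs the linear term into the quadratic penalty: $\|Au-f\|_{\X^*}\|u-v\|_{\X} \leq \tfrac{\gamma}{2}\|Au-f\|_{\X^*}^2 + \tfrac{1}{2\gamma}\|u-v\|_{\X}^2$. This yields $L_\gamma(u,v) \leq \tfrac{\gamma}{2}\|Au-f\|_{\X^*}^2$ independently of $v$, so $G_\gamma(u) < \infty$.

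For \emph{(ii)}, for $u \in K \cap X$ I use Assumption~\ref{ass:on_spaces}(ii), which gives $u \in K \cap Y$, so $v = u$ is admissible in the supremum and yields $L_\gamma(u,u) = 0$, hence $G_\gamma(u) \geq 0$. For \emph{(iii)}, the forward direction is immediate: if $u$ solves \eqref{eq:VI} then $\langle Au-f, u-v\rangle \leq 0$ for all $v \in K$, a fortiori for $v \in K \cap Y$, and subtracting the nonnegative term $\tfrac{1}{2\gamma}\|u-v\|_{\X}^2$ keeps the bound nonpositive, so $G_\gamma(u) \leq 0$; combined with (ii) this gives $G_\gamma(u) = 0$. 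The converse is the main obstacle, because the supremum only runs over $K \cap Y$, which may be strictly smaller than $K$, so we do not immediately recover a variational inequality on $K$. The idea is first to use convexity of $K \cap Y$ to obtain a variational inequality \emph{restricted} to $K \cap Y$: for any $w \in K \cap Y$ and $t \in (0,1]$, the convex combination $v_t := u + t(w-u)$ lies in $K \cap Y$, and $L_\gamma(u, v_t) \leq G_\gamma(u) = 0$ gives, after dividing by $t$ and letting $t \to 0^+$, that $\langle Au - f, u-w\rangle \leq 0$ for all $w \in K \cap Y$. Then, since $u^* \in K \cap Y$ by Assumption~\ref{ass:on_spaces}(i) also satisfies this restricted VI (it even satisfies the full VI on $K$), a standard monotonicity argument — subtracting the two inequalities obtained by testing $u$ against $u^*$ and vice versa and invoking the coercivity \eqref{eq:conditions_on_A_2} — forces $u = u^*$, and hence $u$ solves the full VI \eqref{eq:VI}.

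For \emph{(iv)}, the strategy is the classical one: $G_\gamma$ is a pointwise supremum of the family $\{L_\gamma(\cdot, v)\}_{v \in K \cap Y}$, and each map $u \mapsto L_\gamma(u,v)$ is continuous on $V$ — the quadratic term is continuous by the continuity of the norm, while $u \mapsto \langle Au-f, u-v\rangle$ is continuous thanks to the Lipschitz bound \eqref{eq:conditions_on_A_1} on $A$ together with boundedness of $\|Au-f\|_{\X^*}$ on bounded sets. A pointwise supremum of a family of continuous functions is lower semicontinuous, which concludes the proof.
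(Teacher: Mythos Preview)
Your proof is correct and follows essentially the same approach as the paper's: the Young's inequality bound for (i), the test $v=u$ for (ii), the convexity-and-limit argument followed by the coercivity comparison with $u^*$ for the converse in (iii), and the supremum-of-continuous-functions argument for (iv) all match the paper's treatment. The only cosmetic difference is that you spell out the Young weight explicitly and state the lower-semicontinuity principle directly, whereas the paper leaves (i) as a one-line remark and cites \cite{Migorski} for (iv).
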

The proof is standard for the usual case $X=Y=V$, see e.g., \citet[Theorem 3.1]{Migorski}. 
\begin{proof}
\begin{enumerate}[label=(\roman*)]\itemsep=0cm
\item This follows easily by Young's inequality with parameter $\epsilon>0$ chosen carefully. 

\item We have $G_\gamma(u) \geq L_\gamma(u,v)$ for all $v \in K \cap Y$ and we can select $v = u$ since $u \in K \cap X \subset K \cap Y$. The claim follows from $L_\gamma(u,u)=0$.

\item If $u$ solves \eqref{eq:VI}, then $G_\gamma(u) \leq \sup_{v \in K \cap Y} -\frac{1}{2\gamma}\norm{u-v}{\X}^2 \leq 0$, which combined with the non-negativity of $G_\gamma$ gives $G_\gamma(u)=0$. 

For the converse, if $u \in K\cap X$ satisfies $G_\gamma(u)=0$, we have 
\[0 = \sup_{v \in K \cap Y} \left(L(u,v) - \frac{1}{2\gamma}\norm{u-v}{\X}^2\right) \geq \left(L(u,v) - \frac{1}{2\gamma}\norm{u-v}{\X}^2\right) \quad \forall v \in K \cap Y,\]
and with $\lambda \in (0,1)$, selecting $v= (1-\lambda) u + \lambda w$ where $w \in K\cap Y$ is arbitrary (by convexity, $v \in K \cap Y$ too),  manipulating and taking $\lambda \to 0$ we get 
\[\langle Au-f, u-w \rangle \leq 0 \quad \forall w \in K\cap Y.\]
Now we also know that \eqref{eq:VI} is uniquely solvable with $u^*$ as solution, thus
\[\langle Au^*-f, u^*-v \rangle \leq 0 \quad \forall v \in K.\]
Picking $w=u^*$ (which by assumption belongs to $Y$) and $v=u$ and using coercivity, we find $u=u^*$.

\item This follows by the continuity of $u \mapsto L_\gamma(u,v)$  from $H^1(\Omega)$ into $\mathbb{R}$ for fixed $v$; the argumentation is the same as in \citet[Lemma 3.1]{Migorski}.
\end{enumerate}
\end{proof}
Due to Lemma \ref{lem:regularisedGapVI} \ref{item:GgammaNonNeg} and \ref{item:uiffGgamma0}, to find the solution of \eqref{eq:VI}, we can solve
\begin{equation}
\min_{u \in K \cap X} G_\gamma(u) \equiv \min_{u \in K \cap X} \sup_{v \in K \cap Y} L_\gamma(u,v)\label{eq:prelim1}
\end{equation}
(cf. \eqref{eq:min_sup_problem}). Now, we would like to consider instead of the supremum above  the maximum. First observe that for all $u \in V$, $-L_\gamma(u,\cdot)$ is strongly convex (as mentioned above), continuous, and also proper thanks to the quadratic term. Since we also assumed that $K \cap Y$ is closed, then by standard optimisation theory the problem
\[\max_{v \in K \cap Y} L_\gamma(u,v)\]
has a unique solution. Thus instead of \eqref{eq:prelim1}, we can consider
\begin{equation} 
\min_{u \in K \cap X} G_\gamma(u) \equiv \min_{u \in K \cap X} \max_{v \in K \cap Y} L_\gamma(u,v).\label{eq:minMaxDiagonalProblem}
\end{equation}
A natural question now arises as to what we mean by a solution to this problem.
\begin{definition}\label{defn:minmax}
    Given a map $\mathfrak{f}\colon X \times Y \to \mathbb{R}$ (which need not be convex-concave), by a \emph{(global) solution} or a \emph{(global) minmax point} of the minmax problem 
    \begin{equation}
\min_{x \in X}\max_{y \in Y} \mathfrak{f}(x,y),\label{eq:f_minmax_problem}        
    \end{equation}
following \citet[Definition 9 and Remark 10]{Jordan}, we mean a pair $(x^*, y^*)$ that satisfies 
\[\mathfrak{f}(x^*, y) \leq \mathfrak{f}(x^*, y^*) \quad \forall y \in Y\quad\text{and} \quad \max_{y \in Y}\mathfrak{f}(x^*, y) \leq \max_{y \in Y} \mathfrak{f}(x,y) \quad \forall x \in X\]
or equivalently
\[\mathfrak{f}(x^*, y) \leq \mathfrak{f}(x^*, y^*) \leq \max_{\hat y \in Y}\mathfrak{f}(x,\hat y) \qquad \forall (x,y) \in X \times Y.\]
\end{definition}
Observe that saddle points are global minmax points. In the context of finite dimensional problems, existence of solutions to \eqref{eq:f_minmax_problem} holds if $X \subset \mathbb{R}^{d_1}$ and $Y \subset \mathbb{R}^{d_2}$ are compact and $\mathfrak{f}$ is continuous \citep[Proposition 11]{Jordan}.  See also Proposition 2.6 of \citet{JiangChen} (and the proceeding paragraph) for another existence result.

The next lemma is a crucial tool as it will give us an error estimate later on.
\begin{lemma}\label{lem:error_estimate_G_gamma}
\corr{With $u^* \in Y$ a solution of \eqref{eq:VI},} we have 
\begin{align*}
\left(C_a-\frac{1}{2\gamma}\right)\norm{u-u^*}{V}^2 + \langle Au^*-f, u-u^*\rangle \leq G_\gamma(u) \qquad \forall u \in V.
\end{align*}
If $u \in K$, the duality product can be omitted.
\end{lemma}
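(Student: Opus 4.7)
The plan is a short chain of inequalities, with the key idea being to use the exact solution $u^*$ itself as a test function in the supremum defining $G_\gamma$. By \cref{ass:on_spaces}, $u^* \in K \cap X \subset K \cap Y$, so it is an admissible competitor, and hence
\[
G_\gamma(u) \;\geq\; L_\gamma(u,u^*) \;=\; \langle Au - f,\, u - u^*\rangle \;-\; \frac{1}{2\gamma}\norm{u-u^*}{V}^2.
\]
This is the only nontrivial structural step; everything else is manipulation.

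Next I would split the linear term by inserting $\pm Au^*$:
\[
\langle Au-f,\, u-u^*\rangle \;=\; \langle Au - Au^*,\, u-u^*\rangle \;+\; \langle Au^* - f,\, u-u^*\rangle.
\]
Applying the coercivity bound \eqref{eq:conditions_on_A_2} to the first summand gives $\langle Au-Au^*, u-u^*\rangle \geq C_a \norm{u-u^*}{V}^2$, and combining with the previous display yields
\[
G_\gamma(u) \;\geq\; \Bigl(C_a - \tfrac{1}{2\gamma}\Bigr)\norm{u-u^*}{V}^2 \;+\; \langle Au^* - f,\, u - u^*\rangle,
\]
which is the first claimed inequality.

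For the second assertion, suppose additionally that $u \in K$. Since $u^*$ solves \eqref{eq:VI}, testing with $v = u \in K$ gives $\langle Au^* - f,\, u^* - u\rangle \leq 0$, i.e., $\langle Au^*-f,\, u - u^*\rangle \geq 0$. Dropping this non-negative term from the lower bound preserves the inequality, and we recover
\[
\Bigl(C_a - \tfrac{1}{2\gamma}\Bigr)\norm{u-u^*}{V}^2 \;\leq\; G_\gamma(u).
\]

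The only place where any care is needed is the first step, where one must invoke \cref{ass:on_spaces} to ensure $u^*$ is a legitimate test function in the definition of $G_\gamma$; without the condition $K \cap X \subset K \cap Y$ (or rather, just $u^* \in K \cap Y$) the argument would not go through in the generalised setting where $Y \neq V$. There is no real obstacle beyond that.
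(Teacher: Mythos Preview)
Your proof is correct and follows essentially the same approach as the paper: choose $v = u^*$ in the supremum defining $G_\gamma$, then split $\langle Au-f, u-u^*\rangle$ by inserting $\pm Au^*$ and apply coercivity. You even spell out the argument for the second claim (using the VI with test function $u \in K$), which the paper leaves implicit; note also that you only need $u^* \in K \cap Y$ from \cref{ass:on_spaces}(i), not the subset condition (ii).
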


\begin{proof}
For any $u \in V$ (not necessarily in $K \cap X$), since $u^* \in Y$, we have
\begin{align*}
G_\gamma(u) &= \max_{v \in K \cap Y} \langle Au-f, u-v \rangle - \frac{1}{2\gamma}\norm{u-v}{V}^2 \geq \langle Au-f, u-u^* \rangle - \frac{1}{2\gamma}\norm{u-u^*}{V}^2\\
&= \langle A(u-u^*) + Au^*-f, u-u^* \rangle - \frac{1}{2\gamma}\norm{u-u^*}{V}^2\\
&\geq \left(C_a-\frac{1}{2\gamma}\right)\norm{u-u^*}{V}^2 + \langle Au^*-f, u-u^*\rangle.
\end{align*}
\end{proof}
From now on we will assume that $\gamma$ is such that
\begin{equation}
    C_a - \frac{1}{2\gamma} > 0, \qquad \text{i.e.,} \qquad \gamma > \frac{1}{2C_a}.\label{eq:condition_on_gamma}
\end{equation}
\subsection{Relaxations of the problem}\label{sec:relaxations}
We now look at relaxing the minmax problem of interest \eqref{eq:prelim1} in ways that are more amenable to implementation, as optimising over a constraint set involving $K$ is non-trivial. Let us define the loss function $L_o\colon L^2(\Omega) \to \mathbb{R}$ corresponding to the 
obstacle constraint and the loss function $L_b\colon L^2(\partial\Omega) \to \mathbb{R}$ for the boundary condition by
\begin{equation}
L_{o}(u) = \int_\Omega |(\psi-u)^+|^2  \qquad \text{and}\qquad   L_b(u) =  \int_{\partial\Omega}|u-h|^2\label{eq:loss_functions_cts}    
\end{equation}
respectively. These measure violations of the constraints.

\paragraph{Imposition of the boundary conditions}

The idea is the following. We take a function $\bar h \in V$ with $\bar h|_{\partial\Omega}=h$ (i.e., $\bar h$ is a lift or extension of the Dirichlet data to the interior of the domain) and posit that the solution and test functions are of the form $\bar h + z$ for $z \in H^1_0(\Omega)$, and given penalty parameters   $w_{o_1}$ and $w_{o_2}$, we solve
    \begin{equation}
   \min_{u \in H^1_0(\Omega) + \bar h}\max_{v \in H^1_0(\Omega) + \bar h} L_\gamma (u,v)     + w_{o_1}L_o(u) - w_{o_2}L_o(v).\tag{$\text{P}_{\mathrm{bc}}$}\label{eq:cts_ansatz}
    \end{equation}
    That is, the boundary condition is satisfied exactly and  violations of the obstacle constraint are penalised. 

\paragraph{Full penalty approach}In this case we seek to optimise over the entire Hilbert space and penalise violations of \textit{both} the obstacle constraint and boundary condition. In contrast to the previous approach, the boundary condition is not enforced at the outset. This leads us to consider     for penalty parameters $w_{o_i}, w_{b_i}$ (for $i=1,2$) the problem
    \begin{equation}
        \min_{u \in \X}\max_{v \in \X} L_\gamma(u,v)   + w_{o_1}L_o(u) + w_{b_1}L_b(u) - w_{o_2}L_o(v)- w_{b_2}L_b(v). \tag{$\text{P}_{\mathrm{pen}}$}\label{eq:fullPenaltyLoss}        
    \end{equation}
\paragraph{Generalised approach}Clearly, both of these approaches can be viewed as special cases of a single more general problem by choosing the function spaces and the weights for the penalty terms correctly. It becomes convenient later to take this viewpoint. Define for $i=1,2$ the penalty functions $R_i\colon H^1(\Omega) \to \mathbb{R}$ by
\begin{align*}
R_1(u) :=  w_{b_1}L_b(u) +  w_{o_1}L_o(u) \qquad\text{and}\qquad R_2(v) = w_{b_2}L_b(v) + w_{o_2}L_o(v).\end{align*}
A generalised problem can be posed as
\begin{equation}
\min_{u \in X^{\mathrm{s}}}\max_{v \in X^{\mathrm{t}}} L_\gamma(u,v) + R_1(u) -  R_2(v)\tag{${\text{P}}_{\mathrm{gen}}$}\label{eq:proble_gen_cts}
\end{equation}
where $X^{\mathrm{s}}$ and $X^{\mathrm{t}}$ are sets. It is (a discrete version of) this problem that we shall provide an error analysis for inasmuch as it possible to do so, in order to allow for the greatest generality possible.

\section{Neural network approach}\label{sec:nnapproach}
We wish to compute (approximate) solutions of the VI \eqref{eq:VI} using neural networks. The architecture we use is essentially the residual neural network  considered in \citet{DeepRitz} consisting of the usual affine transformations and activations combined with skip connections, inspired by the original work of \citet{DeepResImaging}. Residual networks or ResNets have been empirically observed to be better at training deep networks and they avoid the vanishing gradient problem, see for example \citet{HeEtAll, WU2019119} for some analysis.

Let us describe this special ResNet architecture precisely. Let $\fd, \fw \in \mathbb{N}$ be given positive integers (representing the depth and width of the network respectively). Given an initial weight $A_0 \in \mathbb{R}^{\fw \times n}$ and bias $b_0 \in \mathbb{R}^{\fw}$, and for $i=1, \hdots, \fd$ and $j=1,2$,   weights $A_{ij} \in \mathbb{R}^{\fw \times \fw}$ and biases $b_{ij} \in \mathbb{R}^{\fw}$ determining the affine transforms
\[T_{ij}z := A_{ij}z + b_{ij}\quad\text{for $i=1, \hdots, \fd$ and $j=1,2$},\]
and a weight $A_{\fd+1} \in \mathbb{R}^{1 \times \fw}$ and bias $b_{\fd+1} \in \mathbb{R}$ for the final layer, define
\begin{align*}
    T_0(x) &:= A_0x + b_0,\\
    \mathfrak{B}_i(z) &:= \sigma \circ T_{i2} \circ \sigma \circ T_{i1}(z)  + z \quad \text{for $i= 1, \hdots, \fd$},\\
    T_{\fd+1}(z) &:= A_{\fd+1} z + b_{\fd+1}.
\end{align*}Note that each $\mathfrak{B}_i$ for $i = 1, \hdots, \fd$ has the so-called \emph{block} structure; each block comprises two affine transformations and two activations with a residual connection, and we have that $\mathfrak{B}_i \colon \mathbb{R}^\fw \to \mathbb{R}^\fw$. 

\definecolor{inputcol}{HTML}{AED6F1}
\definecolor{linecol} {HTML}{A9DFBF}
\definecolor{actcol}  {HTML}{F9E79F}
\definecolor{blockcol}{HTML}{EBF5FB}
\definecolor{arrowcol}{HTML}{2C3E50}
\definecolor{skipcol} {HTML}{C0392B}

\tikzset{
  bar/.style     = {draw, rounded corners=2pt,
                    minimum width=0.55cm, minimum height=2.2cm,
                    font=\scriptsize, align=center, inner sep=3pt},
  inbar/.style   = {bar, fill=inputcol, draw=blue!50!black},
  linbar/.style  = {bar, fill=linecol,  draw=green!40!black},
  actbar/.style  = {bar, fill=actcol,   draw=orange!70!black},
  plus/.style    = {circle, draw=gray!70, fill=white, thick,
                    minimum size=0.45cm, font=\small, inner sep=0pt},
  arr/.style     = {-{Stealth[length=5pt,width=4pt]}, thick, color=arrowcol},
  skip/.style    = {-{Stealth[length=5pt,width=4pt]}, thick,
                    color=skipcol, rounded corners=4pt},
  blkbox/.style  = {fill=blockcol, draw=blue!30, dashed,
                    rounded corners=5pt, inner sep=5pt},
  lbl/.style     = {font=\scriptsize},
}

\usetikzlibrary{positioning, arrows.meta, fit, backgrounds, calc}

\begin{figure}[!htb]
\centering
\begin{tikzpicture}[node distance=0.35cm]

\node[inbar, minimum height=0.85cm] (x) {};
\node[lbl, below=0.08cm of x] {$x$};

\node[linbar, right=0.7cm of x] (T0) {};
\node[lbl, below=0.08cm of T0] {$T_0$};

\node[linbar, right=0.8cm of T0] (T11) {};
\node[lbl, below=0.18cm of T11] {$T_{1,1}$};

\node[actbar, minimum height=0.55cm, right=of T11] (s11) {};
\node[lbl, below=0.18cm of s11] {$\sigma$};

\node[linbar, right=of s11] (T12) {};
\node[lbl, below=0.18cm of T12] {$T_{1,2}$};

\node[actbar, minimum height=0.55cm, right=of T12] (s12) {};
\node[lbl, below=0.18cm of s12] {$\sigma$};

\node[plus, right=0.35cm of s12] (p1) {$+$};

\def\skipheight{1.55}
\draw[skip]
  ($(T0.east)!0.5!(T11.west)$)
  -- ++(0,1.2*\skipheight) coordinate (skip1top)
  -| (p1.north);

\begin{pgfonlayer}{background}
\node[blkbox,
      fit=(T11)(s11)(T12)(s12)(p1)(skip1top),
      label={[font=\scriptsize\bfseries, color=blue!60!black]
             above:$\mathfrak{B}_1$}] {};
\end{pgfonlayer}

\node[font=\normalsize, right=0.55cm of p1] (dots) {$\cdots$};

\node[linbar, right=0.55cm of dots] (Td1) {};
\node[lbl, below=0.18cm of Td1] {$T_{d,1}$};

\node[actbar, minimum height=0.55cm, right=of Td1] (sd1) {};
\node[lbl, below=0.18cm of sd1] {$\sigma$};

\node[linbar, right=of sd1] (Td2) {};
\node[lbl, below=0.18cm of Td2] {$T_{d,2}$};

\node[actbar, minimum height=0.55cm, right=of Td2] (sd2) {};
\node[lbl, below=0.18cm of sd2] {$\sigma$};

\node[plus, right=0.35cm of sd2] (pd) {$+$};

\draw[skip]
  ($(dots.east)!0.5!(Td1.west)$)
  -- ++(0,1.2*\skipheight) coordinate (skipdtop)
  -| (pd.north);

\begin{pgfonlayer}{background}
\node[blkbox,
      fit=(Td1)(sd1)(Td2)(sd2)(pd)(skipdtop),
      label={[font=\scriptsize\bfseries, color=blue!60!black]
             above:$\mathfrak{B}_d$}] {};
\end{pgfonlayer}

\node[linbar, right=0.8cm of pd] (Tout) {};
\node[lbl, below=0.08cm of Tout] {$T_{d+1}$};

\node[inbar, minimum height=0.55cm, right=0.7cm of Tout] (u) {};
\node[lbl, below=0.08cm of u] {$u(x)$};

\foreach \a/\b in {
  x/T0, T0/T11, T11/s11, s11/T12, T12/s12, s12/p1,
  p1/dots, dots/Td1, Td1/sd1, sd1/Td2, Td2/sd2, sd2/pd,
  pd/Tout, Tout/u}
  \draw[arr] (\a) -- (\b);

\end{tikzpicture}
\caption{\corr{Sketch of the $\mathcal{F}_{\mathrm{DRR}}$ architecture given in \eqref{eq:NN}.
Red lines denote residual connections.}}
\label{fig:new_FDRR}
\end{figure}

With this notation at hand, the class of neural networks under consideration in this work consists of functions with the form
\begin{equation}
u \colon \mathbb{R}^n \to \mathbb{R}, \qquad u(x) = T_{\fd+1} \circ \mathfrak{B}_{\fd}  \circ \cdots \circ \mathfrak{B}_1 \circ T_0(x).\label{eq:NN} 
\end{equation}
The coefficients of the matrices and vectors appearing above determine the learnable parameters of the neural network. The neural network above has $\fd$ blocks and in total a depth of $2\fd+2$ layers (if we understand 
each block to have two layers; hence there are $2\fd$ hidden layers). We will write the set of all such neural networks of block depth $\fd$, width $\fw$ and activation function $\sigma$ as 
\[\mathcal{F}_{\mathrm{DRR}}(\fd,\fw,\sigma) := \{u \colon \mathbb{R}^n \to \mathbb{R} : u \text{ is given by \eqref{eq:NN}}\}\]
($\mathrm{DRR}$ for \textit{D}eep \textit{R}itz \textit{R}esidual neural network, from the title of the work by \citet{DeepRitz} from where this structure originated), \corr{see Figure \ref{fig:new_FDRR} for a diagram}. Elements of this set differ by different choices of the learnable parameters, i.e., the weights and biases. By denoting all the learnable parameters of the network \eqref{eq:NN} as 
\[\theta = \left\{(A_0, b_0), (A_{\fd+1}, b_{\fd+1}) \right\} \bigcup_{i=1}^{\fd} \left \{(A_{i1}, b_{i1}), (A_{i2}, b_{i2})\right\},\]
we can view any $u \in \mathcal{F}_{\mathrm{DRR}}(\fd,\fw,\sigma)$ as a function of the parameters, i.e.,  $u=u(\theta)$. The element $\theta$ belongs to the \emph{weight space} (or \emph{parameter space})
\begin{align*}
    \nonumber \Theta := &\Big\{\theta = \{(A_0, b_0), (A_\fd, b_\fd)\} \bigcup_{i=1}^{\fd} \{(A_{i1}, b_{i1}), (A_{i2}, b_{i2})\} \\
&\qquad\qquad : \quad (A_0, b_0) \in \mathbb{R}^{\fw \times n} \times \mathbb{R}^{\fw}, (A_{\fd+1}, b_{\fd+1}) \in \mathbb{R}^{1\times \fw} \times  \mathbb{R}, (A_{ij}, b_{ij}) \in \mathbb{R}^{\fw \times \fw} \times \mathbb{R}^{\fw}\Big\}.\end{align*}
In total, the network has
\[\mathfrak{m}:= n\fw + \fw + 2(\fw^2+\fw)\fd + \fw + 1 = 2\fd\fw^2 + (n+2\fd +2)\fw + 1\] 
parameters to be determined hence $\Theta \subset \mathbb{R}^\mathfrak{m}$ can be viewed as a subset of Euclidean space.
Note that the first layer is chosen so that the input $x \in \mathbb{R}^n$ is  transformed to a $\fw$-dimensional object (which is necessary to apply the block structure). In the case that $n < \fw$, we could alternatively have padded the input with zeros. 

The standard feedforward neural network structure ubiquitous in machine learning has a very similar architecture as described above except (more or less) without the addition of the $z$ term in the definition of $\mathfrak{B}$. Let us write this set of neural networks as $\mathcal{F}_{\mathrm{FFN}}(\fd, \fw, \sigma)$ where an element of this space has the structure
\[u\colon \mathbb{R}^n \to \mathbb{R}\qquad u(x) = T_{\fd+1} \circ \sigma  \circ T_{\fd} \circ \sigma \circ \cdots \circ T_1 \circ \sigma \circ T_0(x).\]
Note that $\cF_{\mathrm{DRR}}(\fd, \fw, \sigma)$ has $2\fd + 2$ layers while $\cF_{\mathrm{FFN}}(\fd, \fw, \sigma)$ has $\fd+2$ layers; this mismatch is of little consequence as it is just a notational choice.

For convenience, we write 
$\mathcal{F}$ or $\cF(\fd, \fw, \sigma)$
for a set of neural networks of arbitrary architecture (with depth $\fd$, width $\fw$ and activation $\sigma$; the depth and width are ambiguous), typically $\cF \in \{\cF_{\mathrm{DRR}}, \cF_{\mathrm{FFN}}\}$ is what we have in mind.

\paragraph{Neural networks with homogeneous BC.} Given a neural network class $\cF$, it is useful to consider another set of functions $\mathcal{F}_{0}$
that have the same type of architecture as $\cF$ and that are zero on the boundary. 

One way to practically construct such a set is by taking a function $\eta \in C^1(\overline{\Omega})$ satisfying $\eta|_{\partial\Omega} = 0,$ then given $v \in \mathcal{F}$, the function $v\eta \in \mathcal{F}_{0}$. That is, given $\tilde u \in \cF$ this amounts to appending an extra final layer
$M_{\eta} \circ \tilde u$
where $M_\eta\colon C^1(\overline{\Omega}) \to C^1(\overline{\Omega})$ defined by $M_\eta(\tilde u) := \tilde u \eta$ is the pointwise multiplication operator. Let us label this subset of $\cF_0$ with a subscript $\eta$, e.g., as 
\begin{equation}\label{eq:defnCf0Eta}
\cF_{0,\eta} := \{ M_\eta \circ \tilde u : u \in \cF \}.
\end{equation}
Thus in particular, 
$\mathcal{F}_{\mathrm{DRR}, 0, \eta}(\fd,\fw,\sigma) := \{u \colon \mathbb{R}^n \to \mathbb{R} : u(x) = M_\eta \circ T_{\fd+1} \circ \mathfrak{B}_{\fd}  \circ \cdots \circ \mathfrak{B}_1 \circ T_0(x) \}$. The function $\eta$ is easy to construct  when the domain is an interval or rectangle.  For more complex geometries it is non-trivial and the desired $C^1$ property may not hold; see e.g. \citet{SUKUMAR2022114333} for a way to construct such a function $\eta$ (with lower regularity).

\medskip 

\noindent \textbf{Neural networks with additional final layer.} We can generalise this structure to cater to a more general final layer that could be different to pointwise multiplication. Indeed, given a operator $M \colon C^1(\overline{\Omega}) \to C^1(\overline{\Omega})$, let us define 
\begin{equation}
\cF(\fd, \fw, \sigma, M) := \{ M \circ u : u \in \cF(\fd, \fw, \sigma)\}.\label{eq:defn_cF_general_M}
\end{equation}
Note that $\cF_{0,\eta}(\fd,\fw,\sigma) \equiv 
\cF(\fd, \fw, \sigma, M_\eta)$ in this notation. 

We finish with a simple but useful result which follows by calculus arguments, see Appendix \ref{app:proofs} for the proof.
\begin{lemma}\label{lem:NNisC1_general}
Let $u \in \cF_{\mathrm{DRR}}(\fw, \fd, \sigma, M) \cup \cF_{\mathrm{FFN}}(\fw, \fd, \sigma, M)$ where $\sigma \in C^1(\mathbb{R})$ and $M \colon C^1(\bar\Omega) \to C^1(\bar\Omega)$ is continuous.   
Then the map $\theta \mapsto u(\theta, \cdot)$ is continuous from $\mathbb{R}^{\mathfrak{m}}$ to $C^1(\overline{\Omega})$.
\end{lemma}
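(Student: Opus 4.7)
The strategy is to decompose the neural network into its elementary building blocks (initial affine map, per-block affine maps, per-block activations, skip-connection additions, final affine map, and the operator $M$) and argue that each block gives a map that is continuous in the joint sense: if the intermediate output at stage $k$ depends continuously on $\theta$ as an element of $C^1(\overline{\Omega},\mathbb{R}^{\fw})$, then so does the output at stage $k+1$. Since $C^1(\overline{\Omega},\mathbb{R}^{\fw})$ is a Banach space and composition of continuous maps is continuous, iterating through the layers gives the result.

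I would first dispense with the affine layers. For weights $(A,b)$ and an input function $g(\theta,\cdot) \in C^1(\overline{\Omega},\mathbb{R}^{\fw})$ depending continuously on $\theta$, the map $\theta \mapsto A(\theta) g(\theta,\cdot) + b(\theta)$ is continuous into $C^1(\overline{\Omega},\mathbb{R}^{\fw})$, because both the $C^0$-norm and its gradient depend bilinearly on $A$ and $g$, and boundedness on a neighborhood of any fixed $\theta_0$ gives the needed joint continuity via the triangle inequality. The same argument handles the final linear layer $T_{\fd+1}$, and the initial layer $T_0$ applied to the coordinate function $x$ is trivially continuous in $(A_0,b_0)$. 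Skip connections $z \mapsto (\,\cdot\,) + z$ and sums across blocks are continuous as addition in the Banach space $C^1(\overline{\Omega},\mathbb{R}^{\fw})$.

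The main obstacle is showing that pointwise composition with $\sigma$ is continuous as a map $C^1(\overline{\Omega}) \to C^1(\overline{\Omega})$. Here one uses the chain rule: for $f \in C^1(\overline{\Omega})$, the pointwise composition $\sigma \circ f$ is again in $C^1(\overline{\Omega})$ with derivative $(\sigma' \circ f)\,\nabla f$. Given a sequence $f_k \to f$ in $C^1(\overline{\Omega})$, the ranges $f_k(\overline{\Omega})$ are contained in a common compact subset of $\mathbb{R}$ (since $\overline{\Omega}$ is compact and the $f_k$ are uniformly bounded in $C^0$). On this compact set both $\sigma$ and $\sigma'$ are uniformly continuous, so $\|\sigma \circ f_k - \sigma \circ f\|_{C^0} \to 0$ and $\|\sigma' \circ f_k - \sigma' \circ f\|_{C^0} \to 0$. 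Combined with $\nabla f_k \to \nabla f$ in $C^0$ and joint continuity of pointwise multiplication $C^0 \times C^0 \to C^0$, this yields $\sigma \circ f_k \to \sigma \circ f$ in $C^1$. Applied componentwise to $\mathbb{R}^{\fw}$-valued functions, we get continuity of the activation step.

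Finally, an induction on the block index combines the above ingredients for either the $\cF_{\mathrm{DRR}}$ architecture (where each block is $\sigma \circ T_{i2} \circ \sigma \circ T_{i1} + \mathrm{Id}$) or the $\cF_{\mathrm{FFN}}$ architecture (a plain sequence $\sigma \circ T_i$), to conclude that $\theta \mapsto T_{\fd+1} \circ \mathfrak{B}_{\fd} \circ \cdots \circ \mathfrak{B}_1 \circ T_0(\cdot)$ is continuous from $\mathbb{R}^{\mathfrak{m}}$ to $C^1(\overline{\Omega})$. Postcomposing with $M$ and using the assumed continuity of $M \colon C^1(\overline{\Omega}) \to C^1(\overline{\Omega})$ gives the claim for networks in $\cF(\fd,\fw,\sigma,M)$.
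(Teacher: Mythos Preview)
Your proof is correct, but it takes a different route from the paper's. You work layer by layer, establishing that each building block (affine map, activation, skip connection) acts as a continuous operator on $C^1(\overline{\Omega},\mathbb{R}^{\fw})$ and then composing; the key technical step is the Nemytskii-type continuity of $f \mapsto \sigma \circ f$ on $C^1$, which you handle via uniform continuity of $\sigma$ and $\sigma'$ on the common compact range. The paper instead argues in one stroke that the full map $(\theta,x)\mapsto \tilde u(\theta,x)$ and its spatial gradient are jointly continuous on $\mathbb{R}^{\mathfrak{m}}\times\overline{\Omega}$, and then upgrades pointwise continuity in $\theta$ to $C^1(\overline{\Omega})$-continuity by noting that a convergent sequence $\theta_n\to\theta$ together with $\overline{\Omega}$ forms a compact set on which $\tilde u$ and $\nabla_x\tilde u$ are uniformly continuous. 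Your approach is more modular and makes the role of each architectural component explicit, which would transfer cleanly to other network designs; the paper's argument is shorter and avoids the layer-wise bookkeeping by pushing everything into a single compactness/uniform-continuity step.
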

Note that this result applies to elements of the space $\cF_{0,\eta}$ for both architectures.  It also implicitly tells us that if $\sigma \in C^1(\mathbb{R})$ and $M \in C^0(C^1(\bar\Omega), C^1(\bar\Omega))$, every function in $\cF_{\mathrm{DRR}}(\fw, \fd, \sigma, M)\cup \cF_{\mathrm{FFN}}(\fw, \fd, \sigma, M)$ is $C^1(\overline{\Omega})$ (which implies that \eqref{eq:minmax_ell} below is well defined) and hence in $H^1(\Omega)$, i.e.,
$\cF(\fw, \fd, \sigma, M) \subset C^1(\overline{\Omega}) \subset H^1(\Omega)$
for both architectures. Though $\mathrm{ReLU}$ is not $C^1$, it is well known that we still have $\mathcal{F}(\fw, \fd, \mathrm{ReLU}) \subset H^1(\Omega)$ for both architectures.

\subsection{Discretised problems}\label{sec:disc_problems}
We need discrete versions of the objective function appearing in \eqref{eq:minMaxDiagonalProblem}. We shall henceforth require the regularity
$f \in L^2(\Omega) \cap C^0(\Omega)$.
For simplicity \corr{of exposition}, we assume that  $A=-\Delta + k\mathrm{Id}$ for a constant $k \geq 0$, generating the pairing
\[\langle Au, u-v \rangle = \int_\Omega \grad u \cdot \grad (u-v) + ku(u-v)\]
  (other cases can be easily handled with appropriate modifications). Take a set of collocation points $\{x_i\}_{i=1}^N \subset \Omega$ and define $\hatL\colon C^1(\overline{\Omega}) \times C^1(\overline{\Omega}) \to \mathbb{R}$ by
\begin{align*}
\hatL(u,v) := \frac{|\Omega|}{N} \sum_{i=1}^N \nabla u(x_i) \cdot (\nabla u(x_i) - \nabla v(x_i) ) + k u(x_i)(u(x_i)-v(x_i)) - \frac{|\Omega|}{N}\sum_{i=1}^N f(x_i)(u(x_i) - v(x_i)).  
\end{align*}
This is the discrete version of $L$. 
Similarly, the discrete version of $L_\gamma$ is $\hatL_\gamma\colon C^1(\overline{\Omega}) \times C^1(\overline{\Omega}) \to \mathbb{R}$ given by 
\begin{align*}
    \hatL_\gamma(u,v) := \hatL(u,v) - \frac{|\Omega|}{2\gamma N} \sum_{i=1}^N (u(x_i) - v(x_i))^2 + |\grad u(x_i) - \grad v(x_i)|^2.
\end{align*}
\begin{remark}\label{rem:dependence_on_gridpoints}
    One should bear in mind that $\hatL$ and $\hatL_\gamma$ (and all other discretised quantities that we shall introduce) are also functions of the grid points, i.e., $\hatL(u,v) = \hatL^{\{x_i\}}(u,v)$. A different choice of $\{x_i\}$ gives a different $\hatL$. We usually omit this dependence for aesthetic reasons.
\end{remark}
Let $\cXs$ and $\cXt$ be arbitrary subsets of $C^1(\bar\Omega)$. Later, we will take these sets as sets of neural networks representing the solution and test function space respectively (with  potentially different widths, depths and activations, etc., cf. the Petrov--Galerkin method) of the form \eqref{eq:defn_cF_general_M}, intersected with a ball of a carefully chosen radius. A discretised problem corresponding to \eqref{eq:minMaxDiagonalProblem} is
\begin{equation}
    \min_{u \in \cXs} \max_{v \in \cXt} \; \hatL_\gamma(u,v). \tag{$\widehat{\text{P}}$}\label{eq:discrete_minmax}
\end{equation}
In general,  $\cXs$ and $\cXt$ need not be subsets of $K$ and it could be that they are not rich enough for the above problem to be a good approximation \corr{of \eqref{eq:minMaxDiagonalProblem}} (see Remark \ref{rem:exact_constraints} below). In view of this and the relaxations we surveyed in \S \ref{sec:relaxations}, let us consider some alternatives.

\subsubsection{Discrete version of \eqref{eq:cts_ansatz} (Imposition of the boundary conditions)}\label{sec:imposition_bcs}
     To formulate a discrete version of \eqref{eq:cts_ansatz}, let us first assume the regularity
     $\psi \in C^0(\Omega)$
and   
     define the discrete obstacle constraint loss corresponding to $L_o$ as
\[\hatL_o(u) := \frac{|\Omega|}{N} \sum_{i=1}^N |(\psi(x_i)-u(x_i))^+|^2.\]  
For a given function $\tilde h \in C^1(\overline{\Omega})$  satisfying $\tilde h|_{\partial\Omega} =h$  and  taking
\begin{equation*}
    \mathcal{F}^{\mathfrak{s}}_0 := \mathcal{F}_0(\fw^{\mathfrak{s}}, \fd^{\mathfrak{s}}, \sigma^{\mathfrak{s}}) \qquad\text{and}\qquad \mathcal{F}^{\mathfrak{t}}_0 := \mathcal{F}_0(\fw^{\mathfrak{t}}, \fd^{\mathfrak{t}}, \sigma^{\mathfrak{t}})\end{equation*}
(as mentioned, a practical way to realise these sets is to use $\cF_{0,\eta}$ as defined in \eqref{eq:defnCf0Eta}), we pose 
\begin{equation}
   \min_{u \in \mathcal{F}_0^{\mathfrak{s}} + \tilde h}\max_{v \in \mathcal{F}_0^{\mathfrak{t}}  + \tilde h} \hatL_\gamma(u,v) + w_{o_1}\hatL_o(u) - w_{o_2}\hatL_o(v).\tag{$\widehat{\text{P}}_{\mathrm{bc}}$}\label{eq:disc_ansatz}
\end{equation}

\begin{remark}
The problem \eqref{eq:disc_ansatz} can be formulated as
\begin{equation}
   \min_{\tilde u \in \mathcal{F}_0^{\mathfrak{s}}}\max_{\tilde v \in \mathcal{F}_0^{\mathfrak{t}}} \hatL_\gamma(\tilde u+\tilde h,\tilde v+\tilde h) + w_{o_1}\hatL_o(\tilde u+\tilde h) - w_{o_2}\hatL_o(\tilde v+\tilde h)\label{eq:disc_ansatz_2}.
\end{equation}
Then if $\tilde u$ denotes a solution of \eqref{eq:disc_ansatz_2}, to recover a solution of \eqref{eq:disc_ansatz} we set $u := \tilde u + \tilde h.$ 
\end{remark}

\subsubsection{Discrete version of \eqref{eq:fullPenaltyLoss} (Full penalty approach)}
Here we wish to formulate the approach of \eqref{eq:fullPenaltyLoss} in the discrete setting. 
In order to measure the boundary loss for the discretised problem, we introduce $\{x_i^b\}_{i=1}^{N_b} \subset \partial\Omega$ as a set of  boundary collocation points, and assuming
$\psi \in C^0(\Omega),\; h \in C^0(\partial\Omega),$
defining
\[\hatL_b(u) := \frac{|\partial \Omega|}{N_b} \sum_{i=1}^{N_b} (u(x_i^b)-h(x_i^b))^2,\]
we can pose 
\begin{equation}
    \min_{u \in \cFs(\fd^{\mathfrak{s}}, \fw^{\mathfrak{s}}, \sigma^{\mathfrak{s}})} \max_{v \in \cFt(\fd^{\mathfrak{t}}, \fw^{\mathfrak{t}}, \sigma^{\mathfrak{t}})} \; \hatL_\gamma(u,v) + w_{o_1}\hatL_o(u)+ w_{b_1}\hatL_b(u) - w_{o_2}\hatL_o(v) - w_{b_2} \hatL_b(v).\tag{$\widehat{\text{P}}_{\mathrm{pen}}$}\label{eq:discrete_minmax_general}
\end{equation}

\subsubsection{Discrete version of \eqref{eq:proble_gen_cts} (Generalised approach)}\label{sec:generalised_approach_disc}
Now let us address the generalised problem \eqref{eq:proble_gen_cts}.  Defining the discrete versions $\hat R_1$ and $\hat R_2$ of $R_1$ and $R_2$ (by using $\hat L_b$ and $\hat L_o$), we can consider
\begin{equation}
\min_{u \in \cXs}\max_{v \in \cXt} \hatL_\gamma(u,v) + \hat R_1(u) - \hat R_2(v)\tag{$\widehat{\text{P}}_{\mathrm{gen}}$}\label{eq:minmax_general_new}
\end{equation}
given sets of neural networks $\cXs$ and $\cXt$. 
This structure generalises all of the above-mentioned approaches \corr{\textit{and therefore we will perform the error analysis (in \S \ref{sec:error_analysis}) for this problem}}. We will denote by
\[\hatu \quad\text{and}\quad \hatuA\] 
an exact solution to \eqref{eq:minmax_general_new} and a computed solution to \eqref{eq:minmax_general_new} via a numerical algorithm\footnote{\corr{In our numerical implementation, we will use a gradient descent ascent algorithm (GDA) to solve our discrete problem, see Algorithm \ref{alg:1} on page~\pageref{alg:1}. GDA can be thought of as an inexact version of Uzawa's algorithm.}} $\mathcal{A}$, respectively.

\begin{remark}[Exact satisfaction of the constraints]\label{rem:exact_constraints}
In certain situations, it is possible to choose $\cXs$ and $\cXt$ such that both are subsets of $K$. This may be regarded as an ideal setting, since all elements are feasible and no additional effort is required to enforce the constraints. Moreover, the error analysis presented in \S \ref{sec:error_analysis}  simplifies considerably in this case.

A common scenario where $\cXs, \cXt \subset K$ can be easily achieved is when the VI has the homogeneous boundary condition $h \equiv 0$ (a typical setting in literature) and when, with only a small loss of generality\footnote{If the obstacle $\psi$ belongs to $H^1_0(\Omega)$ then one can always transform the VI to a VI with zero obstacle by $u \mapsto u-\psi$; this essentially moves the obstacle into the source term.}, the obstacle is also zero. Then we can solve
\begin{equation*}
    \min_{u \in \cFs(\fd^{\mathfrak{s}}, \fw^{\mathfrak{s}}, \sigma^{\mathfrak{s}}, Q \circ M_\eta)} \max_{v \in \cFt(\fd^{\mathfrak{t}}, \fw^{\mathfrak{t}}, \sigma^{\mathfrak{t}}, Q \circ M_\eta)}  \hatL_\gamma(u,v)
\end{equation*}
where $M_\eta$ is as before and $Q$ is the map $Q(u)=u^2$. Satisfying the constraints in a more general setting (with non-zero boundary conditions) appears to be non-trivial. We could consider a class of neural networks that satisfies also the obstacle condition in addition to the boundary condition. Take $u \in \mathcal{F}_{0,\eta}(\fd,\fw, \sigma)$, a function $\tilde h \in C^1(\overline{\Omega})$ with $\tilde h|_{\partial\Omega} = h$ (like described above) and make the transformation $u \mapsto \max(u+\tilde h, \psi).$ Label the set of such functions $\mathcal{F}^\psi_h(\fd,\fw, \sigma)$, which is a subset of $K$. One could then consider
\begin{equation*}
   \min_{u \in \mathcal{F}^\psi_h(\fd^{\mathfrak{s}},\fw^{\mathfrak{s}}, \sigma^{\mathfrak{s}})}\max_{v \in \mathcal{F}^\psi_h(\fd^{\mathfrak{t}},\fw^{\mathfrak{t}}, \sigma^{\mathfrak{t}})} \hatL_\gamma(u,v),
\end{equation*}
however, from experience, this does not work as well as expected due in part to the nonsmooth $\max$ operation which hinders training. 
\end{remark}

Before we proceed to the error analysis, let us briefly discuss existence of solutions to these neural network problems.
\subsubsection{Remarks on existence}\label{remark_challenges_nonconvex_nonconcave}
It is well known that sets of neural networks may not be closed (or convex), hampering the use of standard theory to deduce existence of optimal points for the discretised problems. One way to work around this issue is to use the notion of \emph{quasi-minimisation} \citep{Shin2023}, see also \citet{BREVIS2022115716}. An element $\bar u \in U \subset X$ of a set $U$ in a Hilbert space $X$ is said to be a \emph{quasi-minimiser} of the functional $J\colon U \to \mathbb{R}$ if it satisfies $J(\bar u) \leq \inf_{u \in U} J(u) + \epsilon$ for some $\epsilon > 0$. One could then weaken the notion of global minmax points (see Definition \ref{defn:minmax} with the corresponding function $\mathfrak{f}$) and define a \emph{quasi-minimax point} as a point $(x^*,y^*)$ that satisfies
\[\mathfrak{f}(x^*, y) - \epsilon \leq \mathfrak{f}(x^*, y^*) \quad \forall y \in Y\quad\text{and} \quad \max_{y \in Y}\mathfrak{f}(x^*, y) \leq \max_{y \in Y} \mathfrak{f}(x,y) + \epsilon \quad \forall x \in X\]
for some $\epsilon > 0$, and then study the associated theory. However, as this is not the focus of our work, let us content ourselves with giving existence results under a simplified setting where the associated parameter space to the sets of neural networks is sufficiently regular.

Let us consider a general problem
\begin{equation}
\min_{u \in \cXs}\max_{v \in \cXt} \hat\ell(u,v)\tag{$\widehat{\text{P}}_{\hat\ell}$}\label{eq:minmax_ell}
\end{equation}
under the following assumption.
\begin{assumption}[Assumptions on \eqref{eq:minmax_ell}]\label{ass:on_hat_ell}
Let $\hat \ell\colon C^1(\overline{\Omega}) \times C^1(\overline{\Omega}) \to \mathbb{R}$ be a given map and assume that $\cXs$ and $\cXt$ are of the form
  $$   \cXs \subset \cF(\fds, \fws, \sigmas, M^\mathrm{s}),\quad
     \cXt \subset \cF(\fdt, \fwt, \sigmat, M^\mathrm{t}),
  $$  
with associated  parameter spaces $\Theta^{\mathfrak{s}}$ and $\Theta^{\mathfrak{t}}$ respectively  and where $\sigma^{\mathfrak{s}}, \sigma^{\mathfrak{t}} \in C^1(\mathbb{R})$ and $M^{\mathrm{s}}, M^{\mathrm{t}}\colon C^1(\overline{\Omega}) \to C^1(\overline{\Omega})$ are given.  Furthermore, assume that
\begin{enumerate}[label=(\roman*)]\itemsep=0pt
    \item\label{item:ass_on_ell} $\hat \ell\colon C^1(\overline{\Omega}) \times C^1(\overline{\Omega}) \to \mathbb{R}$ is continuous,
    \item $M^{\mathrm{s}}, M^{\mathrm{t}} \in C^0(C^1(\overline{\Omega}), C^1(\overline{\Omega}))$,
\item $\Theta^{\mathfrak{s}}, \Theta^{\mathfrak{t}}$ are non-empty and compact.
\end{enumerate}
\end{assumption}
Here, we take the perspective that a \emph{solution} to problem \eqref{eq:minmax_ell} is associated with solving the finite-dimensional problem
\[\min_{\theta \in \Theta^{\mathfrak{s}}}\max_{\vartheta \in \Theta^{\mathfrak{t}}} \hat \ell(u(\theta, \cdot) ,v(\vartheta, \cdot)).\]
To be precise, in Assumption \ref{ass:on_hat_ell} we enforced conditions on the parameter spaces $\Theta^{\mathfrak{s}}$ and $\Theta^{\mathfrak{t}}$ associated to $\cXs$ and $\cXt$ which constrain the latter sets: elements of $\cXs$ and $\cXt$ have a limitation on how large the weights can be. 
\begin{proposition}\label{lem:existence_discrete_general}
Let Assumption \ref{ass:on_hat_ell} hold. Then there exists a solution $\hatu \in \cXs$  to the problem \eqref{eq:minmax_ell}.
\end{proposition}
\begin{proof}
As discussed above, we have by definition of our solution concept that
\[\min_{u \in \cXs}\max_{v \in \cXt} \hat \ell(u,v) = \min_{\theta \in \Theta^{\mathfrak{s}}}\max_{\vartheta \in \Theta^{\mathfrak{t}}} \hat \ell(u(\theta, \cdot) ,v(\vartheta, \cdot)).\]
Using Lemma \ref{lem:NNisC1_general}  and the continuity of $\hat\ell$, it follows that $(\theta, \vartheta) \to \hat \ell(u_\theta, v_\vartheta)$ is continuous. Applying \citet[Proposition 11]{Jordan}, we get the result.
\end{proof}
We now address \eqref{eq:disc_ansatz},  \eqref{eq:discrete_minmax_general} (note that the former is not a special case of the latter because the architectures are potentially different) \corr{and \eqref{eq:minmax_general_new}}.
\begin{corollary}\label{cor:existence_disc_problems}
We have the following.
\begin{enumerate}[label=(\roman*)]\itemsep=0cm
 \item\label{item:1} In the context of \eqref{eq:disc_ansatz}, let $\sigma^{\mathfrak{s}}, \sigma^{\mathfrak{t}} \in C^1(\mathbb{R})$ and let the parameter spaces $\Theta^{\mathfrak{s}}$ and $\Theta^{\mathfrak{t}}$ associated to $\cFs_0$ and $\cFt_0$ respectively be non-empty and compact. Then  the problem \eqref{eq:disc_ansatz}  possesses a solution.
\item\label{item:2} In the context of \eqref{eq:discrete_minmax_general}, let $\sigma^{\mathfrak{s}}, \sigma^{\mathfrak{t}} \in C^1(\mathbb{R})$ and let the parameter spaces $\Theta^{\mathfrak{s}}$ and $\Theta^{\mathfrak{t}}$ associated to $\cFs(\fd^{\mathfrak{s}}, \fw^{\mathfrak{s}}, \sigma^{\mathfrak{s}})$ and $\cFt(\fd^{\mathfrak{t}}, \fw^{\mathfrak{t}}, \sigma^{\mathfrak{t}})$ respectively be non-empty and compact. Then the problem \eqref{eq:discrete_minmax_general} possesses a solution.
\item\label{item:3} In the context of \corr{\eqref{eq:minmax_general_new}, let $(\theta, \vartheta) \mapsto (u(\theta, \cdot), v(\vartheta, \cdot))$ be continuous from the respective parameter spaces into $C^1(\overline{\Omega})\times C^1(\overline{\Omega})$ for all $u \in \cXs$ and $v \in \cXt$}  and let the parameter spaces $\Theta^{\mathfrak{s}}$ and $\Theta^{\mathfrak{t}}$ associated to $\cXs$ and $\cXt$ respectively be non-empty and compact. Then the problem  \corr{\eqref{eq:minmax_general_new}} possesses a solution.
\end{enumerate}
\end{corollary}
\begin{proof}
For \ref{item:1}, we consider the reformulation \eqref{eq:disc_ansatz_2} of \eqref{eq:disc_ansatz}. Thanks to Proposition \ref{lem:existence_discrete_general}, it suffices to show that $M_\eta \in C^0(C^1(\overline{\Omega}), C^1(\overline{\Omega}))$ and that the resulting loss function is continuous from $C^1(\overline{\Omega})\times C^1(\overline{\Omega})$ to $\mathbb{R}$. The former is clear because $\eta \in C^1(\overline{\Omega})$. For the latter, we begin by setting $\tilde h=0$ for  simplicity. Define the pointwise evaluations $S_i(u) := u(x_i)$ and $T_i(u) := \grad u(x_i)$. Then we can  write
\begin{align*}
    \hatL(u,v) = \frac{|\Omega|}{N}\sum_{i=1}^N (T_i(u), T_i(u) - T_i(v))_{\mathbb{R}^n} + \frac{|\Omega|}{N}\sum_{i=1}^N  kS_i(u)(S_i(u)-S_i(v)) - \frac{|\Omega|}{N}\sum_{i=1}^N f(x_i)(S_i(u) - S_i(v)).
\end{align*}
As $S_i\colon C^1(\overline{\Omega}) \to \mathbb{R}$ and $T_i \colon C^1(\overline{\Omega}) \to \mathbb{R}^n$ are clearly continuous, the composition of continuous maps being continuous gives continuity of $\hatL\colon C^1(\overline{\Omega}) \times C^1(\overline{\Omega}) \to \mathbb{R}$. Writing also
\begin{align*}
\hatL_{o}(u) = \frac{|\Omega|}{N}\sum_{i=1}^N |(S_i(\psi)-S_i(u))^+|^2,
\end{align*}
the continuity of the functions $(\cdot)^+, |\cdot|^2 \colon \mathbb{R} \to \mathbb{R}$ yields continuity of this object too. The remaining terms in the objective function in \eqref{eq:disc_ansatz_2} can be tackled similarly. The claim \ref{item:2} follows similarly,  \corr{whereas \ref{item:3} follow as in the proof of Proposition \ref{lem:existence_discrete_general}, wherein the evocation of Lemma \ref{lem:NNisC1_general} is unnecessary as its result is directly assumed.}
\end{proof}
It is clear that existence for the problem \eqref{eq:discrete_minmax} can also be handled with a similar argument like above.
 
Note that when transiting from the continuous problem \eqref{eq:prelim1} to any of the associated discrete problems above, the (strong) convexity and concavity properties for the former are lost. This is due to the structure of the mapping from the neural network weights to the neural network output $(\theta_1,\theta_2) \mapsto (u_{\theta_1}, v_{\theta_2})(x)$. As a consequence, the discrete problems are non-convex in the $\theta_1$ variable and non-concave in the $\theta_2$ variable which imposes major difficulties concerning the characterisation and efficient computation of solutions. For an overview in the smooth setting we refer to the recent works \citet{Jordan,razaviyayn2020nonconvex} and the references therein. Clearly, the design of tailored numerical solvers for such neural network based problems is a very important research task which, however, goes beyond the scope of our present work. Subsequently, we  focus on a comprehensive error analysis regarding the true and a computed solution for \eqref{eq:VI}. 
 
\subsection{Error analysis}\label{sec:error_analysis}
Recall that we consider the general problem \eqref{eq:minmax_general_new}, as described in \S \ref{sec:generalised_approach_disc}. As mentioned before, by choosing the sets $\cXs$ and $\cXt$ and the weights appearing in $R_1$ and $R_2$ appropriately, we can cover both cases \eqref{eq:disc_ansatz} and \eqref{eq:discrete_minmax_general} in this formulation.
 From the error estimate in Lemma \ref{lem:error_estimate_G_gamma} and the minimisation property Lemma \ref{lem:regularisedGapVI} \ref{item:uiffGgamma0}, 
\begin{align}
    \left( C_a - \frac{1}{2\gamma} \right)\| \hatuA - u^* \|_{\X}^2 &\leq  G_\gamma(\hatuA) - 
     G_\gamma(u^*) - \langle Au^*-f, u^*-\hatuA\rangle, \label{eq:error_upper_estimate}
\end{align} 
so we need to focus on estimating the right-hand side in an appropriate way. 
\corr{\corr{Associated to \eqref{eq:minmax_general_new},} define $\hatG_\gamma\colon \cXs \to \mathbb{R}$ by
\begin{equation}
\hatG_\gamma(u) := \corr{\sup_{v \in \cXt}} \hatL_\gamma(u,v) + \hat  R_1(u) -\hat R_2(v).\label{eq:defn_hatG_gamma_general}
\end{equation}
This plays the role of a discrete version of the gap function associated to \eqref{eq:minmax_general_new}.  \corr{(Existence of a \textit{maximiser} to \eqref{eq:defn_hatG_gamma_general} can be shown under appropriate assumptions in a similar way to Proposition \ref{lem:existence_discrete_general}, utilising the continuity of $\hat L_\gamma + \hat R_1 - \hat R_2$).} Note that if $\hatL_\gamma(u,u) + \hat  R_1(u) -\hat R_2(u) \geq 0$ and $\cXs \subseteq \cXt$, then it is easy to see that $\hatG_\gamma(u) \geq 0$ for all $u \in \cXs$. Without imposing Assumption \ref{ass:on_hat_ell}, we may still deduce that $\hatG_\gamma(\cdot) $ is finite everywhere by exploiting the fact that, by construction, $\cXt \subset V$.
\begin{remark}
Suppose that $\cFs_0 \subseteq \cFt_0$  and $w_{o_1}=w_{o_2}=0$. If $u \in \cFs_0$ satisfies
    \begin{equation*}
    \frac{|\Omega|}{N} \sum_{i=1}^N \nabla u(x_i) \cdot (\nabla u(x_i) - \nabla v(x_i) ) - \frac{|\Omega|}{N}\sum_{i=1}^N f(x_i)(u(x_i) - v(x_i)) \leq 0 \quad \forall v \in \cFt_0,  \end{equation*}    
    then $\hatG_\gamma(u) = 0.$ That is, solutions of this VI solve the discrete  problem \eqref{eq:disc_ansatz}.
\end{remark}}
Now let us recall (from \S \ref{sec:generalised_approach_disc}) that we denote by $\hatu$ a solution of \eqref{eq:minmax_general_new},  so that
\begin{equation}\label{mh:24*}
\hatG_\gamma (\hatu) \leq \hatG_\gamma(u) \quad \forall u \in \cXs.
\end{equation}
Define $H_\gamma\colon V \to \mathbb{R}$ by
\[H_\gamma(u) :=\corr{\sup_{v \in \cXt}}L_\gamma(u,v) + R_1(u) - R_2(v),\]
which can be thought of as the continuous version of $\hatG_\gamma$. Using the same argument as above, $H_\gamma(\cdot)$ is finite everywhere.
We begin with the following decomposition of the right-hand side of \eqref{eq:error_upper_estimate}: for arbitrary $\bar{u} \in \cXs$, we have
\begin{align*}
G_\gamma(\hatuA) - 
     G_\gamma(u^*)  &\leq \underbrace{G_\gamma(\hatuA) -    H_\gamma(\hatuA)}_{I}  + \underbrace{H_\gamma(\hatuA) - \hatG_\gamma(\hatuA) + \hatG_\gamma(\bar u) - H_\gamma (\bar u)}_{II}\\
     &\quad + \underbrace{\hatG_\gamma(\hatuA ) -       \hatG_\gamma(\hatu)}_{III} + \underbrace{H_\gamma (\bar u) - H_\gamma(u^*)}_{IV} + \underbrace{H_\gamma(u^*) - G_\gamma(u^*)}_{V}
\end{align*}
where we used $\hatG_\gamma(\hatu)\leq\hatG_\gamma(\bar u)$ by \eqref{mh:24*}.
We need the following two lemmas, wherein $\mathfrak{R}\colon V^*\to V$ is the Riesz map.

\begin{lemma}\label{lem:useful_estimate_NEW}
    For all $u, v \in \X$,
        \begin{equation}
L_\gamma(u,z)-L_\gamma(u,v)      =  \frac{1}{2\gamma}\norm{v-z}{V}^2 + \left(\mathfrak{R}(Au-f) + \frac{z-u}{\gamma}, v-z\right)_V.\label{eq:useful_estimate_NEW}
    \end{equation} 
\end{lemma}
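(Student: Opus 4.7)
The plan is to prove \eqref{eq:useful_estimate_NEW} by direct computation starting from the definition $L_\gamma(u,v) = \langle Au - f, u-v\rangle - \frac{1}{2\gamma}\|u-v\|_V^2$. Subtracting, the duality-pairing part collapses to $\langle Au-f, v-z\rangle$, which by definition of the Riesz map $\mathfrak{R}\colon V^* \to V$ equals $(\mathfrak{R}(Au-f), v-z)_V$. That already produces one of the two terms on the right of \eqref{eq:useful_estimate_NEW}, so the remaining work is algebraic manipulation of the quadratic contribution
\[
-\frac{1}{2\gamma}\bigl(\|u-z\|_V^2 - \|u-v\|_V^2\bigr).
\]

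The second step is to rewrite this quadratic difference using the polarization-type identity $\|a\|_V^2 - \|b\|_V^2 = (a-b, a+b)_V$ with $a = u-z$ and $b = u-v$. This yields $(v-z,\, 2u-z-v)_V$. I would then split $2u-z-v = -(v-z) + 2(u-z) \cdot \ldots$; more cleanly, write $2u - z - v = -(v-z) - 2(z-u)$, so that
\[
-\frac{1}{2\gamma}(v-z,\, 2u-z-v)_V = \frac{1}{2\gamma}\|v-z\|_V^2 + \frac{1}{\gamma}(z-u,\, v-z)_V.
\]
The second summand combines with the Riesz term to give exactly $\left(\mathfrak{R}(Au-f) + \tfrac{z-u}{\gamma},\, v-z\right)_V$, proving \eqref{eq:useful_estimate_NEW}.

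There is really no obstacle here beyond sign bookkeeping; the identity is essentially the statement that the map $v \mapsto L_\gamma(u,v)$ is a concave quadratic whose gradient at $v=z$ is $-\mathfrak{R}(Au-f) - (z-u)/\gamma$, and \eqref{eq:useful_estimate_NEW} is its exact Taylor expansion around $z$. The only care point is tracking the minus sign on the $\frac{1}{2\gamma}\|\cdot\|^2$ regularisation, and then using that $\|v-z\|_V^2$ is the correct positive-definite remainder (matching the $+\tfrac{1}{2\gamma}\|v-z\|_V^2$ on the right). No additional hypothesis beyond $u,v,z \in V$ is needed.
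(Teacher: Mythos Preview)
Your proof is correct and essentially identical to the paper's: both reduce the duality term to $(\mathfrak{R}(Au-f),v-z)_V$ and then rewrite the quadratic difference $\|u-v\|_V^2-\|u-z\|_V^2$ algebraically. The paper quotes the three-point identity $\|v-u\|^2-\|z-u\|^2=\|v-z\|^2+2(z-u,v-z)$ directly, whereas you obtain the same relation via the polarization identity $\|a\|^2-\|b\|^2=(a-b,a+b)$; these are equivalent one-line manipulations.
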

\begin{proof}
    This follows easily from the definition of $L_\gamma$     and the three-point identity     
\[\norm{v-u}{}^2 - \norm{z-u}{}^2  =   \norm{v-z}{}^2 + 2(z-u, v-z).\]
    \end{proof}
\begin{lemma}For all $u,v \in H^1(\Omega)$, we have for $i=1,2$ the estimate
\begin{align}
\nonumber R_i(u)-R_i(v) &\leq 2w_{b_i}(\norm{h}{L^2(\partial\Omega)}+\norm{u}{L^2(\partial\Omega)})\norm{u-v}{L^2(\partial\Omega)}\\
&\quad + w_{o_i}(\norm{u+v}{L^2(\Omega)}+2\norm{\psi}{L^2(\Omega)})\norm{u-v}{L^2(\Omega)} - w_{b_i}\norm{u-v}{L^2(\partial\Omega)}^2.\label{eq:penalty_estimate}
\end{align}
\end{lemma}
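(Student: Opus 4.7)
The plan is to split $R_i(u) - R_i(v)$ into its boundary and obstacle contributions,
\[
R_i(u)-R_i(v) \;=\; w_{b_i}\bigl(L_b(u)-L_b(v)\bigr) + w_{o_i}\bigl(L_o(u)-L_o(v)\bigr),
\]
and to estimate the two differences separately before reassembling. The $w_{b_i}$ and $w_{o_i}$ weights can be carried as inert scalars throughout, so the core work is in estimating each of $L_b(u)-L_b(v)$ and $L_o(u)-L_o(v)$.

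For the boundary piece I would use the Hilbert-space identity $\|a\|^2 - \|b\|^2 = 2(a-b,a) - \|a-b\|^2$ (a direct consequence of the three-point identity already invoked in \cref{lem:useful_estimate_NEW}) in $L^2(\partial\Omega)$ with $a = u-h$ and $b = v-h$. This gives
\[
L_b(u) - L_b(v) \;=\; 2(u-v,\,u-h)_{L^2(\partial\Omega)} - \|u-v\|_{L^2(\partial\Omega)}^2,
\]
after which Cauchy--Schwarz on the inner product together with $\|u-h\|_{L^2(\partial\Omega)} \leq \|u\|_{L^2(\partial\Omega)} + \|h\|_{L^2(\partial\Omega)}$ produces exactly the first bracketed summand on the right-hand side of \eqref{eq:penalty_estimate}, and the $-w_{b_i}\|u-v\|_{L^2(\partial\Omega)}^2$ term descends unaltered from the identity.

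For the obstacle piece the key tools are the pointwise factorisation $(a^+)^2 - (b^+)^2 = (a^+ - b^+)(a^+ + b^+)$ together with the $1$-Lipschitz property $|a^+ - b^+| \leq |a-b|$ of the positive part. Applied with $a = \psi - u$, $b = \psi - v$ and integrated over $\Omega$, these bounds yield
\[
L_o(u) - L_o(v) \;\leq\; \int_\Omega |u-v|\bigl((\psi-u)^+ + (\psi-v)^+\bigr),
\]
so that a Cauchy--Schwarz step reduces the problem to controlling $\|(\psi-u)^+ + (\psi-v)^+\|_{L^2(\Omega)}$. Here I would use the representation $t^+ = \tfrac12(t+|t|)$ to write $(\psi-u)^+ + (\psi-v)^+ = \tfrac12\bigl(2\psi - u - v + |\psi-u| + |\psi-v|\bigr)$, so that the triangle inequality applied to the \emph{linear} part $2\psi-u-v$ brings out $\|u+v\|_{L^2(\Omega)} + 2\|\psi\|_{L^2(\Omega)}$ rather than the weaker combination $\|u\|_{L^2(\Omega)} + \|v\|_{L^2(\Omega)} + 2\|\psi\|_{L^2(\Omega)}$ that would arise from estimating each positive part in isolation.

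The boundary estimate is essentially bookkeeping around a single expansion of a square and is routine. The main obstacle is the obstacle estimate: extracting the sharp combination $\|u+v\|_{L^2(\Omega)} + 2\|\psi\|_{L^2(\Omega)}$ (instead of the cruder $\|u\|_{L^2(\Omega)} + \|v\|_{L^2(\Omega)} + 2\|\psi\|_{L^2(\Omega)}$) requires exploiting the cancellation present in $2\psi - u - v$ \emph{before} invoking the triangle inequality, rather than after. Once both contributions are in hand, multiplying by the respective weights $w_{b_i}$ and $w_{o_i}$ and summing delivers \eqref{eq:penalty_estimate}.
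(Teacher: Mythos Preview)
Your treatment of the boundary term matches the paper's and is correct.

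The gap is in the obstacle term. Your decomposition $(\psi-u)^+ + (\psi-v)^+ = \tfrac12\bigl(2\psi - u - v + |\psi-u| + |\psi-v|\bigr)$ is correct, but it does \emph{not} lead to the bound $\|(\psi-u)^+ + (\psi-v)^+\|_{L^2(\Omega)} \le \|u+v\|_{L^2(\Omega)} + 2\|\psi\|_{L^2(\Omega)}$. After taking norms you still have to control the absolute-value piece $\tfrac12\bigl(|\psi-u|+|\psi-v|\bigr)$, and that contributes an additional $\|\psi\|+\tfrac12(\|u\|+\|v\|)$; there is no cancellation between it and the ``linear part''. In fact the inequality you (and the lemma) are aiming for is false: on a domain of unit measure take $\psi=0$, $u\equiv -1$, $v\equiv 1$, so that $(\psi-u)^+ +(\psi-v)^+ = 1$, while $\|u+v\|_{L^2}+2\|\psi\|_{L^2}=0$. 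With $w_{b_i}=0$ this gives $R_i(u)-R_i(v)=w_{o_i}>0$ against a right-hand side equal to $0$.

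The paper's own proof is equally silent at this step; the statement as printed appears to carry a typo, and the attainable constant is precisely the ``cruder'' combination $\|u\|_{L^2(\Omega)}+\|v\|_{L^2(\Omega)}+2\|\psi\|_{L^2(\Omega)}$ that you dismiss. Your argument, stopped at $\int_\Omega |u-v|\bigl((\psi-u)^+ + (\psi-v)^+\bigr)$ followed by the elementary pointwise bound $(\psi-w)^+ \le |\psi|+|w|$, delivers exactly this, and that version suffices for every later use of the lemma (the subsequent propositions only ever bound $\|u\|$ and $\|v\|$ separately by $M^{\mathrm s}$, $M^{\mathrm t}$, $r$, etc.).
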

\begin{proof}
Let us just set $i=1$. We start with
\begin{align*}
R_1(u)  - R_1(v)  &=  w_{b_1}\int_{\partial\Omega} (u - h)^2 - (v-h)^2 + w_{o_1}\int_\Omega |(\psi-u)^+|^2 - |(\psi-v)^+|^2.
\end{align*}
By straightforward computations, we estimate  
\begin{align*}
\int_{\partial\Omega} (u - h)^2 - (v-h)^2 &= \int_{\partial\Omega}2(u -h)(u - v) - |u-v|^2
\leq 2(\norm{h}{L^2(\partial\Omega)}+\norm{u}{L^2(\partial\Omega)})\norm{u-v}{L^2(\partial\Omega)}\\
&\quad- \norm{u-v}{L^2(\partial\Omega)}^2,
\end{align*}
while for the second integral, using the fact that $(\cdot)^+$ is Lipschitz, we get
\begin{align*}
\int_\Omega |(u - \psi)^+|^2 - |(v-\psi)^+|^2 
&\leq \int_\Omega |u-v||(\psi-u)^+ + (\psi-v)^+)|\\
&\leq (\norm{u+v}{L^2(\Omega)}+2\norm{\psi}{L^2(\Omega)})\norm{u-v}{L^2(\Omega)}.
\end{align*}
\end{proof}

From now on we assume
\begin{equation}\label{eq:uniform_bound_NN_spaces}
\begin{aligned}
\exists M^{\mathrm{s}}, B^{\mathrm{s}} > 0 &: \quad \norm{u}{V} \leq M^{\mathrm{s}}  \quad\text{and}\quad \norm{u}{L^2(\partial\Omega)} \leq B^{\mathrm{s}} \quad\forall u \in \cXs,\\
\exists M^{\mathrm{t}}, B^{\mathrm{t}} > 0 &: \quad \norm{u}{V} \leq M^{\mathrm{t}} \quad\text{and}\quad \norm{u}{L^2(\partial\Omega)} \leq B^{\mathrm{t}} \quad\forall u \in \cXt.
\end{aligned}    
\end{equation}
\corr{In the sequel, we will make use of the  assumption \eqref{eq:conditions_on_A_1} on the boundedness of $A$.}
\begin{proposition}[Estimate on I]
We have
\begin{align*}
G_\gamma(\hatuA) -    H_\gamma(\hatuA) 
&\leq \max_{v \in K \cap Y}\corr{\inf_{w \in \cXt}}  \Big( K_1\norm{w-v}{V} + K_2\norm{w-v}{L^2(\Omega)} + K_3\norm{w-v}{L^2(\partial\Omega)} \\
&\quad\quad\quad\quad\quad\quad\quad- w_{b_2}\norm{w-v}{L^2(\partial\Omega)}^2 - \frac{1}{2\gamma}\norm{w-v}{V}^2 \Big)- R_1(\hatuA)  
\end{align*}
where
$$
K_1 := C_bM^{\mathrm{s}} + \norm{f}{V^*} + \frac{1}{\gamma}(M^{\mathrm{s}} + M^{\mathrm{t}}), \quad K_2 := w_{o_2}(r+M^{\mathrm{t}}+2\norm{\psi}{L^2(\Omega)}),\quad K_3 := 2w_{b_2}(\norm{h}{L^2(\partial\Omega)}+B^{\mathrm{t}}).
$$
\end{proposition}
\begin{proof}
We begin with
\begin{align*}
G_\gamma(\hatuA) -    H_\gamma(\hatuA) 
&= -R_1(\hatuA) + \max_{v \in K \cap Y} L_\gamma(\hatuA, v) - \corr{\sup_{w \in \cXt}}\left(L_\gamma(\hatuA, w) - R_2(w)\right)\\
&=- R_1(\hatuA) + \max_{v \in K \cap Y}\corr{\inf_{w \in \cXt}}\left( L_\gamma(\hatuA, v) -L_\gamma(\hatuA, w) + R_2(w)\right).
\end{align*}
Now from \eqref{eq:useful_estimate_NEW}, we can estimate the middle two terms as
\begin{align*}
 L_\gamma(\hatuA, v) -L_\gamma(\hatuA, w) 
&=   \frac{1}{2\gamma}\norm{w-v}{}^2 + (\mathfrak{R}(A\hatuA-f) + \frac{v-\hatuA}{\gamma}, w-v)_V  \\
&\leq   \frac{1}{2\gamma}\norm{w-v}{}^2 + (\norm{A\hatuA-f}{V^*} + \frac{1}{\gamma}\norm{w-\hatuA}{V})\norm{w-v}{V}  - \frac{1}{\gamma}\norm{w-v}{V}^2 \\
&\leq   -\frac{1}{2\gamma}\norm{w-v}{}^2 + (C_b\norm{\hatuA}{V} + \norm{f}{V^*} + \frac{1}{\gamma}(M^{\mathrm{t}}+\norm{\hatuA}{V}))\norm{w-v}{V},
\end{align*}
\corr{where we used \eqref{eq:conditions_on_A_1} and \eqref{eq:uniform_bound_NN_spaces}.} Plugging this in, writing $R_2(w)=R_2(w)-R_2(v)$  and using the estimate on $R_2$ from \eqref{eq:penalty_estimate}, we get the result.
\end{proof}
\begin{proposition}[Estimate on II]
For all $\bar u \in \cXs$, we have
\begin{align*}
H_\gamma(\hatuA) - \hatG_\gamma(\hatuA)  + \hatG_\gamma(\bar u) - H_\gamma (\bar u) 
&\leq 2\sup_{\substack{u \in \cXs\\ v \in \cXt}} |L_\gamma(u, v) + R_1(u) - R_2(v) - (\hatL_\gamma(u, v) + \hat R_1(u) - \hat R_2(v))|.
\end{align*} 
\end{proposition}
\begin{proof}
\corr{
Define $F(u,v) = L_\gamma(u,v) + R_1(u) - R_2(v)$ and $\hat F(u,v) = \hat L_\gamma(u,v) + \hat R_1(u) - \hat R_2(v)$. Then by definition,
\begin{align*}
H_\gamma(\hatuA) - \hatG_\gamma(\hatuA)  + \hatG_\gamma(\bar u) - H_\gamma (\bar u) &= \sup_{v \in \cXt} F(\hatuA,v) - \sup_{v \in \cXt} \hat F(\hatuA, v) + \sup_{v \in \cXt} \hat F(\bar u, v) - \sup_{v \in \cXt} F(\bar u,v)\\
&\leq \sup_{v \in \cXt} |F(\hatuA,v) -  \hat F(\hatuA, v)| + \sup_{v \in \cXt} |\hat F(\bar u, v) -  F(\bar u,v)|\\
&\leq 2\sup_{\substack{u \in \cXs\\ v \in \cXt}} |F(u,v) -  \hat F(u, v)|
\end{align*}
which is precisely the desired result.
}

\end{proof}
Now for the fourth difference the following estimate holds true.
\begin{proposition}[Estimate on IV]
For all $\bar u \in \cXs$, we have
\begin{align*}
H_\gamma (\bar u) - H_\gamma(u^*) 
&\leq  K_4\norm{\bar u - u^*}{V}   + K_5\norm{\bar u-u^*}{L^2(\Omega)} + K_6\norm{\bar u - u^*}{L^2(\partial\Omega)} -w_{b_1}\norm{\bar u - u^*}{L^2(\partial\Omega)}^2\\
&\quad  - \frac{1}{2\gamma}\norm{u^*-\bar u}{V}^2
\end{align*}
where, \corr{with $C_1 := \norm{Au^*-f}{V^*}$,}
\begin{align*}
K_4 &:= C_b(M^{\mathrm{s}} + M^{\mathrm{t}}) + C_1 + \frac{1}{\gamma}(\norm{u^*}{V}+M^{\mathrm{t}}), \quad  
K_5 := w_{o_1}(M^{\mathrm{s}}+\norm{u^*}{V} + 2\norm{\psi}{L^2(\Omega)}),\\
K_6 &:= w_{b_1}2(\norm{h}{L^2(\partial\Omega)}+B^{\mathrm{s}}).
\end{align*}
\end{proposition}
\begin{proof}
The left-hand side satisfies
\begin{align*}
H_\gamma (\bar u) - H_\gamma(u^*) &\leq \corr{\sup_{v \in \cXt}} \left(L_\gamma(\bar u, v) + R_1(\bar u)  - L_\gamma(u^*,v) - R_1(u^*)\right).
\end{align*}
We have
\begin{align*}
 L_\gamma(\bar u, v) - L_\gamma(u^*,v) &= L(\bar u,v) - L(u^*,v) + \frac{1}{2\gamma}\left(\norm{u^*-v}{V}^2 - \norm{\bar u - v}{V}^2\right).
\end{align*}
Firstly, let us estimate the first two terms on the right-hand side:
\begin{align*}
    L(\bar{u}, v) - L(u^*, v) &= \langle A\bar u -f, \bar u - v \rangle - \langle Au^*-f, u^*-v \rangle  = \langle A(\bar u-u^*), \bar u - v \rangle  + \langle Au^* -f, \bar u - u^* \rangle     \\    
   &\leq C_b(M^{\mathrm{s}}+M^{\mathrm{t}})\norm{\bar u - u^*}{V} + C_1\norm{\bar u - u^*}{V}.
\end{align*}
\corr{Here, we set $C_1 := \norm{Au^*-f}{V^*}$ and utilised \eqref{eq:conditions_on_A_1} and \eqref{eq:uniform_bound_NN_spaces}.} Now for the squared norm terms, we again use the three-point equality to obtain
\begin{align*}
    \norm{u^*-v}{V}^2 - \norm{\bar u - v}{V}^2 = 2(u^* - v, u^* - \bar u) - \norm{u^*-\bar u}{V}^2
        \leq 2(\norm{u^*}{V}+M^{\mathrm{t}})\norm{u^* - \bar u}{V} - \norm{u^*-\bar u}{V}^2.
\end{align*}
Putting everything together and using again the estimate \eqref{eq:penalty_estimate}, we obtain
\begin{align*}
H_\gamma (\bar u) - H_\gamma(u^*) 
&\leq  C_b(M^{\mathrm{s}}+M^{\mathrm{t}})\norm{\bar u - u^*}{V} + C_1\norm{\bar u - u^*}{V} + \frac{1}{\gamma}(\norm{u^*}{V}+M^{\mathrm{t}})\norm{u^* - \bar u}{V} \\
&\quad - \frac{1}{2\gamma}\norm{u^*-\bar u}{V}^2 + w_{b_1}2(\norm{h}{L^2(\partial\Omega)}+\norm{\bar u}{L^2(\partial\Omega)})\norm{\bar u-u^*}{L^2(\partial\Omega)} \\
&\quad + w_{o_1}(M^{\mathrm{s}}+\norm{u^*}{V} + 2\norm{\psi}{L^2(\Omega)})\norm{\bar u-u^*}{L^2(\Omega)}-w_{b_1}\norm{\bar u - u^*}{L^2(\partial\Omega)}^2.
\end{align*}
\end{proof}Combining the previous results, we have shown 
\begin{align}
\nonumber  \left(C_a-\frac{1}{2\gamma}\right)\norm{\hatuA-u^*}{V}^2    
     &\leq  \max_{v \in K \cap Y}\corr{\inf_{w \in \cXt}}   K_1\norm{w-v}{V} + K_2\norm{w-v}{L^2(\Omega)}  + K_3\norm{w-v}{L^2(\partial\Omega)} 
      \\ 
       \nonumber &\quad +  \inf_{u \in \cXs} \left(K_4\norm{u - u^*}{V}   + K_5\norm{u-u^*}{L^2(\Omega)} +  K_6\norm{u-u^*}{L^2(\partial\Omega)} \right) \\
          \nonumber &\quad + 2\sup_{\substack{u \in \cXs\\v \in \cXt}} |L_\gamma(u, v) + R_1(u) - R_2(v) - (\hatL_\gamma(u, v) + \hat R_1(u) - \hat R_2(v))|\\
     \nonumber &\quad + \hatG_\gamma(\hatuA ) -      \hatG_\gamma(\hatu)\\
     &\quad +  H_\gamma(u^*) - G_\gamma(u^*) + \langle Au^*-f, u^*-\hatuA\rangle  - R_1(\hatuA).  \label{eq:full_error_estimate}
\end{align}
The quantity comprising the first two lines on the right-hand side above
\begin{align}
    \nonumber \xi_{\mathrm{app}} &:= \max_{v \in K \cap Y}\corr{\inf_{w \in \cXt}}   K_1\norm{w-v}{V} + K_2\norm{w-v}{L^2(\Omega)}  + K_3\norm{w-v}{L^2(\partial\Omega)}\\ 
       &\quad +  \inf_{u \in \cXs} \left(K_4\norm{u - u^*}{V}   + K_5\norm{u-u^*}{L^2(\Omega)} +  K_6\norm{u-u^*}{L^2(\partial\Omega)} \right)\label{eq:defn_approx_error}
\end{align}
is known as the \emph{approximation error}. It is related to how well the spaces $\cXs$ and $\cXt$  approximate $u^*$ and the set $K \cap Y$ respectively. The \emph{statistical error} is the quantity
\begin{align*}
    \xi_{\mathrm{stat}} := \sup_{\substack{u \in \cXs\\v \in \cXt}} |L_\gamma(u, v) + R_1(u) - R_2(v) - (\hatL_\gamma(u, v) + \hat R_1(u) - \hat R_2(v))|,
\end{align*}
i.e., it is a measure of the error arising from the numerical approximation of the integral. 
We shall estimate both of these in the next subsections. The term 
\[\xi_{\mathrm{opt}}= \hatG_\gamma(\hatuA ) - \hatG_\gamma(\hatu)\] 
is called the \emph{optimisation error}; as explained in \S \ref{remark_challenges_nonconvex_nonconcave}, this is not something we shall explore in this paper. Finally, the two terms
$\langle Au^*-f, u^*-\hatuA\rangle - R_1(\hatuA)$ and $H_\gamma(u^*) - G_\gamma(u^*)$
essentially arise due to the presence of $K$ and the inexactness of our approximating spaces. Let us deal with these terms first.
\subsubsection{Bounds on the error arising from the constraint set}
\begin{proposition}
    If $\hatuA|_{\partial\Omega} = h$ (which is the case in the situation of \eqref{eq:disc_ansatz}) and $u^* \in H^2(\Omega)$, then for every $\epsilon > 0$, if the penalty weight $w_{o_1}$ satisfies $w_{o_1} \geq \frac{1}{4\epsilon}\norm{Au^*-f}{L^2(\Omega)}^2$, we have
    \[\langle Au^*-f, u^*-\hatuA\rangle - R_1(\hatuA) \leq \epsilon.\]
\end{proposition}
\begin{proof}
We have that $\max(\psi, \hatuA)$  belongs to $K$ (since $\hatuA|_{\partial\Omega} = h$ and $\psi|_{\partial\Omega} \leq h$). Thus, using Young's inequality with $\epsilon > 0$ and noting that $\max(\psi, \hatuA)-\hatuA = (\psi-\hatuA)^+$, 
\begin{align*}
    \langle Au^*-f, u^*-\hatuA\rangle &= \langle Au^*-f, u^*-\max(\psi, \hatuA) \rangle + \langle Au^*-f, \max(\psi, \hatuA)-\hatuA\rangle \\
    &\leq \norm{Au^*-f}{L^2(\Omega)}\norm{(\psi-\hatuA)^+}{L^2(\Omega)}\\
        &\leq \epsilon + \frac{1}{4\epsilon}\norm{Au^*-f}{L^2(\Omega)}^2\norm{(\psi-\hatuA)^+}{L^2(\Omega)}^2.
\end{align*}
The claim then follows from 
\begin{align*}
    \langle Au^*-f, u^*-\hatuA\rangle - R_1(\hatuA)  &\leq \epsilon + \left(\frac{1}{4\epsilon}\norm{Au^*-f}{L^2(\Omega)}^2- w_{o_1}\right)\norm{(\psi-\hatuA)^+}{L^2(\Omega)}^2.
\end{align*}
\end{proof}
\corr{Here and below, we recall the Sobolev trace inequality, see e.g. \citet[Theorem 8.7, Chapter 1]{MR895589}.}    
\begin{proposition}[Estimate on V]
For every $\epsilon > 0$, there exists an $\alpha_0 > 0$ such that if the penalty parameters $w_{o_2}, w_{b_2}$ satisfy $\min(w_{o_2}, w_{b_2}) \geq \alpha_0$, then
\begin{align*}
H_\gamma(u^*) - G_\gamma(u^*) &\leq \epsilon - \frac{1}{2\gamma}\norm{u^*-v^*}{V}^2 - R_2(v^*),
\end{align*}
where $v^* \in \argmax_{w \in H^1(\Omega)} L_\gamma(u^*,w) -R_2(w)$.
\end{proposition}
\begin{proof}
\corr{
In this proof, $P_K\colon H^1(\Omega) \to K$ denotes the orthogonal projection operator onto the set $K$.  Let us set $v_\alpha \in \argmax_{w \in H^1(\Omega)} L_\gamma(u^*,w) -R_2(w)$ where $\alpha$ represents the penalty parameters in $R_2$. We begin with the following calculation, wherein we employ the facts that $G_\gamma(u^*)= 0$ and $\cXt\subset H^1(\Omega)$:
\begin{align}
\nonumber H_\gamma(u^*) - G_\gamma(u^*) &= \sup_{w \in \cXt} L_\gamma(u^*,w) -R_2(w)
\leq \max_{w \in H^1(\Omega)} L_\gamma(u^*,w) -R_2(w)\\
\nonumber &= \langle Au^*-f, u^*-v_\alpha\rangle - \frac{1}{2\gamma}\norm{u^*-v_\alpha}{V}^2 - R_2(v_\alpha)\\
\nonumber &\leq \langle Au^*-f, P_K(v_\alpha)-v_\alpha\rangle - \frac{1}{2\gamma}\norm{u^*-v_\alpha}{V}^2 - R_2(v_\alpha)\\
&\leq \norm{Au^*-f}{V^*}\norm{P_K(v_\alpha)-v_\alpha}{V} - \frac{1}{2\gamma}\norm{u^*-v_\alpha}{V}^2 - R_2(v_\alpha).\label{eq:to_control}
\end{align}
It remains only to estimate the first term on the right-hand side above, i.e., we are left to show that
$\norm{Au^*-f}{V^*}\norm{P_K(v_\alpha)-v_\alpha}{V} \leq \epsilon$
for arbitrary $\epsilon$ provided we make $\alpha$ large enough. This can be achieved if we show that $v_\alpha$ converges strongly in $V$ to some $\hat v \in K$ as $\alpha \to \infty$ because of the estimate
\begin{equation}
    \norm{P_K(v_\alpha)-v_\alpha}{V} \leq \norm{P_K(v_\alpha) - P_K(\hat v)}{V} + \norm{\hat v-v_\alpha}{V} \leq 2\norm{\hat v-v_\alpha}{V}\label{eq:new_1}
\end{equation}
where we used the triangle inequality and that $\hat v = P_K(\hat v)$. 

We will show this desired convergence of $v_\alpha$ in multiple steps.
\begin{enumerate}[label=(\roman*), wide, labelwidth=!, labelindent=0pt]\itemsep=0em
\item \textbf{Weak subsequential convergence of $v_\alpha$ and feasibility of limit.} 
Let us first show that $v_\alpha$ converges weakly for a subsequence. Writing occasionally $R_2^\alpha$ instead of $R_2$ to emphasise the dependence of $R_2$ on $\alpha$ and setting
\[J_\alpha(w) := R_2^\alpha(w)-L_\gamma(u^*,w),\]
we have by definition of $v_\alpha$ that $J_\alpha(v_\alpha) \leq J_\alpha(w)$ for all $w \in H^1(\Omega)$; taking $w=u^*\in K$ and manipulating gives
\begin{equation}
R_2^\alpha(v_\alpha) + \frac{1}{4\gamma}\norm{u^*-v_\alpha}{V}^2 \leq \gamma\norm{Au^* -f}{V^*}^2.\label{eq:leads_to_conv}
\end{equation}
This uniform bound implies the existence of $\hat v \in V$ such that, for a subsequence (that we relabel), $v_\alpha \weaklyto \hat v$ in $V$ and strongly in $L^2(\Omega)$. Since $R_2^\alpha(v_\alpha) = w_{o_2} \int_\Omega |(\psi-v_\alpha)^+|^2 + w_{b_2} \int_{\partial\Omega} (v_\alpha -h)^2$, if we divide the above inequality by $w_{o_2}$, disregard the boundary term and vice versa with $w_{b_2}$, we find $\int_\Omega |(\psi-v_\alpha)^+|^2 +   \int_{\partial\Omega} (v_\alpha -h)^2 \to 0$ in the limit $w_{b_2}, w_{o_2} \to \infty$. Using the convergence result for $v_\alpha$ and the fact that the trace operator is linear and continuous as well as the weak lower semicontinuity of the functionals involved, we then get $\hat v \in K$.
\item \textbf{Strong subsequential convergence of $v_\alpha \to \hat v$.} Now, we prove that the above weak convergence (for a subsequence) can be upgraded to strong. Let
$J(w) := -L_\gamma(u^*,w)$.
We have $J_\alpha(w) \geq J(w)$ and so, using weak lower semicontinuity,
\begin{align*}
\liminf_{\alpha \to \infty} J_\alpha(v_\alpha) &\geq \liminf_{\alpha \to \infty} J(v_\alpha) = 
\liminf_{\alpha \to \infty} \frac{1}{2\gamma}\norm{u^*-v_\alpha}{V}^2 - \langle Au^*-f, u^*-v_\alpha \rangle\\
&\geq \frac{1}{2\gamma}\norm{u^*-\hat v}{V}^2 - \langle Au^*-f, u^*-\hat v \rangle = J(\hat v).
\end{align*}
On the other hand, since $J_\alpha(v_\alpha) \leq J_\alpha(\hat v)$ and using the fact that $\hat v \in K$,
\begin{align*}
\liminf_{\alpha \to \infty} J_\alpha(v_\alpha) \leq \liminf_{\alpha \to \infty} J_\alpha(\hat v) = \liminf_{\alpha \to \infty} -L_\gamma(u^*, \hat v) = J(\hat v).
\end{align*}
Hence,  $\liminf_{\alpha \to \infty} J(v_\alpha) 
= J(\hat v)$ 
and therefore for a subsequence (relabelled), we get $J(v_\alpha) \to J(\hat v)$. That is,
\[\frac{1}{2\gamma}\norm{u^*-v_\alpha}{V}^2 - L(u^*,v_\alpha) \to \frac{1}{2\gamma}\norm{u^*-\hat v}{V}^2 - L(u^*,\hat v),\] 
and since $L(u^*,v_\alpha) \to L(u^*, \hat v)$, it follows that $\norm{u^*-v_\alpha}{V} \to \norm{u^*-\hat v}{V}$. The weak convergence $u^*-v_\alpha \weaklyto u^*-\hat v$ together with the above norm convergence then yields $v_\alpha \to \hat v$ in $H^1(\Omega)$. 

\item \textbf{Conclusion.} Now we have $J_\alpha(v_\alpha) \leq J_\alpha(k) = J(k)$ for all $k \in K$, since $K \subset H^1(\Omega)$ and $v_\alpha$ is the minimiser. Taking the limit inferior in this inequality and using the first chain of inequalities above in step (ii),
$J(\hat v) \leq J(k)$ for all $k \in K$,
i.e., $\hat v$ is the minimiser of $J$ over $K$ (which is uniquely determined). The subsequence principle then tells us that the entire sequence $\{v_\alpha\}$ converges. Thanks to this and \eqref{eq:to_control} and \eqref{eq:new_1}, we obtain the result.   \end{enumerate}
}
\end{proof}
The proof reveals an error rate. \corr{Indeed, we get from \eqref{eq:leads_to_conv} and the definition of $R_2$ that 
\begin{align*}
 w_{o_2} \int_\Omega |(\psi-v_\alpha)^+|^2 + w_{b_2} \int_{\partial\Omega} (v_\alpha -h)^2 + \frac{1}{4\gamma}\norm{u^*-v_\alpha}{V}^2 \leq \gamma\norm{Au^* -f}{V^*}^2, 
\end{align*}
whence, recalling that $\alpha$ represents the weights $w_{o_1}, w_{o_2}$,}
\begin{align*}
\norm{(\psi-v_\alpha)^+}{L^2(\Omega)} = \mathcal{O}(1\slash\sqrt{\alpha})\qquad\text{and}\qquad \norm{v_\alpha-h}{L^2(\partial\Omega)} = \mathcal{O}(1\slash\sqrt{\alpha}).
\end{align*}

\subsubsection{Bounds on the approximation error}
To control the approximation error we need to ensure that the two network spaces $\cXs$ and $\cXt$ are sufficiently rich. This relates to universal approximation theorems for neural networks. 
The strength of the approximation results strongly depends on the norm used, and some care is required in ensuring that one can apply them to $\xi_{\mathrm{app}}$ since we need uniformity in $K \cap Y$ in order to bound the maxmin term there. In order to keep our theory as general as reasonably possible and bearing in mind that approximation results for different architectures are rapidly evolving, we make the next assumption and provide a scenario (see Proposition \ref{prop:satisfaction_of_density}) in which it is satisfied.
\begin{assumption}[Assumptions on the density of neural network spaces in $H^1(\Omega)$]\label{ass:approx}
~
\begin{enumerate}[label=(\roman*)]\itemsep=0cm
\item\label{ass:approx_solution} There exists a neural network architecture $\cFs$ with activation function $\sigmas$ satisfying: for every $w \in H^2(\Omega)$ and every $\epsilon > 0$, there exists  $\fds, \fws  \in \mathbb{N}$ and  $u \in \cFs(\fds, \fws, \sigmas)$ such that
\[\norm{u-w}{V} \leq \epsilon.\]
\item\label{ass:approx_test} 
There exists a neural network architecture $\cFt$ with activation function $\sigmat$ satisfying: for every $k \in Y$ and every $\epsilon > 0$, there exist $\fdt, \fwt \in \mathbb{N}$ independent of $k$ and  $v \in \cFt(\fdt, \fwt, \sigmat)$ such that
\[\norm{v-k}{V} \leq \epsilon.\]
\end{enumerate}
\end{assumption}
We also need the space $Y$ to be such that $K \cap Y$ is bounded in $V$:
\begin{assumption}\label{ass:on_boundedness_of_KcapY}
Assume that there exists $\bar R >0$ such that $\norm{v}{V} \leq \bar R$ for all $v \in K \cap Y.$   
\end{assumption}
In the next proposition, we show that these assumptions can be satisfied. To keep matters simple we focus only on the two activation functions $\mathrm{ReLU}$ and $\tanh$; others are possible too.
\begin{proposition}\label{prop:satisfaction_of_density}
Let $h \in H^{3\slash 2}(\partial\Omega)$, $\psi \in H^2(\Omega)$, and let $\Omega$ be convex or $C^{1,1}$. Let $\tilde R$ be a constant satisfying
 \[\tilde R \geq  \norm{u^*}{V},\]
set $Y:=B^{H^2(\Omega)}(\tilde R)$, the closed ball in $H^2(\Omega)$ with center $0$ and radius $\tilde R$, and choose $\cFt$ and $\cFs$ as $\mathcal{F}_{\mathrm{FFN}}$ with $\mathrm{ReLU}$ or $\tanh$ as the activation. Then     Assumption \ref{ass:approx} and Assumption \ref{ass:on_boundedness_of_KcapY} are satisfied 
\end{proposition}
\begin{proof}
Proposition 4.8 of \citet{GUHRING2021107} implies the following: let $\mu> 0$ be arbitrary; then there exist constants $\fdt, C, \theta$ and $\bar\epsilon$ such that for every $\epsilon < \bar\epsilon$ and every $k \in B^{H^2(\Omega)}(1)$, there exists a neural network $v \in \mathcal{F}_{\mathrm{FFN}}$ with at most $\fdt$ layers and at most 
\[
\begin{cases}
C\epsilon^{-n/(1-\mu)} &: \text{if activation  is $\tanh$,}\\
C\epsilon^{-n} &: \text{if activation is $\mathrm{ReLU}$,}
\end{cases}
\]
non-zero weights bounded in absolute value by $C\epsilon^{-\theta}$ satisfying
$\norm{v-k}{H^1(\Omega)} \leq \epsilon$.
Now if $g \in B^{H^2(\Omega)}(R)$, the above gives us a neural network $\tilde v$   with $\norm{\tilde v-g\slash R}{H^1(\Omega)} \leq \epsilon\slash R$ as long as $\epsilon \leq R\bar\epsilon$. Setting $v:= R\tilde v$, we get $\norm{v-g}{H^1(\Omega)} \leq \epsilon$ where $v \in \mathcal{F}_{\mathrm{FFN}}$ is also a neural network (with the same architecture as $\tilde v$ except the final layer has been scaled by $R$) with at most
\[
\begin{cases}
C(R^{-1}\epsilon)^{-n/(1-\mu)} &: \text{if activation  is $\tanh$,}\\
C(R^{-1}\epsilon)^{-n} &: \text{if activation is $\mathrm{ReLU}$,}
\end{cases}
\]
non-zero weights.

The key point that gives us the existence of the neural network architecture that is independent of the particular element $k$ that is being approximated is the following: if a network with an arbitrary number of neurons and layers has at most $M$ non-zero weights,  it can be viewed as a network with $M$ non-zero weights and at most $M+1$ neurons and layers. Since the above gives us at most $C(R^{-1}\epsilon)^{-n/(1-\mu)}$ or $C(R^{-1}\epsilon)^{-n}$ non-zero weights and this number is independent of $k$, it can be viewed as a neural network in $\mathcal{F}_{\mathrm{FFN}}
$ with a fixed (independent of $k$) size.

Hence Assumption \ref{ass:approx}  \ref{ass:approx_test} holds if we choose $Y=B^{H^2(\Omega)}(\tilde R)$ and $\cFt = \mathcal{F}_{\mathrm{FFN}}$ as the standard feedforward neural network architecture. This is a valid choice of $Y$ (i.e.,  it satisfies Assumption \ref{ass:on_spaces} by the assumptions on the domain and the data, see Proposition \ref{prop:h2_regularity}). Assumption \ref{ass:approx} \ref{ass:approx_solution} also holds by the above argument since $u^* \in H^2(\Omega)$. 

As $Y=B^{H^2(\Omega)}(\tilde R)$, Assumption \ref{ass:on_boundedness_of_KcapY} follows from the continuous embedding $H^2(\Omega) \cts V$ with $\bar R = \tilde R$.
\end{proof}
Since in our implementation we use the special ResNet structure of \citet{DeepRitz} for which approximation results still appear to be incomplete, we shall postpone the study of approximation theorems for our specific architecture to a later work. For approximation results involving the standard ResNet structure, we refer to e.g. \citet{Yuto, liu2024characterizing, li2022deep, NEURIPS2018_03bfc1d4}; note that  networks with one hidden layer possess universal approximation power in $L^1$ for ResNets \citep{NEURIPS2018_03bfc1d4}. 

Now, take $P \geq 1+ \norm{u^*}{V}$ and $Q \geq 1+\bar R$ and fix
$\cXs := \cFs \cap B^{H^1(\Omega)}(P)$ and $\cXt := \cFt \cap B^{H^1(\Omega)}(Q),$
where $\cFs$ and $\cFt$ are as in Assumption \ref{ass:approx}.
Due to this, if we set $R^* := \max(P, Q, C_TP, C_TQ)$, we get that the constants in \eqref{eq:uniform_bound_NN_spaces} satisfy $\max(M^{\mathrm{s}}, M^{\mathrm{t}}, B^{\mathrm{s}},B^{\mathrm{t}})  \leq R^*$.
Observe that all the constants $K_i$ depend on $R^*$. 
    Define $\kappa_1:= \max(1, K_4+ K_5+C_TK_6)$. Under  Assumption \ref{ass:approx} \ref{ass:approx_solution}, for every $\epsilon \in (0,1)$, there exists $\tilde{u} \in \cXs$ such that
    \begin{equation}\label{mh:square}
    \norm{\tilde u - u^*}{V} \leq \frac{\epsilon}{3\kappa_1}.
    \end{equation}
Note further that since  $\kappa_1 \geq 1$  and $\tilde u \in \cXs$, 
\[\norm{\tilde u}{V} \leq \norm{\tilde u - u^*}{V} + \norm{u^*}{V} \leq \frac{\epsilon}{3\kappa_1} +  \norm{u^*}{V} \leq 1+ \norm{u^*}{V} \leq P.\]
In a similar way, let $\kappa_2:= \max(1, K_1  + K_2 + C_TK_3)$. Under Assumption \ref{ass:approx} \ref{ass:approx_test}, for $\epsilon \in (0,1)$, we can show
\begin{equation}
\max_{k \in  K \cap Y}\corr{\inf_{v \in \cXt}}\norm{k-v}{V} \leq \frac{\epsilon}{3(K_1+ K_2+C_TK_3)}.\label{mh:square2}    
\end{equation}
Indeed, fixing an arbitrary $k \in K\cap Y$, by Assumption \ref{ass:approx} \ref{ass:approx_test} we get the existence of $\fdt, \fwt \in \mathbb{N}$ \textit{uniform in $k$} and $v \in \mathcal{F}(\fdt, \fwt, \sigmat)$ with $\norm{k-v}{V} \leq \epsilon\slash (3\kappa_2).$ This implies, because $\kappa_2 \geq 1$ and by the bound on $k$ from Assumption \ref{ass:on_boundedness_of_KcapY}, 
\[\norm{v}{V} \leq \norm{v-k}{V} + \norm{k}{V} \leq \frac{\epsilon}{3\kappa_2} + \bar R \leq 1 + \bar R \leq Q.\]
This shows that $v \in B^{H^1(\Omega)}(Q)$ and thus $v \in \cXt$. Then \eqref{mh:square2} follows from 
\[(K_1 + K_2 +  C_TK_3)\norm{k-v}{V} \leq \kappa_2\norm{k-v}{V} \leq \frac{\epsilon}{3}\]
\corr{and the fact that the $\max_{k \in K \cap Y}$ in \eqref{mh:square2} exists. Indeed, this is because the map
$ k \mapsto \inf_{v \in \cX_t} \|k-v\|_V $
is continuous on $V$ and moreover, by compact embedding every maximising sequence in $K \cap Y$ admits a subsequence that converges strongly in $V$ and whose limit is again in $K \cap Y$ by Assumption \ref{ass:on_spaces}. } 
\begin{theorem}\label{thm:approximation_error}
Let Assumption \ref{ass:approx} and Assumption \ref{ass:on_boundedness_of_KcapY} hold. For any $\epsilon > 0$,  there exist architectures $\cFs, \cFt$, activations $\sigmas, \sigmat$, numbers $\fds, \fws, \fdt, \fwt \in \mathbb{N}$ such that, with $\cXs = \cFs(\fds, \fws, \sigmas)$ and $\cXt = \cFt(\fdt, \fwt, \sigmat)$, we have
\begin{align*}
     \xi_{\mathrm{app}} \leq \epsilon.
\end{align*}
\end{theorem}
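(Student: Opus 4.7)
The plan is to combine the two density estimates \eqref{mh:square} and \eqref{mh:square2} that have just been derived with the continuous embedding $H^1(\Omega) \hookrightarrow L^2(\Omega)$ and the trace inequality $\norm{\cdot}{L^2(\partial\Omega)} \leq C_T \norm{\cdot}{V}$ in order to reduce every norm appearing in $\xi_{\mathrm{app}}$ to an $H^1(\Omega)$ norm, to which the approximation assumptions apply directly.

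First, I invoke \cref{ass:approx} \ref{ass:approx_solution}: since $u^* \in H^2(\Omega)$ by \cref{prop:h2_regularity}, there exist $\fds, \fws \in \mathbb{N}$ and $\tilde u \in \cFs(\fds,\fws,\sigmas)$ with $\norm{\tilde u - u^*}{V} \leq \epsilon / (3\kappa_1)$, and by the computation immediately following \eqref{mh:square} this $\tilde u$ actually lies in $\cXs = \cFs \cap B^{H^1(\Omega)}(P)$. Using $\norm{\tilde u - u^*}{L^2(\Omega)} \leq \norm{\tilde u - u^*}{V}$ and $\norm{\tilde u - u^*}{L^2(\partial\Omega)} \leq C_T \norm{\tilde u - u^*}{V}$, the inf part of $\xi_{\mathrm{app}}$ is bounded by
\[
(K_4 + K_5 + C_T K_6)\norm{\tilde u - u^*}{V} \leq \kappa_1 \cdot \frac{\epsilon}{3\kappa_1} = \frac{\epsilon}{3}.
\]

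Second, I invoke \cref{ass:approx} \ref{ass:approx_test} to produce numbers $\fdt, \fwt \in \mathbb{N}$ (crucially independent of $k$) such that for every $k \in Y$ there exists $v \in \cFt(\fdt, \fwt, \sigmat)$ with $\norm{v - k}{V} \leq \epsilon / (3\kappa_2)$. The argument immediately after \eqref{mh:square2} shows that for $k \in K \cap Y$ (where \cref{ass:on_boundedness_of_KcapY} supplies the uniform $V$-bound $\bar R$), the approximant $v$ lies in $\cXt = \cFt \cap B^{H^1(\Omega)}(Q)$. Applying the same trace inequality and embedding, for each $k \in K \cap Y$ this $v \in \cXt$ satisfies
\[
K_1 \norm{v-k}{V} + K_2\norm{v-k}{L^2(\Omega)} + K_3\norm{v-k}{L^2(\partial\Omega)} \leq \kappa_2\norm{v-k}{V} \leq \frac{\epsilon}{3},
\]
and taking the minimum over $\cXt$ and then the maximum over $k \in K \cap Y$ yields that the max–min part of $\xi_{\mathrm{app}}$ is bounded by $\epsilon / 3$. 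Summing the two contributions gives $\xi_{\mathrm{app}} \leq 2\epsilon/3 \leq \epsilon$, completing the proof.

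The only subtle point (and the one place where the structure of the assumptions really matters) is that the architecture $\cFt(\fdt, \fwt, \sigmat)$ produced in the second step must be \emph{uniform} in $k \in K \cap Y$, since otherwise one could not interchange the max over $k$ with the fixed choice of network space $\cXt$. This uniformity is precisely what \cref{ass:approx} \ref{ass:approx_test} was formulated to guarantee, and \cref{prop:satisfaction_of_density} confirms that such uniform architectures indeed exist in the concrete $\mathrm{ReLU}$ and $\tanh$ settings. Apart from this observation, all remaining steps are direct consequences of the estimates already prepared before the theorem statement.
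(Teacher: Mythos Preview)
Your proposal is correct and follows essentially the same route as the paper: reduce the $L^2(\Omega)$ and $L^2(\partial\Omega)$ norms to the $V$-norm via the embedding and trace inequality, then invoke the prepared estimates \eqref{mh:square} and \eqref{mh:square2} to bound each piece of $\xi_{\mathrm{app}}$ by $\epsilon/3$. The paper's proof is terser (compressing everything into one displayed inequality), but your version makes the role of the uniformity in \cref{ass:approx}~\ref{ass:approx_test} and the ball intersection explicit, which is exactly the content already worked out in the paragraphs preceding the theorem.
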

\begin{proof}
Fix $\epsilon > 0$. \corr{Beginning with the definition of $\xi_{\mathrm{app}}$ in  \eqref{eq:defn_approx_error}, we find}
\begin{align*}
\xi_{\mathrm{app}}  &\leq  \max_{k \in K \cap Y}\corr{\inf_{v \in \cXt}}   (K_1+K_2+C_TK_3)\norm{k-v}{V} +  \inf_{\bar u \in \cXs} (K_4+K_5+C_TK_6)\norm{\bar u - u^*}{V}
\leq \epsilon
\end{align*}
\corr{where we employed the trace theorem for the boundary term $\norm{k-v}{L^2(\partial\Omega)} \leq C_T\norm{k-v}{V}$ to obtain the first inequality and made use of \eqref{mh:square} and \eqref{mh:square2} for the final inequality.}
\end{proof}
At this point let us remark that if we had $\hatuA \in K$ the above error analysis could have been greatly simplified.

\subsubsection{The \eqref{eq:disc_ansatz} setting}
In the case of \eqref{eq:disc_ansatz} we wish to be able to choose $\cFs$ to be $\cFs_{0,\eta} + \tilde h$, or at least $\cFs_{0} + \tilde h$ .  In practice, the above results imply that we can `almost' do this. Indeed, recalling $\tilde u$ from \eqref{mh:square}, we see that
\[\norm{\tilde u - h}{H^{1\slash 2}(\partial\Omega)} \leq C_T\norm{\tilde u - u^*}{V} \leq \frac{C_T\epsilon}{3\kappa_1},\]
and setting $\hat u:= (\tilde u - \tilde h) + \tilde h$, which satisfies $\norm{\hat u - u^*}{V} = \norm{\tilde u - u^*}{V} \leq \epsilon\slash (3\kappa_1),$ 
we can conclude that $\cFs$ can be taken to be $\cFs_{\approx 0} + \tilde h$ where $\cFs_{\approx 0}$ means a set of neural networks that are almost zero on the boundary. A similar argument applies for $\cFt$ too. To be able to make this exact, we need to know if Sobolev functions that are zero on the boundary can be approximated by neural networks that are zero on the boundary and we need to do so in a uniform way (the width and depth of the network should be independent of the target function) for $\mathcal{F}^{\mathrm{t}}_0$.  Theorem 2 in \citet{MR4455184} tells us that $H^1_0(\Omega)$ functions can be realised by $\mathrm{ReLU}$ neural networks of depth $\lceil \log_2(d+1) \rceil +1$. In view of works such as \citet{GUHRING2021107}, the desired uniform approximation result appears reasonable, however, we are not aware at present of any literature with quantitative rates that supply such a result.

\subsubsection{Bounds on the statistical error}\label{sec:statError}
In this section, we bound the following quantity
\[
    \corr{\mathbb{E}_{\{x_i\}_{i=1}^N}} \left[ \sup_{\substack{u \in \cFs\\v \in \cFt}} \left| L_\gamma(u, v) + R_1(u) - R_2(v) - (\hatL_\gamma(u, v) + \hat R_1(u) - \hat R_2(v)) \right| \right],
\]
in the context of the problem \eqref{eq:disc_ansatz}, i.e., when the boundary condition is met and the penalty terms only contain contributions of the obstacle loss $L_o.$  We carry out the calculations for neural neworks of the special ResNet structure $\mathcal{F}_{\mathrm{DRR}}$. A statistical error analysis for $\mathcal{F}_{\mathrm{FFN}}$ involving different loss functions can be found in \citet[\S \corr{4.4}]{Jiao2023}, whose arguments we adapt.

We first need the concept of Rademacher complexity and covering numbers.
\begin{definition}[Rademacher complexity]
    Let $\mathcal{F}$ be a family of functions from $\Omega$  into $\mathbb{R}$ and let $P$ be a distribution over $\Omega$ and $\{X_i\}_{i=1}^N$ be independent identically distributed (iid) samples from $P$. The \emph{Rademacher complexity} of $\mathcal{F}$ associated with the distribution $P$ and sample size $N$ is defined as
    \[
        \mathcal{R}(\mathcal{F}) := \mathbb{E}_{\{X_i\}_{i=1}^N} \mathbb{E}_{\{\sigma_i\}_{i=1}^N} \left[ \sup_{u\in\mathcal{F}} \frac{1}{N}\sum_{i=1}^N \sigma_i u(X_i) \right],
    \]
    where $\{\sigma_i\}_{i=1}^N$ are iid random variables such that $\mathbb{P}[\sigma_i = 1] = \mathbb{P}[\sigma_i = -1] = \frac{1}{2}$.
\end{definition}

\begin{definition}[Covering number]
Given $\varepsilon > 0$, we say that $\mathcal{A} \subset \mathbb{R}^n$ is an \emph{$\varepsilon$-cover of $\mathcal{B}\subset \mathbb{R}^n$ with respect to a metric $\rho$} if for all $v^\prime\in \mathcal{B}$, there exists $v\in \mathcal{A}$ such that $\rho(v, v^\prime) \leq \varepsilon$.

    The \emph{$\varepsilon$-covering number of $\mathcal{B}$}, denoted as $\mathfrak{C}(\varepsilon, \mathcal{B}, \rho)$,is the minimum cardinality among all $\varepsilon$-covers of $\mathcal{B}$ with respect to the metric $\rho$.
\end{definition}
Rademacher complexity is useful because it bounds the statistical error from above as the next result shows. Below, we use the notation $U(\Omega)$ to denote the uniform distribution on $\Omega$. 
\begin{lemma}
Assume that the 
collocation points $\{x_i\}_{i=1}^N$ (see \S \ref{sec:disc_problems}) are iid drawn from $U(\Omega)$. Then 
\begin{equation*}\corr{\mathbb{E}_{\{x_i\}_{i=1}^N}} \left[ \sup_{\substack{u \in \cFs\\ v \in \cFt}}|L_\gamma(u, v) + R_1(u) - R_2(v) - (\hatL_\gamma(u, v) + \hat R_1(u) - \hat R_2(v))| \right] \leq 2|\Omega|\sum_{i=1}^{10} \mathcal{R}(\cF_i),
\end{equation*}
where
    \begin{equation*}   
    \begin{split}    
        \mathcal{F}_1 &= \{\Vert \nabla u\Vert^2 : u\in\cFs \}, \\
        \mathcal{F}_3 &= \{ ku^2: u\in\cFs\}, \\
                \mathcal{F}_5 &= \{ fu: u\in\cFs\}, \\
        \mathcal{F}_7 &= \left\{ \frac{1}{2\gamma}(u - v)^2: u\in\cFs, v\in\cFt\right\}, \\
        \mathcal{F}_9 &= \{ w_{o_1}|(\psi-u)^+|^2 : u \in \cFs\},
    \end{split}
    \hspace{2cm}
    \begin{split}        
            \mathcal{F}_2 &= \{ \nabla u \cdot \nabla v: u\in\cFs, v\in\cFt \}, \\
        \mathcal{F}_4 &= \{ kuv: u\in\cFs, v\in\cFt\},     \\
        \mathcal{F}_6 &= \{ fv: v\in\cFt\}, \\
        \mathcal{F}_8 &= \left\{ \frac{1}{2\gamma}|\nabla u - \nabla v|^2: u\in\cFs, v\in\cFt\right\},\\
        \mathcal{F}_{10} &= \{ w_{o_2}|(\psi-v)^+|^2 : u \in \cFt\}.
    \end{split}
\end{equation*}

\end{lemma}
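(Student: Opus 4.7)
The plan is to first perform a straightforward algebraic decomposition, then apply the classical symmetrisation argument from statistical learning theory to each of the ten resulting pieces. Note that since the Rademacher complexities on the right-hand side are deterministic (expectations), the bound has to be read as an expectation bound over the random draw of collocation points, i.e.\ I read the left-hand side as $\mathbb{E}_{\{x_i\}}[\,\cdot\,]$. I would make this convention explicit at the top of the proof.

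First I would expand the difference pointwise. Writing out
\[
L_\gamma(u,v) = \int_\Omega \Bigl(|\nabla u|^2 - \nabla u\cdot\nabla v + k u^2 - k u v - f u + f v - \tfrac{1}{2\gamma}(u-v)^2 - \tfrac{1}{2\gamma}|\nabla u - \nabla v|^2\Bigr),
\]
and in the \eqref{eq:disc_ansatz} setting $R_1(u) = w_{o_1}\int_\Omega |(\psi-u)^+|^2$ (boundary penalty vanishes), and similarly for $R_2(v)$, I identify exactly the ten function classes $\mathcal{F}_1,\ldots,\mathcal{F}_{10}$ listed. Under the assumption that the $x_i$ are iid from $U(\Omega)$ (density $1/|\Omega|$ on $\Omega$), for any measurable $g\colon\Omega\to\mathbb R$,
\[
\int_\Omega g\,dx - \frac{|\Omega|}{N}\sum_{i=1}^N g(x_i) = |\Omega|\Bigl(\mathbb{E}[g(X)] - \frac{1}{N}\sum_{i=1}^N g(x_i)\Bigr).
\]
A triangle inequality applied across the ten terms in $L_\gamma + R_1 - R_2 - (\hat L_\gamma + \hat R_1 - \hat R_2)$ then gives
\[
\sup_{u\in\cFs, v\in\cFt}\Bigl|L_\gamma(u,v)+R_1(u)-R_2(v)-(\hat L_\gamma(u,v)+\hat R_1(u)-\hat R_2(v))\Bigr| \le |\Omega|\sum_{i=1}^{10}\sup_{g\in\mathcal{F}_i}\Bigl|\mathbb{E}[g(X)] - \tfrac{1}{N}\sum_{j=1}^N g(x_j)\Bigr|.
\]
Taking expectation over $\{x_i\}$ reduces the problem to ten copies of the same universal bound.

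The main (and only substantive) step is then to prove the standard symmetrisation inequality
\[
\mathbb{E}\Bigl[\sup_{g\in\mathcal{F}}\bigl|\mathbb{E}[g(X)] - \tfrac{1}{N}\sum_{i=1}^N g(X_i)\bigr|\Bigr] \le 2\mathcal{R}(\mathcal{F})
\]
for each class $\mathcal{F}_i$. This is done by introducing an independent ghost sample $\{X_i'\}$ with the same distribution, writing $\mathbb{E}[g(X)] = \mathbb{E}[\frac{1}{N}\sum_i g(X_i')]$, pushing the expectation over the ghost sample outside the supremum by Jensen's inequality, then using that $g(X_i)-g(X_i')$ is symmetric in distribution (so its law is invariant under multiplication by iid Rademacher signs $\sigma_i$), and finally splitting by the triangle inequality to recognise both resulting terms as $\mathcal{R}(\mathcal{F})$. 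Applying this to each of $\mathcal{F}_1,\ldots,\mathcal{F}_{10}$ and summing picks up the factor $2|\Omega|$ in front.

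The only real obstacle is a bookkeeping one: making sure the signs in the decomposition line up correctly (each $\mathcal{F}_i$ is defined without sign, and the supremum of an absolute value is insensitive to the sign of the integrand, so this is harmless), and making sure that the $w_{o_1}, w_{o_2}$ prefactors and the $1/(2\gamma)$ factors sit inside the function classes $\mathcal{F}_7,\mathcal{F}_8,\mathcal{F}_9,\mathcal{F}_{10}$ as written rather than being pulled out, so that the classes match the definitions used on the right-hand side. Nothing requires a Lipschitz contraction argument at this stage because each $\mathcal{F}_i$ is defined directly as the target class; contraction lemmas would only enter if one wanted to reduce $\mathcal{R}(\mathcal{F}_i)$ further to the Rademacher complexity of $\cFs$ or $\cFt$ themselves, which is a separate calculation not required by the present lemma.
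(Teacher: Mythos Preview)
Your proposal is correct and follows essentially the same approach as the paper: the decomposition into the ten elementary function classes, the triangle inequality, and the ghost-sample symmetrisation argument with Rademacher signs are exactly what the paper does. Your observation that the inequality must be read in expectation over the collocation points is also how the paper handles it (the expectation is taken inside the proof even though it is not written in the statement).
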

\begin{proof}
 We can write $L_\gamma$ as  
\begin{align*}
    L_\gamma(u,v) &= |\Omega|\mathbb{E}_{X\sim U(\Omega)}\big[ \nabla u(X) \cdot (\nabla u(X) - \nabla v(X)) + ku(X)(u(X) - v(X)) - f(X)(u(X) - v(X)) \big] \\
    &\quad- |\Omega|\mathbb{E}_{X\sim U(\Omega)}\left[\frac{1}{2\gamma}\big((u(X) - v(X))^2 + |\nabla u(X) - \nabla v(X)|^2 \big)\right].
\end{align*}
With this in mind, we can decompose
\begin{equation*}
    |L_\gamma(u, v) + R_1(u) - R_2(v) - (\hatL_\gamma(u, v) + \hat R_1(u) - \hat R_2(v))| \leq \sum_{i=1}^8 |L_{\gamma, i}(u, v) - \hatL_{\gamma, i}(u, v)| + \sum_{i=1}^2 |R_i(u,v) - \hat R_i(u,v)| 
\end{equation*}
where
\begin{equation*}
  \begin{split}  
    L_{\gamma, 1}(u, v) &= |\Omega|\mathbb{E}_{X\sim U(\Omega)}\left[ \Vert \nabla u(X) \Vert^2 \right ], \\
    L_{\gamma, 3}(u, v) &= k|\Omega|\mathbb{E}_{X\sim U(\Omega)}\left[ u(X)^2 \right ], \\
    L_{\gamma, 5}(u, v) &= |\Omega|\mathbb{E}_{X\sim U(\Omega)}\left[ f(X)u(X) \right ], \\
    L_{\gamma, 7}(u, v) &= \frac{|\Omega|}{2\gamma}\mathbb{E}_{X\sim U(\Omega)}\left[ (u(X) - v(X))^2 \right ], \\
    R_1(u,v) &= |\Omega||\mathbb{E}_{X\sim U(\Omega)}\left[ |(\psi(X)-u(X)^+|^2\right],
  \end{split}
  \hspace{1.5cm}
  \begin{split}
    L_{\gamma, 2}(u, v) &= |\Omega|\mathbb{E}_{X\sim U(\Omega)}\left[ \nabla u(X) \cdot \nabla v(X) \right ], \\
    L_{\gamma, 4}(u, v) &= k|\Omega|\mathbb{E}_{X\sim U(\Omega)}\left[ u(X)v(X) \right ], \\
    L_{\gamma, 6}(u, v) &= |\Omega|\mathbb{E}_{X\sim U(\Omega)}\left[ f(X)v(X) \right ], \\
    L_{\gamma, 8}(u, v) &= \frac{|\Omega|}{2\gamma}\mathbb{E}_{X\sim U(\Omega)}\left[ |\nabla u(X) - \nabla v(X)|^2 \right ],\\ 
    R_2(u,v) &= |\Omega||\mathbb{E}_{X\sim U(\Omega)}\left[ |(\psi(X)-v(X)^+|^2\right],
  \end{split}
\end{equation*}
and
\begin{equation*}
 \begin{split}  
    \hatL_{\gamma, 1}(u, v) &= \frac{|\Omega|}{N} \sum_{i=1}^N \Vert \nabla u(x_i) \Vert^2, \\
    \hatL_{\gamma, 3}(u, v) &= \frac{k|\Omega|}{N} \sum_{i=1}^N u(x_i)^2, \\
    \hatL_{\gamma, 5}(u, v) &= \frac{|\Omega|}{N} \sum_{i=1}^N f(x_i)u(x_i), \\
    \hatL_{\gamma, 7}(u, v) &= \frac{|\Omega|}{2\gamma N} \sum_{i=1}^N (u(x_i) - v(x_i))^2,     \\
    \hat R_1(u,v) &= \frac{|\Omega|}{N} \sum_{i=1}^N |(\psi(x_i)-u(x_i))^+|^2,
  \end{split}
  \hspace{2cm}
  \begin{split}    
    \hatL_{\gamma, 2}(u, v) &= \frac{|\Omega|}{N} \sum_{i=1}^N \nabla u(x_i) \cdot \nabla v(x_i), \\
    \hatL_{\gamma, 4}(u, v) &= \frac{k|\Omega|}{N} \sum_{i=1}^N u(x_i)v(x_i), \\
    \hatL_{\gamma, 6}(u, v) &= \frac{|\Omega|}{N} \sum_{i=1}^N f(x_i)v(x_i), \\
    \hatL_{\gamma, 8}(u, v) &= \frac{|\Omega|}{2\gamma N} \sum_{i=1}^N |\nabla u(x_i) - \nabla v(x_i)|^2,\\
    \hat R_2(u,v) &= \frac{|\Omega|}{N} \sum_{i=1}^N |(\psi(x_i)-v(x_i))^+|^2.
  \end{split}
\end{equation*}
Now if we let $F_{u,v}$ denote a function of $u$ and $v$ and their first derivatives, we can write
\begin{align*}
    \left| |\Omega| \mathbb{E}_{X\sim U(\Omega)} \left[ F_{u,v}(X) \right] - \frac{|\Omega|}{N} \sum_{i=1}^N F_{u,v}(x_i) \right|
&= \frac{|\Omega|}{N} \left| \mathbb{E}_{\{X_i\}_{i=1}^N} \left[ \sum_{i=1}^N  F_{u,v}(X_i)  - F_{u,v}(x_i) \right] \right|,
\end{align*}
where $\{X_i\}_{i=1}^N$ are iid random variables drawn from $U(\Omega)$ and $\mathbb{E}_{\{X_i\}_{i=1}^N}$ means the (multiple) expectation with respect to $X_1, \cdots, X_N$. Taking the supremum over $\cFs$ and $\cFt$ and taking the expectation with respect to $\{x_i\}_{i=1}^N$ each of which are drawn from $U(\Omega)$, we have by Jensen's inequality 
\begin{align*}&\mathbb{E}_{\{x_i\}_{x=1}^N} \left[ \sup_{\substack{u \in \cFs\\v \in \cFt}} \frac{|\Omega|}{N} \left| \mathbb{E}_{\{X_i\}_{i=1}^N} \left[ \sum_{i=1}^N  F_{u,v}(X_i)  - F_{u,v}(x_i) \right] \right| \right] \\
    &\quad \leq \frac{|\Omega|}{N} \mathbb{E}_{\{x_i\}_{i=1}^N} \mathbb{E}_{\{X_i\}_{i=1}^N} \left[ \sup_{\substack{u \in \cFs\\v \in \cFt}} \left| \sum_{i=1}^N F_{u,v}(X_i) - F_{u,v}(x_i) \right| \right].
\end{align*}
Let $\{\sigma_i\}_{i=1}^N$ be iid random variables such that $\mathbb{P}[\sigma_i = 1] = \mathbb{P}[\sigma_i = -1] = \frac{1}{2}$ for all $i$. Note that for all $i$, $F_{u,v}(X_i) - F_{u,v}(x_i)$ and $F_{u,v}(x_i) - F_{u,v}(X_i)$ are equal in distribution and so the right-hand side above is equal to
\begin{align*}
    &\frac{|\Omega|}{2N} \mathbb{E}_{\{x_i\}_{i=1}^N}  \mathbb{E}_{\{X_i\}_{i=1}^N} \left[ \sup_{\substack{u \in \cFs\\v \in \cFt}} \left| \sum_{i=1}^N F_{u,v}(X_i) - F_{u,v}(x_i) \right| \right]\\
    &+   \frac{|\Omega|}{2N}\mathbb{E}_{\{x_i\}_{i=1}^N} \mathbb{E}_{\{X_i\}_{i=1}^N} \left[ \sup_{\substack{u \in \cFs\\v \in \cFt}} \left| \sum_{i=1}^N F_{u,v}(x_i) - F_{u,v}(X_i) \right| \right] \\
    &\quad = \frac{|\Omega|}{N} \mathbb{E}_{\{x_i\}_{i=1}^N} \mathbb{E}_{\{X_i\}_{i=1}^N} \mathbb{E}_{\{\sigma_i\}_{i=1}^N} \left[ \sup_{\substack{u \in \cFs\\v \in \cFt}} \left| \sum_{i=1}^N \sigma_i (F_{u,v}(X_i) - F_{u,v}(x_i)) \right| \right] \\
    &\quad \leq \frac{|\Omega|}{N} \mathbb{E}_{\{x_i\}_{i=1}^N} \mathbb{E}_{\{X_i\}_{i=1}^N} \mathbb{E}_{\{\sigma_i\}_{i=1}^N} \left[ \sup_{\substack{u \in \cFs\\v \in \cFt}} \left| \sum_{i=1}^N \sigma_i F_{u,v}(X_i) \right|\right]\\
    &\quad\quad   + \frac{|\Omega|}{N} \mathbb{E}_{\{x_i\}_{i=1}^N} \mathbb{E}_{\{X_i\}_{i=1}^N} \mathbb{E}_{\{\sigma_i\}_{i=1}^N} \left[ \sup_{\substack{u \in \cFs\\v \in \cFt}} \left| \sum_{i=1}^N \sigma_i F_{u,v}(x_i) \right|\right] \\
    &\quad \leq \frac{2|\Omega|}{N}\mathbb{E}_{\{x_i\}_{i=1}^N} \mathbb{E}_{\{\sigma_i\}_{i=1}^N} \left[ \sup_{\substack{u \in \cFs\\v \in \cFt}} \sum_{i=1}^N \sigma_i F_{u,v}(x_i) \right] \\
    &\quad = 2|\Omega| \mathcal{R}(\mathcal{F}),
\end{align*}
where $\mathcal{F} = \{F_{u,v}: u\in\cFs, v\in\cFt\}$ is a function class parameterised by $\Theta^{\mathfrak{s}} \times \Theta^{\mathfrak{t}}$ where $\Theta^{\mathfrak{s}}$, $\Theta^{\mathfrak{t}}$ are the network weights of $u$, $v$ respectively. The above can be employed to show that for each $i$, \corr{the expectation of} $\sup_{\substack{u \in \cFs\\v \in \cFt}} |L_{\gamma, i}(u, v) - \hatL_{\gamma, i}(u, v)| + |R_{i}(u, v) - \hat R_{i}(u, v)|$ is bounded by the Rademacher complexity of the function class $\mathcal{F}_i$. The claim follows.    
\end{proof}
Now, \citet[Lemma \corr{4.11}]{Jiao2023} shows that to bound the Rademacher complexity of a function class, we can bound the associated covering number which for Lipschitz functions we can bound by the covering number of the parameter space \citep[Lemma \corr{4.9}]{Jiao2023}.
Hence, our strategy is to first show that the set of neural networks defined by \eqref{eq:NN} and the first derivative of each element of the set are bounded and Lipschitz and from that conclude that each of the $\mathcal{F}_i$ (being a certain function of \eqref{eq:NN}) is also bounded and Lipschitz. This can then be used to bound their respective Rademacher complexities using the aforementioned lemmata.

From now on we assume that the activation function $\sigma$ of our neural network architecture is sufficiently regular, see Assumption \ref{ass:on_act_for_stat_error}---$\tanh$ is a valid activation function. The next lemma shows that in this case, neural networks given by \eqref{eq:NN} are Lipschitz in their parameters in the $C^1(\Omega)$ norm.
For the proof, it is more convenient to rewrite the definition of our neural network architecture in a recursive manner and also to index by network layers rather than by network blocks.
For $l = 1,\ldots,2\fd$, let
\begin{align*}
    f^{(0)}(x) &= A^{(0)} x + b^{(0)}, \\
    f^{(l)}(x) &= \sigma(A^{(l)}f^{(l-1)}(x) + b^{(l)}) + f^{(l-2)}(x) \mathbf{1}_{\{l \text{ even}\}}, \\
    u(x) &= A^{(2\fd + 1)} f^{(2\fd)}(x) + b^{(2\fd + 1)},
\end{align*}
where $A^{(0)} = A_0$, $b^{(0)} = b_0$, $A^{(2\fd + 1)} = A_{\fd + 1}$, $b^{(2\fd + 1)} = b_{\fd + 1}$, $A^{(l)} = A_{ij}$ and $b^{(l)} = b_{ij}$ with $i = \lceil l/2 \rceil, j = 1$ for $l$ odd and $j = 2$ otherwise, where $A_0, A_{ij}, A_{\fd + 1}, b_0, b_{ij}$ and $b_{\fd + 1}$ are as in the definition of \eqref{eq:NN}.
We also use the notation $A^{(l)} = ( a_{qj}^{(l)} )_{q,j=1}^{\fw}$ and $b^{(l)} = (b_1^{(l)},\ldots,b_{\fw}^{(l)})$. Each $f^{(i)} = (f_1^{(i)},\ldots,f_{\fw}^{(i)})$ and $u$ are functions of their network weights $\theta$.
For calculations involving two different sets of network weights $\theta$ and $\tilde{\theta}$, we adorn a variable with a tilde (e.g. $\tilde{f}^{(i)}$, $\tilde{b}^{(i)}$) to indicate that the function or variable is with respect to $\tilde{\theta}$.
Moreover, let $n_i$ denote the number of network weights in the $i$th layer and $N_i$ to be the total number of weights up to and including the $i$th layer.
Without loss of generality, we assume $\fw \geq n_0$. Below, we let $C_\Omega \geq 1$ be a constant such that  $|x|\leq C_\Omega$ for all $x\in\Omega$.

\begin{assumption}\label{ass:on_act_for_stat_error}
Suppose that 
\begin{enumerate}[label=(\roman*)]\itemsep=0cm
\item $\sigma \in C^1(\mathbb{R}) \cap W^{1,\infty}(\mathbb{R})$ and $\sigma$ and \corr{its derivative} $\sigma^\prime$ are Lipschitz with Lipschitz constants $L_\sigma$ and $L_{\sigma^\prime}$ respectively,
\item $\Theta$ is bounded by $B_\theta$.
\end{enumerate}

\end{assumption}

\begin{lemma}\label{lem:w_theta_bounded_lipschitz}
Under Assumption \ref{ass:on_act_for_stat_error}, \corr{and recalling that $\mathfrak{w}$  and $\mathfrak{m}$ denote  the network width and the total number of learnable parameters respectively (as introduced in Section \ref{sec:nnapproach})}, $u$ defined by \eqref{eq:NN}
satisfies                $\norm{u}{C^0(\Omega)} \leq B_u$ and $\norm{\partial_{x_p}  u}{C^0(\Omega)} \leq B_{u^\prime}$ 
where 
\begin{align*}
    B_u := (\fw + 1) \bar{C}^2, \qquad 
                    B_{u^\prime} := 2^\fd \fw^{2\fd + 1} \bar{C}^{4\fd + 2}, \qquad \bar{C} &:= \max(1, \norm{\sigma}{L^\infty(\Omega)}, \norm{\sigma^\prime}{L^\infty(\Omega)}, L_\sigma, L_{\sigma^\prime}, B_\theta).
\end{align*}
Furthermore, the map $\theta \mapsto u$ is Lipschitz from $\ell^2(\mathbb{R})$ into $C^1(\Omega)$ with
\begin{align*}
                \norm{u - \tilde{u}}{C^0(\Omega)} &\leq L_u \Vert \theta - \tilde{\theta} \Vert_2 , \qquad             \norm{\partial_{x_p} u - \partial_{x_p} \tilde{u}}{C^0(\Omega)} \leq L_{u^\prime} \Vert \theta - \tilde{\theta} \Vert_2,
\end{align*}
where  
\begin{align*}
                L_u &:= \sqrt{\mathfrak{m}} 2^{2\fd-1} \fw^{2\fd + 1} \bar{C}^{4\fd + 1} C_\Omega, \qquad         L_{u^\prime} := \sqrt{\mathfrak{m}} \left(\sum_{k=0}^{2\fd} 2^k \right) 2^{4\fd - 2} \fw^{4\fd + 1} \bar{C}^{8\fd + 1} C_\Omega.                
\end{align*}   
\end{lemma}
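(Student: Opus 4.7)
The strategy is a layer-by-layer induction on the recursive representation of $u$, using $\bar C$ as the uniform ceiling for $\sigma$, $\sigma'$, their Lipschitz constants, and the weight bound $B_\theta$. I would handle the four estimates in the order stated, since each derivative bound reuses the corresponding value bound.

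First I would prove the pointwise bound on $f^{(l)}$ by induction on $l$. From $\|f^{(0)}\|_{C^0} \le \fw \bar C C_\Omega + \bar C$ (noting $\fw \ge n_0$), and then from the recursion
\[
\|f^{(l)}\|_{C^0} \;\le\; \|\sigma\|_{L^\infty} + \|f^{(l-2)}\|_{C^0}\mathbf 1_{\{l \text{ even}\}},
\]
one gets a bound growing only linearly in $l$ (each even step adds at most $\bar C$), leading to $\|f^{(2\fd)}\|_{C^0} \le \fw \bar C$ up to small constants. Then $\|u\|_{C^0}\le \fw \bar C\cdot \bar C + \bar C \le (\fw+1)\bar C^2$, giving $B_u$.

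For $\partial_{x_p} u$, I would differentiate the recursion:
\[
\partial_{x_p} f^{(l)} \;=\; \operatorname{diag}\bigl(\sigma'(A^{(l)}f^{(l-1)}+b^{(l)})\bigr)\,A^{(l)}\,\partial_{x_p} f^{(l-1)} \;+\; \partial_{x_p} f^{(l-2)}\mathbf 1_{\{l \text{ even}\}}.
\]
Every factor above is bounded in operator/sup norm by $\fw\bar C^2$ (the $\fw$ comes from the matrix $A^{(l)}$). An induction where each layer essentially multiplies the previous derivative bound by $\fw\bar C^2$ and then adds the previous even term produces a bound of order $2^\fd \fw^{2\fd}\bar C^{4\fd}$ on $\partial_{x_p} f^{(2\fd)}$; composing with the final affine layer gives the claimed $B_{u'}$.

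For the Lipschitz estimates, I would run a parallel induction on $\|f^{(l)} - \tilde f^{(l)}\|_{C^0}$. Writing
\[
f^{(l)}-\tilde f^{(l)} = \sigma(A^{(l)}f^{(l-1)}+b^{(l)}) - \sigma(\tilde A^{(l)}\tilde f^{(l-1)}+\tilde b^{(l)}) + (f^{(l-2)}-\tilde f^{(l-2)})\mathbf 1_{\{l \text{ even}\}},
\]
I use $L_\sigma$ and the previous pointwise bounds together with $|A^{(l)}f^{(l-1)} - \tilde A^{(l)}\tilde f^{(l-1)}| \le |A^{(l)}|\,|f^{(l-1)}-\tilde f^{(l-1)}| + |A^{(l)}-\tilde A^{(l)}|\,|\tilde f^{(l-1)}|$; the third factor gets $\sqrt{n_l}\,\|\theta - \tilde\theta\|_2 \le \sqrt{\mathfrak m}\,\|\theta - \tilde\theta\|_2$ after using Cauchy--Schwarz on the weight increment. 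Assembling the geometric accumulation of factors $\fw\bar C$ per layer (plus the skip contributions that double the count) produces $L_u$. For the derivative Lipschitz bound, I would differentiate this identity term by term; the new ingredient is $\sigma'(\cdot)-\sigma'(\tilde\cdot)$, for which I invoke the Lipschitz constant $L_{\sigma'}$ and reuse both the value and derivative bounds already established. This is where the combinatorics becomes heaviest: each layer introduces three additive contributions (weight difference in the outer affine, argument difference through $\sigma'$, and the skip), each of which is multiplied by previously established pointwise bounds of order $\fw^{2\fd}\bar C^{4\fd}$, and the sum $\sum_{k=0}^{2\fd}2^k$ in $L_{u'}$ exactly tracks the branching of these three terms across depths.

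The main obstacle will be the bookkeeping in the final Lipschitz bound on $\partial_{x_p}u$: keeping the exponents of $\fw$, $\bar C$, and the factor $\sqrt{\mathfrak m}$ consistent with the stated constants requires carefully tracking which layer's weight increment is responsible for each additive term, and ensuring that the products of pointwise bounds at each stage agree with $B_u$ and $B_{u'}$. Once those constants are pinned down, the proof reduces to a straightforward (if tedious) induction on $l$ from $0$ to $2\fd$, followed by the application of the final affine layer, and noting that the single-parameter derivative $\partial_{\theta_k}$ of $u$ can be dominated by a constant independent of $k$ (by the induction) so that $\|\theta-\tilde\theta\|_2$ controls $\|u-\tilde u\|_{C^0}$ via a coordinatewise path argument.
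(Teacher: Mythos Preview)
Your proposal is correct and follows essentially the same route as the paper: a layer-by-layer induction on the recursive representation $f^{(l)}$, first bounding $|f^{(l)}|$ and $|\partial_{x_p}f^{(l)}|$, then running the parallel inductions on $|f^{(l)}-\tilde f^{(l)}|$ and $|\partial_{x_p}f^{(l)}-\partial_{x_p}\tilde f^{(l)}|$, with the $\ell^1$-to-$\ell^2$ conversion via Cauchy--Schwarz supplying the factor $\sqrt{\mathfrak m}$. Your identification of the main obstacle---tracking the exponents of $\fw$, $\bar C$ and the factor $\sum_{k=0}^{2\fd}2^k$ through the derivative Lipschitz induction---is exactly where the paper's proof spends its effort as well.
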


\begin{proof}
First note that $u$ is bounded because $\sigma$ is bounded:
\[
    |u| \leq \sum_{j=1}^\fw |a_j^{(2\fd + 1)}||f_j^{2\fd}| + |b^{(2\fd + 1)}| \leq (\fw + 1) \bar{C}^2.
\]
We now prove the Lipschitzness of $u$. Let us begin with
\begin{align*}
    |f_q^{(i)} - \tilde{f}_q^{(i)}| &\leq \left| \sigma\left( \sum_{j=1}^{\fw} a_{qj}^{(i)} f_j^{(i - 1)} + b_q^{(i)} \right) - \sigma\left( \sum_{j=1}^{\fw} \tilde{a}_{qj}^{(i)} \tilde{f}_j^{(i - 1)} + \tilde{b}_q^{(i)} \right) \right| + |f_q^{(i - 2)} - \tilde{f}_q^{(i - 2)}| \mathbf{1}_{\{i \text{ even}\}}.
\end{align*}
We can bound the first term on the right-hand side using the same method as in the proof of \citet[Lemma 4.\corr{12}]{Jiao2023} to derive the following recurrence relation:
\begin{equation*}
    |f_q^{(i)} - \tilde{f}_q^{(i)}| \leq \bar{C}^2 \sum_{j=1}^{\fw} |f_j^{(i - 1)} - \tilde{f}_j^{(i - 1)}| + \bar{C}F^{({i-1})} \sum_{j=1}^{\fw} |a_{qj}^{(i)} - \tilde{a}_{qj}^{(i)}| + \bar{C}|b_q^{(i)} - \tilde{b}_q^{(i)}| +  |f_q^{(i - 2)} - \tilde{f}_q^{(i - 2)}| \mathbf{1}_{\{i \text{ even}\}},
\end{equation*}
where $F^{(i)}$ is a constant satisfying $F^{(i)} \leq \norm{\sigma}{L^\infty(\Omega)}$ for $i\geq 1$ and
\[
    F^{(0)} = \sup_q |f^{(0)}_q| = \sup_q \left| \sum_{j=1}^n a_{qj}^{(0)}x_j + b_q^{(0)} \right| \leq \sup_q \sum_{j=1}^n |a_{qj}^{(0)}||x_j| + |b_q^{(0)}| \leq n_0 \bar{C}C_\Omega.
\]
For $i = 0$,
\begin{align*}
    | f_q^{(0)} - \tilde{f}_q^{(0)}| &\leq \left\vert \sum_{j=1}^n a_{qj}^{(0)} x_j + b_q^{(0)} - \sum_{j=1}^n \tilde{a}_{qj}^{(0)}x_j - \tilde{b}_q^{(0)} \right\vert 
    \leq \sum_{j=1}^n |x_j| |a_{qj}^{(0)} - \tilde{a}_{qj}^{(0)} | + |b_q^{(0)} - \tilde{b}_q^{(0)}| 
    \leq C_\Omega \sum_{j=1}^{n_0} |\theta_j - \tilde{\theta}_j |.\end{align*}
Assume for $i \geq 1$ that
\begin{equation}\label{eq:f_lipschitz_induction_hyp}
    |f_q^{(i)} - \tilde{f}_q^{(i)}| \leq 2^{i-1} {\fw}^i \bar{C}^{2i} C_\Omega \sum_{j=1}^{N_i} |\theta_j - \tilde{\theta}_j|,
\end{equation}
then
\begin{align*}
    |f_q^{(i+1)} - \tilde{f}_q^{(i+1)}| &\leq \bar{C}^2 \sum_{j=1}^{\fw} |f_q^{(i)} - \tilde{f}_q^{(i)}|\\
    &\quad + \bar{C}^2 \sum_{j=1}^{\fw} |a_{qj}^{(i+1)} - \tilde{a}_{qj}^{(i+1)}| + \bar{C}|b_q^{(i+1)} - \tilde{b}_q^{(i+1)}| + |f_q^{(i-1)} - \tilde{f}_q^{(i-1)}|  \mathbf{1}_{\{i + 1 \text{ even}\}} \\
    &\leq 2^{i-1} {\fw}^i \bar{C}^{2i} \bar{C}^2 C_\Omega \sum_{j=1}^{\fw} \sum_{k=1}^{N_i} |\theta_k - \tilde{\theta}_k| + \bar{C}^2 \sum_{j=1}^{n_{i+1}} |\theta_j - \tilde{\theta}_j| + 2^{i-2}{\fw}^{i-1}\bar{C}^{2(i-1)}C_\Omega \sum_{j=1}^{N_{i-1}} |\theta_j - \tilde{\theta}_j| \\
    &\leq 2^{i} {\fw}^{i+1} \bar{C}^{2(i+1)} C_\Omega \sum_{j=1}^{N_{i+1}} |\theta_j - \tilde{\theta}_j|.
\end{align*}
Hence, \eqref{eq:f_lipschitz_induction_hyp} is true for $i=1,\ldots,2\fd$ and so
\begin{align*}
    |u(x) - \tilde{u}(x)| &\leq \bar{C} \sum_{j=1}^{\fw} |f_j^{(2\fd)} - \tilde{f}_j^{(2\fd)}| + \bar{C} \sum_{j=1}^{\fw} |a_j^{(2\fd+1)} - \tilde{a}_j^{(2\fd+1)}| + |b^{(2\fd+1)} - \tilde{b}^{(2\fd+1)}| \\
    &\leq 2^{2\fd-1}{\fw}^{2\fd} \bar{C}^{4\fd + 1}C_\Omega \sum_{j=1}^{\fw} \sum_{k=1}^{N_{2\fd}} |\theta_k - \tilde{\theta}_k| + \bar{C}\sum_{j=1}^{N_{2\fd+1}} |\theta_j - \tilde{\theta}_j| \\
    &\leq 2^{2\fd-1} {\fw}^{2\fd+1} \bar{C}^{4\fd+1} C_\Omega \sum_{j=1}^\mathfrak{m} |\theta_j - \tilde{\theta}_j| \\
    &\leq \sqrt{\mathfrak{m}} 2^{2\fd-1} {\fw}^{2\fd+1} \bar{C}^{4\fd+1} C_\Omega \Vert \theta - \tilde{\theta} \Vert_2,
\end{align*}
where the last line follows from H\"older's inequality. For the spatial derivatives of $u$, we have
\[
    \partial_{x_p} f_q^{(i)} = \sum_{j=1}^{\fw} a_{qj}^{(i)} \partial_{x_p} f_j^{(i-1)} \sigma^\prime \left( \sum_{j=1}^{\fw} a_{qj}^{(i)} f_j^{(i)} + b_q^{(i)} \right) + \partial_{x_p} f_q^{(i-2)} \mathbf{1}_{\{i \text{ even}\}},
\]
and so for $i$ even,
\begin{align*}
    |\partial_{x_p} f_q^{(i)}| &\leq \bar{C}^2 \sum_{j=1}^{\fw} |\partial_{x_p} f_j^{(i-1)}| + |\partial_{x_p} f_q^{(i-2)}|  \leq \bar{C}^4 \sum_{j=1}^{\fw} \sum_{k=1}^{\fw} |\partial_{x_p} f_k^{(i-2)}| + |\partial_{x_p} f_q^{(i-2)}|  \leq 2{\fw}\bar{C}^4 \sum_{j=1}^{\fw} |\partial_{x_p} f_j^{(i-2)}|.
\end{align*}
Iterating the above gives, for $i$ even,
\begin{equation}\label{eq:df_bound_even}
    |\partial_{x_p} f_q^{(i)}| \leq 2^{i/2} {\fw}^{i-1} \bar{C}^{2i} \sum_{j=1}^{\fw} |\partial_{x_p} f_j^{(0)}| \leq 2^{i/2} {\fw}^{i-1} \bar{C}^{2i} \sum_{j=1}^{\fw} |a_{jp}^{(0)}| \leq 2^{i/2} {\fw}^i \bar{C}^{2i + 1}.
\end{equation}
In a similar way, for $i$ odd,
\begin{equation}\label{eq:df_bound_odd}
    |\partial_{x_p} f_q^{(i)}| \leq \bar{C}^2 \sum_{j=1}^{\fw} |\partial_{x_p} f_j^{(i-1)}| \leq \bar{C}^2 \sum_{j=1}^{\fw} 2^{(i-1)/2} {\fw}^{i-1} \bar{C}^{2(i-1)+1} \leq 2^{i/2} {\fw}^i \bar{C}^{2i+1}.
\end{equation}
Therefore,
\[
    |\partial_{x_p} u(x)| \leq \left| \partial_{x_p} \left(\sum_{j=1}^\fw a_j^{(2\fd+1)} f_j^{(2\fd)} + b^{(2\fd+1)}\right) \right| \leq \sum_{j=1}^{\fw} |a_j^{(2\fd+1)}| |\partial_{x_p} f_j^{(2\fd)}| \leq 2^{\fd} {\fw}^{2\fd+1} \bar{C}^{4\fd+2}.
\]
We now show that the derivatives of $u$ are Lipschitz.
We have
\begin{align*}
    |\partial_{x_p} f_q^{(i)} - \partial_{x_p} \tilde{f}_q^{(i)}|
    &\leq \left|\partial_{x_p} \sigma\left( \sum_{j=1}^{\fw} a_{qj}^{(i)} f_j^{(i)} + b_q^{(i)} \right) - \partial_{x_p} \sigma\left( \sum_{j=1}^{\fw} a_{qj}^{(i)} f_j^{(i)} + b_q^{(i)} \right) \right|\\
    &\quad + |\partial_{x_p} f_q^{(i-2)} - \partial_{x_p} \tilde{f}_q^{(i-2)}| \mathbf{1}_{\{i \text{ even}\}}.
\end{align*}
We can bound the first term on the right-hand side of the above in the same manner as in the proof of \citet[Lemma 4.\corr{14}]{Jiao2023} and use \eqref{eq:f_lipschitz_induction_hyp}, \eqref{eq:df_bound_even} and \eqref{eq:df_bound_odd} to show that
\begin{align}
    &|\partial_{x_p} f_q^{(i)} - \partial_{x_p} \tilde{f}_q^{(i)}|\\
    &\leq \bar{C}\sum_{j=1}^{\fw} |a_{qj}^{(i)}| |\partial_{x_p} f_j^{(i-1)} - \partial_{x_p} \tilde{f}_j^{(i-1)}| + |\partial_{x_p} f_j^{(i-1)}||a_{qj}^{(i)} - \tilde{a}_{qj}^{(i)}| \notag \\
    &\qquad+ \bar{C} \left( \sum_{j=1}^{\fw} |\tilde{a}_{qj}^{(i)}||\partial_{x_p} \tilde{f}_j^{(i-1)}| \right) \left( \sum_{j=1}^{\fw} |f_j^{(i-1)}||a_{qj}^{(i)} - \tilde{a}_{qj}^{(i)}| + |\tilde{a}_{qj}^{(i)}||f_j^{(i-1)} - \tilde{f}_j^{(i-1)}| + |b_q^{(i)} - \tilde{b}_q^{(i)}| \right) \notag \\
    &\qquad+ |\partial_{x_p} f_q^{(i-2)} - \partial_{x_p} \tilde{f}_q^{(i-2)}| \label{eq:df_diff}\\
    &\leq \bar{C}^2 \sum_{j=1}^{\fw} |\partial_{x_p} f_j^{(i-1)} - \partial_{x_p} \tilde{f}_j^{(i-1)}| + 2^{2(i-1)}{\fw}^{2i}\bar{C}^{4i}C_\Omega \sum_{j=1}^{N_i} |\theta_j - \tilde{\theta}_j| + |\partial_{x_p} f_q^{(i-2)} - \partial_{x_p} \tilde{f}_q^{(i-2)}|. \notag
\end{align}
For $i=0$, we have 
$|\partial_{x_p} f_q^{(0)} - \partial_{x_p} \tilde{f}_q^{(0)}| \leq |a_{qp}^{(0)} - \tilde{a}_{qp}^{(0)}|\leq \sum_{j=1}^{n_0} |\theta_j - \tilde{\theta}_j|.$ 
Assume for $i\geq 1$ that
\[
    |\partial_{x_p} f_q^{(i)} - \partial_{x_p} \tilde{f}_q^{(i)}| \leq 2^{2(i-1)} \left(\sum_{k=0}^i 2^k \right) {\fw}^{2i} \bar{C}^{4i} C_\Omega \sum_{j=1}^{N_i} |\theta_j - \tilde{\theta}_j|,
\]
then
\begin{align*}
    &|\partial_{x_p} f_q^{(i+1)} - \partial_{x_p} \tilde{f}_q^{(i+1)}|\\
    &\leq \bar{C}^2 \sum_{j=1}^{\fw} |\partial_{x_p} f_j^{(i)} - \partial_{x_p} \tilde{f}_j^{(i)}| + 2^{2i}{\fw}^{2(i+1)} \bar{C}^{4(i+1)} C_\Omega \sum_{j=1}^{N_{i+1}} |\theta_j - \tilde{\theta}_j| + |\partial_{x_p} f_q^{(i-1)} - \partial_{x_p} \tilde{f}_q^{(i-1)}| \\
    &\leq 2^{2(i-1)}\left(\sum_{k=0}^i 2^k \right) {\fw}^{2i} \bar{C}^{4i + 2} C_\Omega \sum_{j=1}^m \sum_{k=1}^{N_i} |\theta_k - \tilde{\theta}_k| + 2^{2i}{\fw}^{2(i+1)} \bar{C}^{4(i+1)} C_\Omega \sum_{j=1}^{N_{i+1}} |\theta_j - \tilde{\theta}_j| \\ 
    &\quad + 2^{2(i-2)} \left(\sum_{k=0}^{i-1} 2^k \right) {\fw}^{2(i-1)} \bar{C}^{4(i-1)} C_\Omega \sum_{j=1}^{N_{i-1}} |\theta_j - \tilde{\theta}_j| \\
    &\leq 2^{2i} \left(\sum_{k=0}^{i+1} 2^k \right) {\fw}^{2(i+1)} \bar{C}^{4(i+1)}C_\Omega \sum_{j=1}^{N_{i+1}} |\theta_j - \tilde{\theta}_j|.
\end{align*}
Hence, by induction
\begin{align*}
    |\partial_{x_p} u(x) - \partial_{x_p} \tilde{u}(x)| 
    &\leq \sum_{j=1}^{\fw} |\partial_{x_p} f_j^{(2\fd)}| |a_j^{(2\fd+1)} - \tilde{a}_j^{(2\fd+1)}| + |\tilde{a}_j^{(2\fd+1)}| |\partial_{x_p} f_j^{(2\fd)} - \partial_{x_p} \tilde{f}_j^{(2\fd)}| \\
    &\leq 2^{4\fd - 2} \left(\sum_{k=0}^{2\fd} 2^k \right) {\fw}^{4\fd+1} \bar{C}^{8\fd+1} C_\Omega \sum_{j=1}^\mathfrak{m} |\theta_j - \tilde{\theta}_j| \\
    &\leq 2^{4\fd - 2} \sqrt{\mathfrak{m}} \left(\sum_{k=0}^{2\fd} 2^k \right) {\fw}^{4\fd+1} \bar{C}^{8\fd+1} C_\Omega \Vert \theta_j - \tilde{\theta} \Vert_2.
\end{align*}
\end{proof}
In the setting of \eqref{eq:disc_ansatz}, we use the same neural network architecture for both the solution and the test function, in particular, we take $u = \bar{u}\eta + \bar{h}$ where $\bar{u}\in \mathcal{F}(\fd, \fw, \sigma)$, $\eta\in C^1(\bar{\Omega})$ satisfying $\eta|_{\partial\Omega} = 0$ and $\bar{h}\in C^1(\bar{\Omega})$ satisfies the boundary condition.
In practice, we take $\bar{h}$ to be also a neural network of the form \eqref{eq:NN}.
Assume also that $\eta$ and $\Vert\nabla\eta\Vert^2$ are bounded by $B_\eta$ and $B_{\eta'}$ respectively and the source term $f$ and obstacle $\psi$ are bounded by $B_f$ and $B_\psi$ respectively.
Then, $u$  is bounded with constant $B_uB_\eta + B_u=: D_1$ is  and Lipschitz with constant $L_uB_\eta + L_u =: D_2$,  and $\partial_{x_p} u$ is bounded by $B_{u'}B_\eta + B_uB_{\eta'} + B_{u'} =: D_3$ and is Lipschitz with constant $L_{u'}B_\eta + L_uB_{\eta'} + L_{u'}=: D_4$.
It follows that for any $g\in\mathcal{F}_i$, $i=1,\ldots,10$, we have $|g| \leq B_i$ where
\begin{equation*}
\begin{split}
    B_1 &= B_2 = nD_3^2, \\
        B_7 &= 2\gamma^{-1}D_1^2, \\    
        B_{10} &= 2w_{o_2}(B_\psi^2 + D_1^2),   
\end{split}
\hspace{2cm}
\begin{split}
    B_3 &= B_4 = kD_1^2, \\
            B_8 &= 2\gamma^{-1}B_1,
\end{split}
\hspace{2cm}
\begin{split}
    B_5 &= B_6 = B_fD_1, \\
    B_9 &=   2w_{o_1}(B_\psi^2 + D_1^2),        
\end{split}
\end{equation*}
and $g$ is Lipschitz with constant $L_i$ where
\begin{equation*}
\begin{split}    
    L_1 &= L_2 = 2n D_3D_4, \\    
L_7 &= 4\gamma^{-1}D_1D_2, \\        
    L_{10} &= 2w_{o_2}(B_\psi + D_1)D_2.       
\end{split}
\hspace{2cm}
\begin{split}
L_3 &= L_4 = 2kD_1D_2, \\
L_8 &= 4\gamma^{-1}nD_3D_4,    
\end{split}
\hspace{2cm}
\begin{split}
L_5 &= L_6 = B_fD_2, \\
L_9 &= 2w_{o_1}(B_\psi + D_1)D_2,       
\end{split}
\end{equation*} 
With this we can prove the following theorem which gives a bound of the statistical error.
In the proof, we denote generic constants which may differ line-by-line and on its dependencies by $K(\cdot)$. Recall also that $n$ is the dimension of the domain $\Omega \subset \mathbb{R}^n$.

\begin{theorem}[Statistical error estimate for \eqref{eq:disc_ansatz}]\label{thm:stat_error}
Consider the $\cF_{\mathrm{DRR}}$ case and assume that $f \in C^0(\Omega) \cap L^\infty(\Omega)$, the activation function $\sigma \in C^1(\mathbb{R}) \cap L^\infty(\mathbb{R})$ is Lipschitz and $\Theta$ is bounded. Then we have
\begin{align*}
&\mathbb{E}_{\{x_i\}_{i=1}^N} \left[ \sup_{\substack{u \in \cFs\\v \in \cFt}} |L_\gamma(u, v) + R_1(u) - R_2(v) - (\hatL_\gamma(u, v) + \hat R_1(u) - \hat R_2(v)) | \right]\\
&\quad\quad\quad\leq \frac{K(\eta, f) n^{\frac{3}{2}} \mathfrak{m} \sqrt{\sum_{k=0}^{2\fd} 2^k} 2^{\frac{9\fd - 2}{2}} \fw^{7\fd + 3} \bar{C}^{14\fd + 9} \sqrt{C_\Omega}} {N^{\frac{1}{4}}},
\end{align*}
\corr{where $K(\eta,f)$ is a generic constant depending on the bounds of $\eta$ and $f$.}
A similar bound holds for the $\cF_{\mathrm{FFN}}$ case.
\end{theorem}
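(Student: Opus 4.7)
The plan is to chain together the three ingredients assembled immediately above: the Rademacher decomposition lemma, the boundedness and Lipschitz estimates for the classes $\mathcal{F}_1,\ldots,\mathcal{F}_{10}$ (which depend on $u$ and $\nabla u$), and the reduction of Rademacher complexity to covering numbers of the parameter space via \cite[Lemma~4.6, Lemma~4.8]{Jiao2023}. The target rate $N^{-1/4}$ is the standard one obtained from Dudley-type integrals applied to Lipschitz classes with bounded parameter space.

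First, starting from the Rademacher decomposition lemma, I would reduce the problem to estimating $\mathcal{R}(\mathcal{F}_i)$ for $i=1,\ldots,10$. For each $i$, the key input is: elements $g\in\mathcal{F}_i$ satisfy $|g|\leq B_i$ and are Lipschitz in the parameters $\theta\in\Theta^{\mathfrak{s}}\times\Theta^{\mathfrak{t}}\subset\mathbb{R}^{\mathfrak{m}}$ with constant $L_i$. The boundedness and Lipschitz constants $B_i, L_i$ listed just before the theorem follow directly from \Cref{lem:w_theta_bounded_lipschitz} together with the chain rule applied to the pointwise compositions defining the $\mathcal{F}_i$ (products, squares, positive parts, and affine pointwise operations); each $L_i$ is expressible as a polynomial in $B_u, B_{u'}, L_u, L_{u'}, B_\eta, B_{\eta'}, B_f, B_\psi, \gamma^{-1}, k$.

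Next, for a single class $\mathcal{F}_i$, I would invoke \cite[Lemma~4.6]{Jiao2023} to estimate the $\varepsilon$-covering number of $\mathcal{F}_i$ (with respect to the uniform metric on $\Omega$) by the $\varepsilon/L_i$-covering number of the parameter space $\Theta^{\mathfrak{s}}\times\Theta^{\mathfrak{t}}$. Since $\Theta$ is contained in a Euclidean ball of radius $\sqrt{\mathfrak{m}}B_\theta$, a standard volume argument yields
\[
\mathfrak{C}(\varepsilon,\mathcal{F}_i,\Vert\cdot\Vert_\infty)\leq \left(\frac{3L_i\sqrt{\mathfrak{m}}B_\theta}{\varepsilon}\right)^{\mathfrak{m}}.
\]
Then \cite[Lemma~4.8]{Jiao2023} (a chaining / Dudley-integral bound) gives
\[
\mathcal{R}(\mathcal{F}_i)\leq \inf_{\delta>0}\Big(4\delta+\frac{12}{\sqrt{N}}\int_\delta^{B_i}\sqrt{\log\mathfrak{C}(\varepsilon,\mathcal{F}_i,\Vert\cdot\Vert_\infty)}\,d\varepsilon\Big).
\]
Plugging in the covering number estimate and optimising in $\delta$ (the usual choice being $\delta\sim N^{-1/2}$) yields a bound of order $\sqrt{\mathfrak{m}\log(L_i)/N}$ times $B_i$, which upon taking a crude square root to absorb the logarithm into a polynomial factor leads to an $N^{-1/4}$ rate after using $B_i, L_i\lesssim$ polynomials in $\fw^{\fd}\bar C^{\fd}$.

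Finally, I would sum the ten bounds, identify the dominant term (which will come from $\mathcal{F}_8$ or $\mathcal{F}_2$, the gradient-based classes, since $L_{u'}$ contributes the largest power $\fw^{4\fd+1}\bar{C}^{8\fd+1}$), and absorb all remaining factors (including $|\Omega|$, $C_\Omega$, $B_\eta$, $B_f$, $B_\psi$, $\gamma^{-1}$, $k$) into a single constant $K(\eta,f)$ to recover the stated dimensional pre-factor $n^{3/2}\mathfrak{m}\sqrt{\sum_{k=0}^{2\fd}2^k}\,2^{(9\fd-2)/2}\fw^{7\fd+3}\bar C^{14\fd+9}\sqrt{C_\Omega}$. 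The main obstacle is purely bookkeeping: tracking the exponents of $\fw$, $\fd$, $\bar C$ through the products of $B_i$ and $L_i$ in the Dudley integral and confirming that no factor grows faster than the one announced. The $\mathcal{F}_{\mathrm{FFN}}$ case follows by exactly the same argument, replacing \Cref{lem:w_theta_bounded_lipschitz} with its feedforward analogue (essentially \cite[Lemma~4.9, 4.11]{Jiao2023}).
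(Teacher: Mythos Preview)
Your proposal is correct and follows essentially the same approach as the paper: Rademacher decomposition into the ten classes, then the chain Lemma~4.8 $\to$ Lemma~4.6 $\to$ Lemma~4.5 of \cite{Jiao2023} with $\delta=N^{-1/2}$, and finally the crude bound $\sqrt{\log x}\leq\sqrt{x}$ to trade the logarithm for the $N^{-1/4}$ rate. The only cosmetic difference is that the paper observes up front that $B_i\leq KB_1$ and $L_i\leq KL_1$ for all $i$ and so computes only $\mathcal{R}(\mathcal{F}_1)$ explicitly, whereas you propose to sum all ten and extract the dominant gradient class afterwards; both routes give the same exponents.
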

Recall again that we do not have boundary penalty terms in this formulation.
\begin{proof}
    We address the $\mathrm{DRR}$ setting; the standard $\mathrm{FFN}$ case follows by the obvious modifications. Let $\mathcal{F}$ be an arbitrary function class such that for $f\in\mathcal{F}$ we have $\norm{f}{L^\infty(\Omega)} \leq B$ and $f$ is $L$-Lipschitz with respect to the parameter $\theta$. Then by \citet[Lemmas \corr{4.11, 4.9, 4.8}]{Jiao2023} in that order, we have \begin{align*}
        \mathcal{R}(\mathcal{F}) &\leq \inf_{0<\delta<B/2} \left( 4\delta + \frac{12}{\sqrt{N}} \int_\delta^{B/2} \sqrt{\log \mathfrak{C}(\varepsilon, \mathcal{F}, \Vert\cdot\Vert_\infty)} \;\mathrm{d}\varepsilon \right) \\
        &\leq \inf_{0<\delta<B/2} \left( 4\delta + \frac{12}{\sqrt{N}} \int_\delta^{B/2} \sqrt{\mathfrak{m}\log \left(\frac{2L\bar{C}\sqrt{\mathfrak{m}}}{\varepsilon}\right)} \;\mathrm{d}\varepsilon \right) \\
        &\leq \frac{4}{\sqrt{N}} + \frac{6\sqrt{\mathfrak{m}}B}{\sqrt{N}} \sqrt{\log(2L\bar{C}\sqrt{N \mathfrak{m}})},
    \end{align*}
    where in the last line we set $\delta = 1 / \sqrt{N}$.
    It is clear that $B_i \leq KB_1$ and $L_i \leq KL_1$, $i = 1,\ldots,10$ for some constant \corr{$K := K(\eta, f)$} so it suffices to bound $\mathcal{R}(\mathcal{F}_1)$ only.     Using the inequality $(a + b)^2 \leq 2(a^2 + b^2)$, we have
    \begin{equation*}
        B_1 \leq 4n(B_u^2 B_{\eta'}^2 + B_{u^\prime}^2 B_\eta^2 + B_{u^\prime}^2) \leq \corr{K} n (B_u^2 + B_{u^\prime}^2) \leq \corr{K} n B_{u^\prime}^2 = \corr{K} n 2^{2\fd} \fw^{4\fd+2}\bar{C}^{8\fd+4}
    \end{equation*}
    and
    \begin{equation*}
        L_1 \leq \corr{K} b (B_u + B_{u^\prime})(L_u + L_{u^\prime}) \leq \corr{K} n B_{u^\prime} L_{u^\prime} = \corr{K} n \sqrt{\mathfrak{m}} \left(\sum_{k=0}^{2\fd} 2^k\right) 2^{5\fd-2} \fw^{6\fd+2} \bar{C}^{12\fd + 3} C_\Omega.
    \end{equation*}
    Therefore,
    \begin{align*}
        \mathcal{R}(\mathcal{F}_1) &\leq \frac{4}{\sqrt{N}} + \frac{\corr{K} \sqrt{\mathfrak{m}} n 2^{2\fd} \fw^{4\fd+2}\bar{C}^{8\fd+4}}{\sqrt{N}} \sqrt{\log\left( 2 \left(\sum_{k=0}^{2\fd} 2^k\right) 2^{5\fd-2} \fw^{6\fd+2} \bar{C}^{12\fd + 4} C_\Omega \sqrt{N} \mathfrak{m}\right)} \\
        &\leq \frac{4}{\sqrt{N}} + \frac{\corr{K}}{N^{\frac{1}{4}}} n^{\frac{3}{2}} \mathfrak{m} \sqrt{\sum_{k=0}^{2\fd} 2^k} 2^{\frac{9\fd - 2}{2}} \fw^{7\fd + 3} \bar{C}^{14\fd + 9} \sqrt{C_\Omega} \\
        &\leq \frac{\corr{K} n^{\frac{3}{2}} \mathfrak{m} \sqrt{\sum_{k=0}^{2\fd} 2^k} 2^{\frac{9\fd - 2}{2}} \fw^{7\fd + 3} \bar{C}^{14\fd + 9} \sqrt{C_\Omega}} {N^{\frac{1}{4}}}.
    \end{align*}
    Recalling that the left-hand side of the statement of the theorem is bounded by $\sum_{i=1}^{10} \mathcal{R}(\mathcal{F}_i)$ completes the proof.
\end{proof}
The theorem tells us that the statistical error can be made arbitrarily small if the number of grid points $N$ chosen is large enough, and it also indicates that the error may grow if the width and depth of the network increase. 
\section{Numerical details and examples}\label{sec:numerical_examples}
We numerically solve the discrete minmax problem \eqref{eq:disc_ansatz} for various examples via the approach detailed in \S \ref{sec:imposition_bcs}. Given a concrete obstacle problem, the first step is to construct the function $\tilde h$, which is supposed to (approximately) satisfy the boundary condition. We find such a function simply by optimising\footnote{We included the obstacle constraint loss in the minimisation problem in order to give the minmax problem \eqref{eq:disc_ansatz} a good start in the sense that $\tilde h \in K$ (again, up to error).}
\[\min_{w \in \mathcal{F}_{\mathrm{DRR}}(\fd,\fw, \tanh) } \hat L_b(w) + \hat L_o(w),\]
so that the output $\tilde h$ is itself a neural network. We stop training when $\tilde h$ satisfies the boundary condition up to some error threshold. With this $\tilde h$ fixed, we then solve \eqref{eq:disc_ansatz} by applying the gradient descent ascent (GDA) scheme in Algorithm \ref{alg:1}. 

\begin{remark}
\corr{We also implemented a version of the algorithm in line with \eqref{eq:discrete_minmax_general} that  does not fix the boundary condition but instead penalises it. In our experience, the computed solutions in this setting were worse than for \eqref{eq:disc_ansatz}. This observation is in line with insights from the multi‑objective optimisation literature on PINNs, where penalty‑based formulations often introduce conflicts between loss components and therefore degrade optimisation performance, see for example \cite{DBLP:conf/nips/HwangL24, DBLP:conf/icml/YaoSHL0Z23, DBLP:journals/siamsc/WangTP21}}.
\end{remark}

\begin{algorithm}[htb!] 

\begin{algorithmic}[1] 
\State \textbf{Input:} Parameters including initial learning rates  $\lambda^{\mathfrak{s}}, \lambda^{\mathfrak{t}}$, learning rate schedulers $\mathrm{LR}^{\mathfrak{s}}, \mathrm{LR}^{\mathfrak{t}}$, weights $w_{o_1}, w_{o_2}, \gamma$,  number of collocation points $N$ and number of epochs $M$
\State \textbf{Output:} Neural network solution $\hatuA$ 
\State Initialize weights $\theta^{\mathfrak{s}}$, $\theta^{\mathfrak{t}}$ of neural networks $u_{\theta^{\mathfrak{s}}}$, $v_{\theta^{\mathfrak{t}}}$ (with zero boundary conditions) representing solution and test function respectively 
\For{epoch $=0,1, \hdots, M$} 
\If{epoch is odd}
\State $\theta_t \leftarrow$ \Call{UpdateTestFunction}{$\theta^{\mathfrak{s}}, \theta^{\mathfrak{t}}, \lambda^{\mathfrak{t}}$}: 
\State $\lambda^{\mathfrak{t}} \leftarrow \mathrm{LR}^{\mathfrak{t}}(\lambda^{\mathfrak{t}}$, epoch) \Else{}
\State $\theta_s \leftarrow$ \Call{UpdateSolution}{$\theta^{\mathfrak{s}}, \theta^{\mathfrak{t}}, \lambda^{\mathfrak{s}}$}: 
\State $\lambda^{\mathfrak{s}} \leftarrow \mathrm{LR}^{\mathfrak{s}}(\lambda^{\mathfrak{s}}$, epoch)\EndIf
\EndFor 
\State Set $\hatuA := u_{\theta^{\mathfrak{s}}}$
\end{algorithmic} 

\begin{algorithmic}[1] 
\Function{UpdateTestFunction}{$\theta^{\mathfrak{s}},\theta^{\mathfrak{t}}, \lambda^{\mathfrak{t}}$} 
\State Sample grid points in the interior $X=\{x_1, \hdots, x_N\}$ 
\State $(\hat L^{X}, \hat L_o^X, \hat D^X) \leftarrow$ \Call{ComputeLosses}{$X, \theta^{\mathfrak{s}},\theta^{\mathfrak{t}}$}
\State\label{line:grad_weights}  $g \leftarrow \nabla_{\theta^{\mathfrak{t}}}(-\hat L^X(\theta^{\mathfrak{t}})- \hat D^X(\theta^{\mathfrak{t}}) +\hat L^X_{o}(\theta^{\mathfrak{t}}) )$
\State\label{line:opt} $\theta^{\mathfrak{t}} \leftarrow \texttt{AdamW}(\theta^{\mathfrak{t}}, \lambda^{\mathfrak{t}}, g)$
\State 
\Return $\theta^{\mathfrak{t}}$
\EndFunction 
\end{algorithmic}
\begin{algorithmic}[1] 
\Function{UpdateSolution}{$\theta^{\mathfrak{s}}$,$\theta^{\mathfrak{t}}$ } 
\State Sample grid points in the interior $X=\{x_1, \hdots, x_N\}$
\State $(\hat L^{X}, \hat L_o^X, \hat D^X) \leftarrow$ \Call{ComputeLosses}{$X, \theta^{\mathfrak{s}}, \theta^{\mathfrak{t}}$}
\State\label{line:grad_weights2} $g \leftarrow \nabla_{\theta^{\mathfrak{s}}}(\hat L^X(\theta^{\mathfrak{s}}) + \hat D^X(\theta^{\mathfrak{s}}) +\hat L^X_{o}(\theta^{\mathfrak{s}}) )$
\State\label{line:opt2} $\theta^{\mathfrak{s}} \leftarrow \texttt{AdamW}(\theta^{\mathfrak{s}}, \lambda^{\mathfrak{s}}, g)$
\State 
\Return $\theta^{\mathfrak{s}}$ 
\EndFunction 
\end{algorithmic}
\begin{algorithmic}[1] 
\Function{ComputeLosses}{$X, u_{\theta^{\mathfrak{s}}}, v_{\theta^{\mathfrak{t}}}$} 
\State Approximate via  Monte Carlo integration
\begin{itemize}[itemsep=0pt]
    \item the loss $L(u_{\theta^{\mathfrak{s}}}, v_{\theta^{\mathfrak{t}}})$ by $\hat L^{X}$ 
\item the gap term loss $\| u_{\theta^{\mathfrak{s}}}-v_{\theta^{\mathfrak{t}}} \|^2 $ by $\hat D^X$ 
\item the obstacle loss $\|(\psi - u_{\theta^{\mathfrak{t}}})^+ \|^2 + \|(\psi - v_{\theta^{\mathfrak{t}}})^+ \|^2 $ by $\hat L_o^X$ 
\end{itemize}
\Return $(\hat L^{X}, \hat L_o^X, \hat D^X)$
\EndFunction 
\end{algorithmic}
\caption{Alternating gradient descent ascent algorithm in the $h \equiv 0$ setting} 
\label{alg:1}
\end{algorithm}
The GDA approach alternates between two steps: in the descent step it keeps the test function fixed and updates the weights of the solution candidate by one gradient step, in the ascent step it keeps the solution candidate fixed and updates the test function by one gradient step. Since this basic form of the GDA approach converges to the minmax point if the function is convex-concave (see \S \ref{remark_challenges_nonconvex_nonconcave}) and since the continuous formulation of the minmax problem is indeed convex-concave, one can at least heuristically hope that the GDA algorithm converges to the solution under suitable conditions.  
\begin{remark}
\corr{In the nonconvex–nonconcave setting, it is well known that GDA may fail to converge under standard stepsize choices, and can exhibit cycling or divergence even in simple examples (e.g. see \cite{2025arXiv250501423S}). Variants such as alternating or two-timescale GDA \citep{lin2020gradient}, as well as schemes incorporating negative stepsizes \citep{2025arXiv250501423S}, can improve convergence behaviour in certain settings. In addition, extragradient and optimistic gradient methods (OGDA) are known to provide better stability and convergence properties \citep{ZamaniAbbaszadehpeivastiDeKlerk2024,MokhtariOzdaglarPattathil2020}. Nevertheless, a general theory guaranteeing convergence to global saddle points in the nonconvex–nonconcave regime appears unclear, and iterates may instead converge to stationary points that do not correspond to meaningful minmax solutions. {Furthermore, even the appropriate notion of local optimality in the fully nonconvex-nonconcave case is subtle, and is typically formulated in terms of local minmax points or related stability concepts (see \cite{lin2020gradient,Jordan}).}
A detailed investigation of these algorithmic issues is beyond the scope of this work; accordingly, we restrict attention to the use of GDA and do not address potential stability or convergence issues.}
\end{remark}
Our implementation and codebase can be found at \cite{github}. It is written in \texttt{Python} and uses the \texttt{PyTorch} framework \citep{NEURIPS2019_9015}. The gradient with respect to the weights $\theta_i \in \Theta_i$ that appear in lines \ref{line:grad_weights} and \ref{line:grad_weights2} of Algorithm \ref{alg:1} are calculated using standard automatic differentiation libraries implemented in \texttt{PyTorch}. The optimization steps are each one step of the built-in optimizer \texttt{AdamW} \citep{loshchilov2018decoupled}. \texttt{AdamW}, which belongs to the family of stochastic gradient descent methods, is an adaptive gradient method that adjusts the learning rate of each parameter of the neural network individually such that the learning rates of parameters that typically have a larger gradient during training are slowed down more than the ones which typically have a small gradient. The major hyperparameters (which are detailed below) were found with the use of the \texttt{Optuna} package \citep{optuna_2019}. The results of the numerical experiments that are given below were performed on an NVIDIA A100 80GB PCIe GPU on \texttt{Python} version 3.12.4. By using the package \texttt{Ray} we were able to parallelise up to 10 training sessions on a single GPU; the training was done on a cluster with 4 GPUs.
 
For all our examples we use one of two sets of training hyperparameters, depending on whether the example is in 1D or 2D, see Tables \ref{tab:NN_params} and \ref{tab:config_params} for the most important parameters. While it is certainly possible to improve the results by using individual hyperparameters for each example, we do not do this for simplicity. 

\begin{table}[!htb]
    \centering
    \begin{tabular}{>{\ttfamily}l r}
        \textbf{Parameter} & \textbf{Value} \\
        \hline
        width $(\mathfrak{w})$ & 80\\
        depth $(\mathfrak{d})$ & 4\\
        activation $(\sigma)$ & $\tanh$ \\
        architecture $(\cF)$ & $\mathcal{F}_{\mathrm{DRR}}$\\
        \bottomrule
    \end{tabular}
    \caption{Architecture for the solution and test function.}
    \label{tab:NN_params}
\end{table}
    \begin{table}[!htb]
        \centering
    \begin{tabular}{>{\ttfamily}l r r}
        \textbf{Parameter} & \textbf{1D}  & \textbf{2D}\\
        \hline
n\_interior $(N)$& 1024&1024\\
n\_boundary $(N^b)$ & 2&256\\
\hline
Epochs $(M)$ & 12000 & 12000\\
lr\_soln $(\lambda^{\mathrm{s}})$ &  0.002 &0.003\\
lr\_testfn $(\lambda^{\mathrm{t}})$& 0.001 & 0.0047\\
\hline
$\mathrm{LR}^{\mathrm{s}},  \mathrm{LR}^{\mathrm{t}}$ &\multicolumn{2}{l}{\texttt{CosineAnnealingWarmRestarts}}\\
T\_0& 2001& 2001\\
  T\_mult&  2&  2\\
\hline
weight\_soln\_obs $(w_{o_1})$& 8000 & 5000 \\
weight\_testfn\_obs $(w_{o_2})$& 1500 & 5000\\
weight\_gap\_term $((2\alpha)^{-1})$& 0.0001 & 0.0005\\
\bottomrule
\end{tabular}
    \caption{Parameters for the 1D and 2D examples. \texttt{T\_0} and \texttt{T\_mult} are parameters in the learning rate scheduler.}
    \label{tab:config_params}
    \end{table}

We now present the examples. Along the way, we shall explore and analyse certain aspects related to the choice of weights and parameters.
\FloatBarrier

\subsection{1D Examples}
\subsubsection{Example 1: A benchmark example from \cite[Example 1, \S 3]{DeepNeural}}
Here we set $\Omega = (0,1)$, $h \equiv 0$, 
\begin{align*}
\psi(x) := \begin{cases}
100x^2 &: x \in [0, 0.25],\\
100x(1-x)-12.5 &: x \in (0.25, 0.5],\\
\psi(1-x) &: x \in (0.5, 1],
\end{cases}
\end{align*}
$f \equiv 0$, and solve problem \eqref{eq:VI} with $Au=-u_{xx}$.
The exact solution is
\begin{equation*}
u(x) = 
\begin{cases}
(100 - 50\sqrt{2})x &:\text{$0 \leq x < {1}\slash (2\sqrt{2})$}, \\
100x(1-x)-12.5  &:\text{${1}\slash (2\sqrt{2}) \leq x < 1-{1}\slash (2\sqrt{2})$}, \\
(50\sqrt{2}-100)(1-x) &:\text{$1-1\slash (2\sqrt{2})\leq x \leq 1$}.
\end{cases}
\end{equation*}
 The results are displayed in Figure \ref{fig:sol-diff-one_dim}. Figure \ref{fig:1dplotsolns} shows that our learned solution more or less coincides with the true solution, at least visually. A plot of the obstacle is also included for convenience. In Figure \ref{fig:1dplotdiff} we plot the difference of the true and learned solutions, which confirms that our approach produces a good result.  

\begin{figure}[!htb]
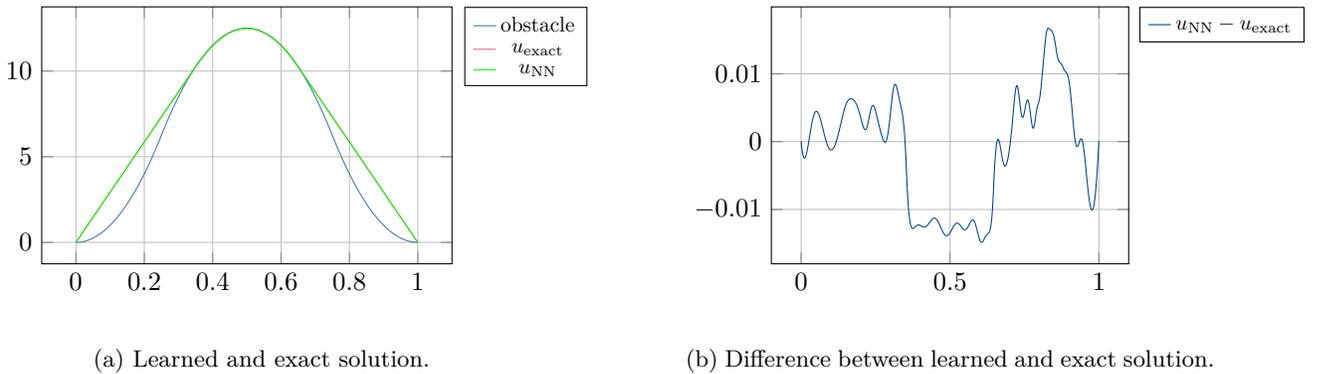
    
\centering
\hspace{-1cm}
    \begin{subfigure}[t]{0.40\textwidth}
           \plotMySolution{tikz_csvs_one_dim}{one_dim}
        \caption{Learned and exact solution. }
        \label{fig:1dplotsolns}
    \end{subfigure}\hspace{1cm}    
    \begin{subfigure}[t]{0.40\textwidth}
            \plotDiff{tikz_csvs_one_dim}{one_dim}        
            \caption{Difference between learned and exact solution.}
                    \label{fig:1dplotdiff}
    \end{subfigure}         
    \caption{\textbf{Example 1}. The difference between the learned solution $u_{\mathrm{NN}}$ and the true solution $u_{\mathrm{exact}}$ is smaller than 0.02. On the coincidence set $[{1}\slash {2\sqrt{2}}, 1-{1}\slash {2\sqrt{2}}]\approx [0.354,0.646]$, $u_{\mathrm{NN}}$ violates the obstacle condition nearly constantly. The maximal difference is comparable to the one obtained in \cite{DeepNeural}.}
    \label{fig:sol-diff-one_dim}
\end{figure}

In Figure \ref{fig:L2_Linf_Errors_1D_eg} we plot the evolution of the $L^2$ and $L^\infty$ errors during training for a number of different runs or initialisations (i.e., different random seeds). The variance in the evolutions for the different runs is explained by this and the Monte Carlo integration, and we see that the variance between the runs is very small.

\begin{figure}[!htb]
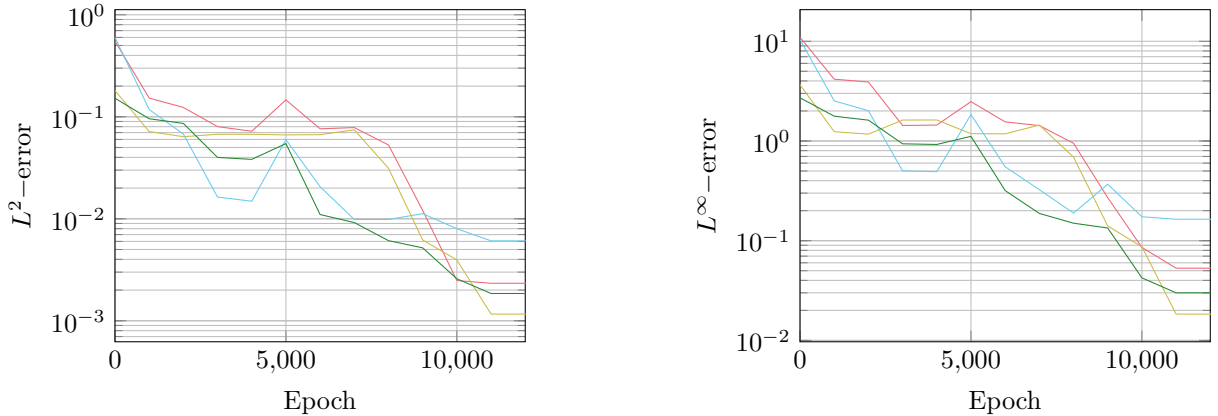

    \centering
    \begin{subfigure}[t]{0.40\textwidth}
\plotTrajectories{tikz_csvs_one_dim}{one_dim}{error_l2}{$L^2-$error}
\label{fig:error_l2_one_dim}        
    \end{subfigure}\hspace{2cm}
\begin{subfigure}[t]{0.40\textwidth}
    \plotTrajectories{tikz_csvs_one_dim}{one_dim}{error_linfty}{$L^\infty-$error}
\label{fig:error_linfty_-one_dim}
\end{subfigure}
\caption{Training trajectories for Example 1. }
\label{fig:L2_Linf_Errors_1D_eg}
\end{figure}

\FloatBarrier

\subsubsection{Example 2: a non-symmetric case}
Let us proceed with an example where the elliptic operator associated to the VI is non-symmetric. As we mentioned, being able to handle non-symmetric VIs is one of the advantages of our work which distinguishes us from other existent work in the literature to the best of our knowledge. We set $\Omega = (-2,2)$, $h \equiv 0$, $\psi(x) = 1-x^2$, define
\begin{align*}
f(x) := \begin{cases}
(4 - 2 \sqrt{3}) &: x \in [-2,-2+\sqrt{3}),\\
-(4 - 2 \sqrt{3}) &: x \in [2-\sqrt{3},2],\\
- ( 2\sqrt{3}-2) &: x \in [-2+\sqrt{3},2-\sqrt{3}],\\
0 &: \text{otherwise},
\end{cases}
\end{align*}
and solve the non-symmetric version of \eqref{eq:VI} with $Au=-u_{xx}+u_x$.
The exact solution is
\begin{equation*}
u(x) = 
\begin{cases}
(4 - 2 \sqrt{3})(x + 2) &:\text{$-2 \leq x < -2 + \sqrt{3}$}, \\
1-x^2 &:\text{$-2 + \sqrt{3}  \leq x < 2 - \sqrt{3}$}, \\
(4 - 2 \sqrt{3})(2-x) &:\text{$2-\sqrt{3} \leq x < 2$}.
\end{cases}
\end{equation*}
This is a modification of the (symmetric) example in \cite[\S 4.1]{TwoNN}. We again observe a pleasing result, see Figure \ref{fig:sol-diff-ns_one_dim}.  
\begin{figure}[!htb]
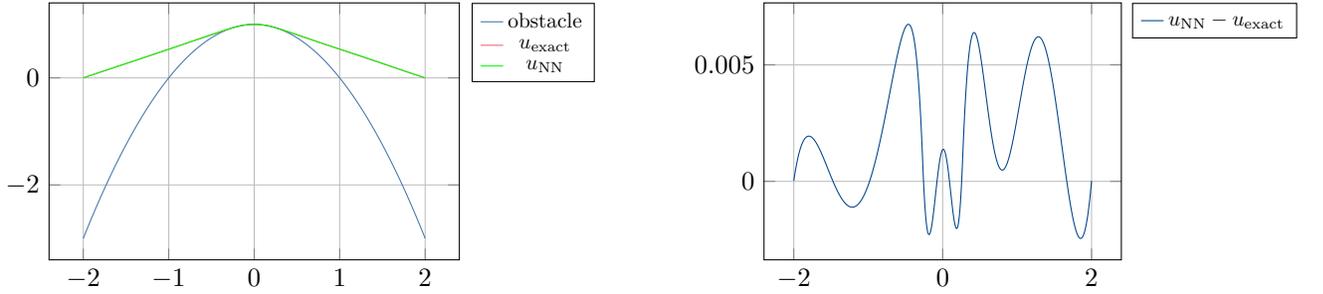

    \centering
    \hspace{-2cm}
    \begin{subfigure}{0.40\textwidth}
        \plotMySolution{tikz_csvs_one_dim}{ns_one_dim}        
    \caption{Learned and exact solution.}
    \end{subfigure}\hspace{1cm}
    \begin{subfigure}{0.40\textwidth}
        \plotDiff{tikz_csvs_one_dim}{ns_one_dim}    
        \caption{Difference between learned and exact solution. }
    \end{subfigure}
    \caption{\textbf{The non-symmetric Example 2}. The difference  has a magnitude smaller than $0.007$.}
    \label{fig:sol-diff-ns_one_dim}
\end{figure}

\begin{figure}[!htb]
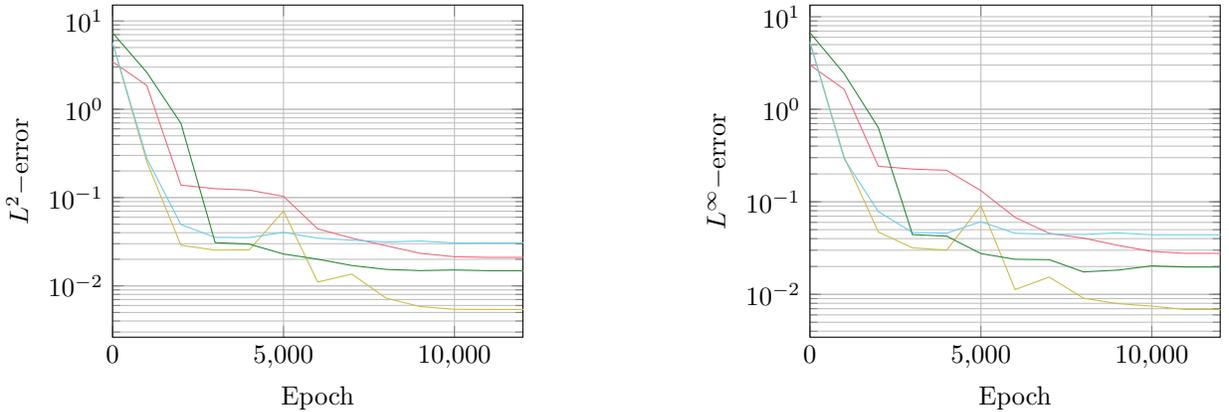

    \centering
    \begin{subfigure}[t]{0.40\textwidth}  
        \plotTrajectories{tikz_csvs_one_dim}{ns_one_dim}{error_l2}{$L^2-$error}
\label{fig:error_l2_ns_one_dim}    
    \end{subfigure}
\hspace{2cm}
\begin{subfigure}[t]{0.40\textwidth}
    \centering
        \plotTrajectories{tikz_csvs_one_dim}{ns_one_dim}{error_linfty}{$L^\infty-$error}
\label{fig:error_linf_ns_one_dim}
    \end{subfigure}
\caption{Training trajectories for Example 2.  Compared to Figure \ref{fig:L2_Linf_Errors_1D_eg}, the convergence happens  earlier.}
\end{figure}

\paragraph{Obstacle weight experiment}
Recall that we enforce the obstacle condition via penalty terms for the solution and test function with weights $w_{o_1}$ and $w_{o_2}$ respectively. In Figure \ref{fig:obs-weight-vs_error-C} we study the effect that different weights $w_{o_1}=w_{o_2}$ have on the error in the context of Example 2. We see that there is an optimal value for the penalty, which is reasonable: for small weights the solution is not punished enough for violating the obstacle constraint and for large weights, during the training phase, the solution candidate is pushed above the obstacle so that halting at the correct solution is overridden by the ``momentum" that arises from the obstacle term.

\begin{figure}[!htb]
\centering
      \makedarkfull{YlOrBr}
\begin{subfigure}[t]{0.30\textwidth}
\begin{tikzpicture}
\begin{axis}[
   xlabel={Obstacle weight},
   ylabel={$L^2$-error},
x tick label style={rotate=25} ,
   yticklabel style={
        /pgf/number format/fixed,
        /pgf/number format/precision=5
	},
   width=\textwidth,
   colormap/YlOrBrDarkFull, 
   cycle list={[of colormap]},
]
\addplot[scatter,
   only marks,] table [x=weight_soln_obs, y=error_l2,col sep=comma] {visu/tikz_csvs_obs_force/mean.csv};
\end{axis}
\end{tikzpicture}
\end{subfigure}
\hspace{2cm}
\begin{subfigure}[t]{0.30\textwidth}
\begin{tikzpicture}
\begin{axis}[
   xlabel={Obstacle weight},
   ylabel={$H^1$-error},
      x tick label style={rotate=25} ,
   yticklabel style={
        /pgf/number format/fixed,
        /pgf/number format/precision=5
	},
width=\textwidth,
   colormap/YlOrBrDarkFull, 
   cycle list={[of colormap]},
]
\addplot[scatter,
   only marks,] table [x=weight_soln_obs, y=H_one_norm, col sep=comma] {visu/tikz_csvs_obs_force/mean.csv};
\end{axis}
\end{tikzpicture}
\end{subfigure}
\caption{Mean (over 50 seeds) of $L^2$ and $H^1$ errors for Example 2 for different values of $w_{o_1}=w_{o_2}$.}
   \label{fig:obs-weight-vs_error-C}
\end{figure}
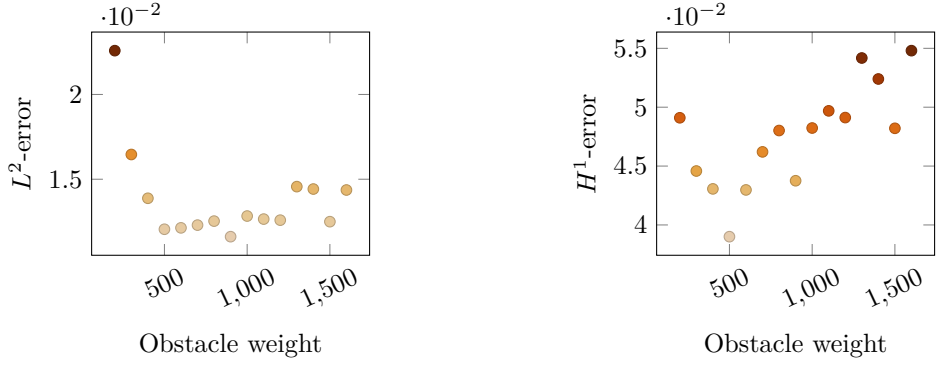
A weight of $900$ leads to an optimum in the $L^2$-sense, but once gradients are taken into account, $500$ appears to be the optimal choice (with $900$ not too distant). By examining the individual errors that produced each mean, $500$ has a lower variance in $L^2$ and the variances are comparable in $H^1$,  but the lowest error for $900$ bests the lowest error for $500$, across both measures of error. That said, we emphasise that both weights produce low errors: they differ only on an order of $10^{-2}$.

\FloatBarrier

\subsubsection{Example 3: a piecewise smooth case}
Our final one-dimensional example is taken from \cite[\S 2]{CM}; we call this `piecewise' because the solution is composed of five separate pieces and to the unsuspecting eye appears to be a piecewise affine function. The setting here is $\Omega = (-1,1)$, $h \equiv 0$, $f \equiv 0$, and with $\alpha = 0.4$, 
\begin{align*}
\psi(x) := \begin{cases}
\varphi\left(x+\frac 12\right)\left(\frac 32 - 12|x+\frac 12|^{2-\alpha}\right) - \frac 12 &: x \in (-1, 0],\\
\varphi\left(x-\frac 12\right)\left(\frac 32 - 12|x-\frac 12|^{2-\alpha}\right) - \frac 12 &: x \in (0,1),
\end{cases}
\end{align*}
where $\varphi \in C_c^\infty(\mathbb{R})$ satisfies $0 \leq \varphi \leq 1$, $\varphi=1 \text{ in } (-0.3, 0.3),$ and $\mathrm{supp}(\varphi) \subset [-0.4, 0.4].$
We solve the problem \eqref{eq:VI} with $Au=-u_{xx}$.
The exact solution is
\begin{equation}
u(x) = 
\begin{cases}
\psi(-\beta-0.5)\frac{x+1}{0.5-\beta} &: x \in (-1, -\beta - 0.5), \\
\psi(x) &: {x \in [-0.5-\beta, -0.5)}, \\
1 &:{x \in [-0.5, 0.5)},\\
\psi(x) &:{x \in [0.5, 0.5+\beta)}, \\
\psi(\beta+0.5)\frac{x-1}{\beta-0.5} &:{x \in [\beta + 0.5, 1)}, 
\end{cases}
\end{equation}
where the constant $\beta$ is the unique solution of the equation
\[\psi(-\beta - 0.5) = (0.5-\beta)\psi'(-\beta - 0.5), \quad \beta \in (0, 0.3).\]
In practice, we take $\beta = 0.02376$ as an approximate solution of the equation, and as for the function $\varphi$, we use
\[\varphi(x) := \frac{\mu (0.4 - |x|)}{\mu(|x| - 0.3) + \mu(0.4 - |x|)},\qquad\text{where}\qquad \mu(x) := \begin{cases}
    \exp(-1\slash x) &: x > 0,\\
    0 &: x \leq 0.
\end{cases}\]
The results can be seen in Figure \ref{fig:sol-diff-CM}.

\begin{figure}[!htb]
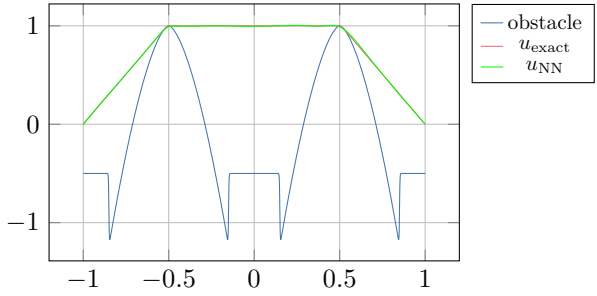
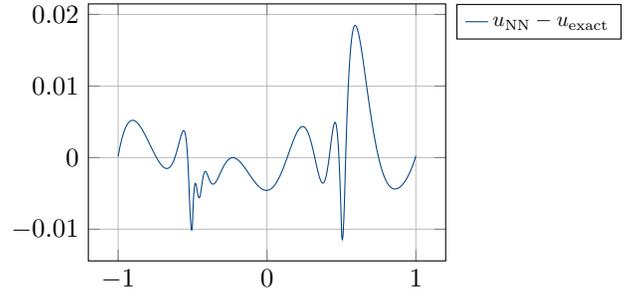

    \centering
    \hspace{-2cm}
    \begin{subfigure}{0.40\textwidth}
        \plotMySolution{tikz_csvs_one_dim}{CM}
    \caption{Learned and exact solution.}
    \end{subfigure}\hspace{1cm}
    \begin{subfigure}{0.40\textwidth}
        \plotDiff{tikz_csvs_one_dim}{CM}
        \caption{Difference between learned and exact solution.}
    \end{subfigure}
    \caption{\textbf{Piecewise smooth case of Example 3}. The difference to the exact solution has a magnitude smaller than $0.03$. Compared to the small average difference the violation of the obstacle condition at the peaks (for $x=0.5$ and $x=-0.5$) is relatively large.}
    \label{fig:sol-diff-CM}
\end{figure}

\begin{figure}[!htb]
\begin{subfigure}[t]{0.40\textwidth}  
        \plotTrajectories{tikz_csvs_one_dim}{CM}{error_l2}{$L^2-$error}
\label{fig:error_l2_CM}    
    \end{subfigure}
\hspace{2cm}
\begin{subfigure}[t]{0.40\textwidth}
    \centering
        \plotTrajectories{tikz_csvs_one_dim}{CM}{error_linfty}{$L^\infty-$error}
\label{fig:error_linfty_CM}
    \end{subfigure}
\caption{Training trajectories for Example 3.}
\end{figure}\paragraph{Mesh independence experiment}
Let us look at how the $L^2$ error for Example 3 behaves as we change the size (i.e., width and depth) of the neural networks parametrising the solution and test function (both networks share the same architecture). In Figure \ref{fig:mi_CM}  (left) we visualise the error for each depth as the width varies over the horizontal axis. We see that when the depth is sufficiently large ($\geq 6)$, the errors for all widths are within a margin of approximately $10^{-2}$. Likewise, when the width is large enough ($\geq 40$), the errors stay below approximately $0.015.$ This can be interpreted as a kind of mesh independence: the size of the network determines the number of learnable parameters, and the figure shows that the quality of the approximation is (roughly) independent of the number of learnable parameters. It is also illustrative to see how the error varies with respect to the total number of learnable weights (which is a function of the width multiplied by the depth), see Figure \ref{fig:mi_CM} (right). The trend is clearly that the more weights the better the solution, but we again see a threshold level of weights beyond which the error is acceptable. 
 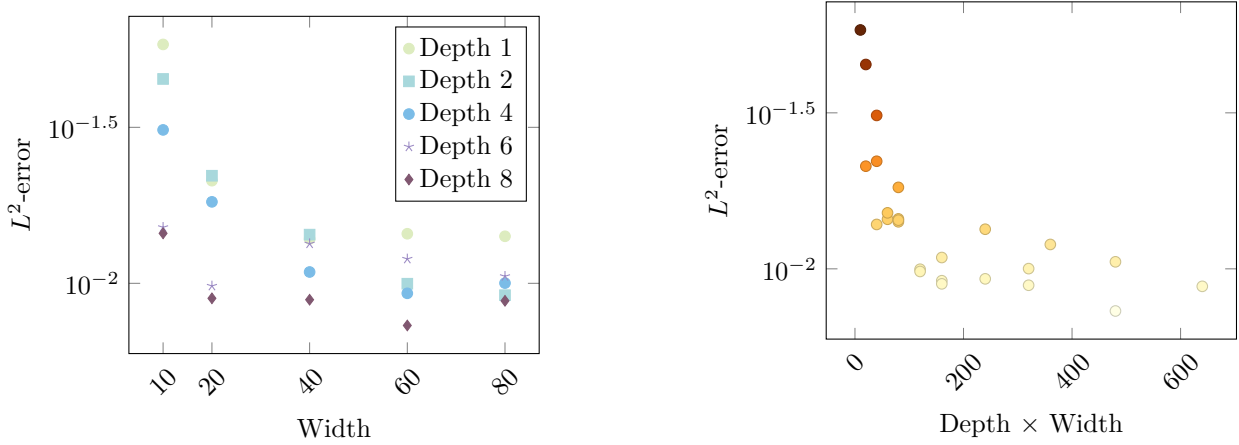
\begin{figure}
 \begin{subfigure}{0.40\textwidth}
\begin{tikzpicture}
\definecolor{clr_mi_1}{HTML}{DDECBF}
\definecolor{clr_mi_2}{HTML}{A8D8DC}
\definecolor{clr_mi_3}{HTML}{7BBCE7}
\definecolor{clr_mi_4}{HTML}{9B8AC4}
\definecolor{clr_mi_5}{HTML}{805770}
\begin{axis}[
   xlabel={Width},
   ylabel={$L^2$-error},
   legend pos=north east,
   xtick=data,
   ymode=log,
   width=\textwidth,
   x tick label style={rotate=45}            
]
\addplot+[only marks, mark options={draw=clr_mi_1, fill=clr_mi_1}] table [x=width, y=error_l2,col sep=comma] {visu/tikz_csvs_NN_dim_CM/mean_D_1.csv};
\addplot+[only marks, mark options={draw=clr_mi_2, fill=clr_mi_2}] table [x=width, y=error_l2,col sep=comma] {visu/tikz_csvs_NN_dim_CM/mean_D_2.csv};
\addplot+[only marks, mark options={draw=clr_mi_3, fill=clr_mi_3}] table [x=width, y=error_l2,col sep=comma] {visu/tikz_csvs_NN_dim_CM/mean_D_4.csv};
\addplot+[only marks, mark options={draw=clr_mi_4, fill=clr_mi_4}] table [x=width, y=error_l2,col sep=comma] {visu/tikz_csvs_NN_dim_CM/mean_D_6.csv};
\addplot+[only marks, mark options={draw=clr_mi_5, fill=clr_mi_5}] table [x=width, y=error_l2,col sep=comma] {visu/tikz_csvs_NN_dim_CM/mean_D_8.csv};
\legend{Depth 1, Depth 2, Depth 4, Depth 6, Depth 8}
\end{axis}
\end{tikzpicture}
\end{subfigure}
\hspace{2cm}
\begin{subfigure}{0.40\textwidth}
\begin{tikzpicture}
\begin{axis}[
   xlabel={Depth $\times$ Width},
   ylabel={$L^2$-error},
   legend pos=north east,
   ymode=log,
   width=\textwidth,
  colormap/YlOrBr,
   cycle list={[of colormap]},
   x tick label style={rotate=45}            
]
\addplot[scatter, only marks] table [x=nb_weights, y=error_l2,col sep=comma] {visu/tikz_csvs_NN_dim_CM/mean.csv};
\end{axis}
\end{tikzpicture}
\end{subfigure}
\caption{Effect of neural network size on the solution of Example 3 (means over 4 seeds are plotted).}
\label{fig:mi_CM}
\end{figure}
\FloatBarrier

\subsection{2D Examples}
\subsubsection{Example 4: an example from optimal control}
We take this example from \cite[\S 7.1]{MT}. Set $\Omega = (0,1)^2$, $A=-\Delta$, $\psi \equiv 0$, $h \equiv 0$, and using the intermediary quantities
\begin{align*}
z_1(x) &:= -4096x^6 + 6144x^5 - 3072x^4 + 512x^3,\\
z_2(x) &:= -244.140625 x^6 + 585.9375x^5 - 468.75x^4 + 125x^3,\\
\zeta &:= \begin{cases}
z_1(x-0.5)z_2(y) &: x \in (0.5,1) \text{ and } y \in (0, 0.8),\\
0 &: \text{otherwise},
\end{cases}
\end{align*}
we define the source term
\begin{align*}
f(x,y) := -\zeta - \begin{cases}
z_1(x)z_2''(y) + z_1''(x)z_2(y) &: x < 0.5 \text{ and } y < 0.8,\\
0 &: \text{otherwise},
\end{cases}
\end{align*}
and  consider the VI \eqref{eq:VI} with $Au=-\Delta u$.
The exact solution is
\begin{align*}
u(x,y) &= \begin{cases}
z_1(x)z_2''(y) + z_1''(x)z_2(y) &: x < 0.5 \text{ and } y < 0.8,\\
0 &: \text{otherwise}.
\end{cases}
\end{align*}
We visualise the results in Figure \ref{fig:sol-diff-MT}. Since the obstacle is the zero function, we do not plot it explicitly. 
\begin{figure}[!htb]
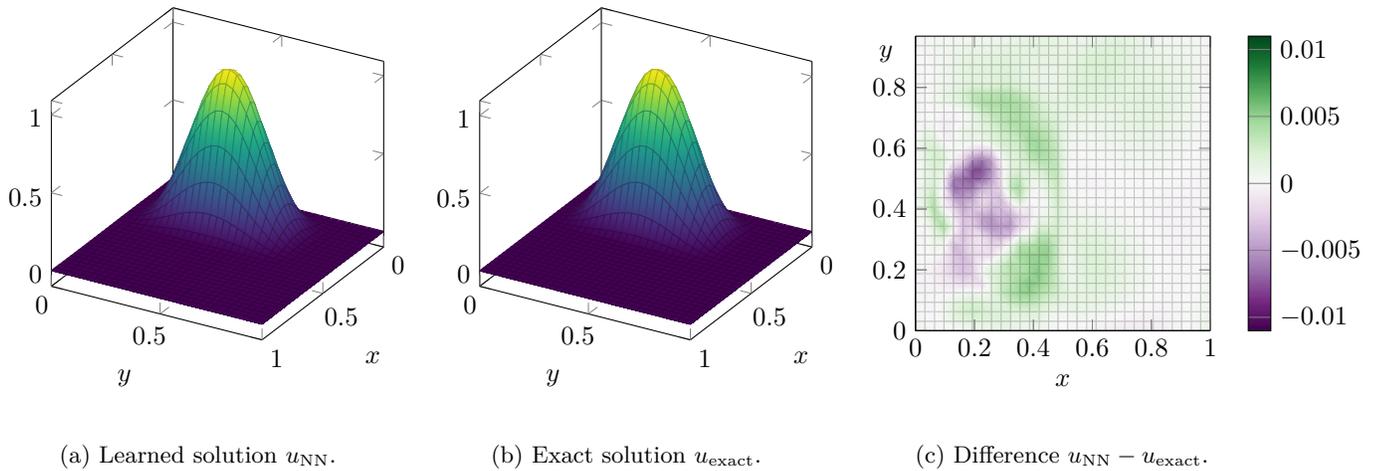

    \begin{subfigure}{0.22\textwidth}
\plotSurface{two_dim}{MT}{sol_nn}
    \caption{Learned solution $u_{\mathrm{NN}}$.}
    \end{subfigure}\hspace{1.5cm}
\begin{subfigure}{0.22\textwidth}
\plotSurface{two_dim}{MT}{gt_nn}
    \caption{Exact solution $u_{\mathrm{exact}}$.}
    \end{subfigure}\hspace{1.6cm}
    \begin{subfigure}{0.23\textwidth} 
\plotSurfaceTopView{two_dim}{MT}{diff}{0.011}
        \caption{Difference $u_{\mathrm{NN}}-u_{\mathrm{exact}}$.}
    \end{subfigure}
    \caption{\textbf{Example 4}. Similarly to the 1D examples the solution candidate rather slightly violates the obstacle conditions (this is illustrated by the purple area in the third plot).}
    \label{fig:sol-diff-MT}
\end{figure}

\begin{figure}[!htb]
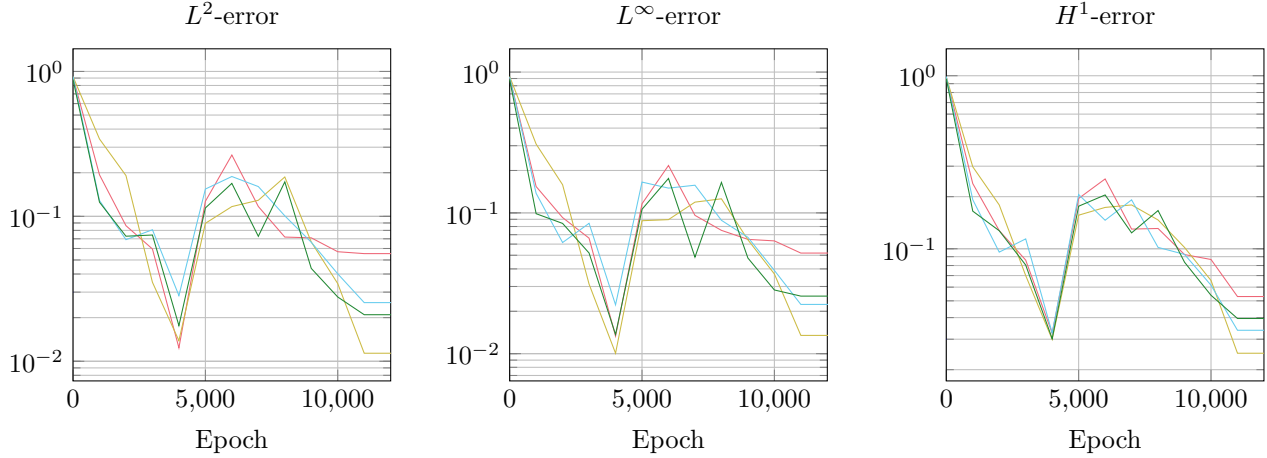

    \centering
\begin{subfigure}[t]{0.33\textwidth}
\plotTrajectoriesLong{tikz_csvs_two_dim}{MT}{error_l2}{$L^2$-error}{12000}
\end{subfigure}\begin{subfigure}[t]{0.33\textwidth}
\plotTrajectoriesLong{tikz_csvs_two_dim}{MT}{error_linfty}{$L^\infty$-error}{12000}
\end{subfigure}\begin{subfigure}[t]{0.33\textwidth}
\plotTrajectoriesLong{tikz_csvs_two_dim}{MT}{H_one_norm}{$H^1$-error}{12000}
\end{subfigure}
\caption{Training trajectories for Example 4. This example converges already at $4000$ epochs; the warm restart phase (which occurs due to the learning rate scheduler) from $4000$ till the end does not greatly improve the error for all runs, \corr{see also Remark \ref{rem:warm_restart}}. Note the correlation between the $L^p$ and $H^1$ errors.}
\label{fig:loss_r_two_dim}
\end{figure}\paragraph{Mesh independence experiment}
We again take a look at how the network sizes affect the quality of the solution, now for Example 4, see Figure \ref{fig:mi_MT}.
\begin{figure}
\begin{subfigure}{0.40\textwidth}
\begin{tikzpicture}
\definecolor{clr_mi_1}{HTML}{DDECBF}
\definecolor{clr_mi_2}{HTML}{A8D8DC}
\definecolor{clr_mi_3}{HTML}{7BBCE7}
\definecolor{clr_mi_4}{HTML}{9B8AC4}
\definecolor{clr_mi_5}{HTML}{805770}
\begin{axis}[
   xlabel={Width},
   ylabel={$L^2$-error},
   legend pos=north east,
   xtick=data,
   ymode=log,
   width=\textwidth,
   x tick label style={rotate=45}            
]
\addplot+[only marks, mark options={draw=clr_mi_1, fill=clr_mi_1}] table [x=width, y=error_l2,col sep=comma] {visu/tikz_csvs_NN_dim_MT/mean_D_1.csv};
\addplot+[only marks, mark options={draw=clr_mi_2, fill=clr_mi_2}] table [x=width, y=error_l2,col sep=comma] {visu/tikz_csvs_NN_dim_MT/mean_D_2.csv};
\addplot+[only marks, mark options={draw=clr_mi_3, fill=clr_mi_3}] table [x=width, y=error_l2,col sep=comma] {visu/tikz_csvs_NN_dim_MT/mean_D_4.csv};
\addplot+[only marks, mark options={draw=clr_mi_4, fill=clr_mi_4}] table [x=width, y=error_l2,col sep=comma] {visu/tikz_csvs_NN_dim_MT/mean_D_6.csv};
\addplot+[only marks, mark options={draw=clr_mi_5, fill=clr_mi_5}] table [x=width, y=error_l2,col sep=comma] {visu/tikz_csvs_NN_dim_MT/mean_D_8.csv};
\legend{Depth 1, Depth 2, Depth 4, Depth 6, Depth 8}
\end{axis}
\end{tikzpicture}
\end{subfigure}
\hspace{2cm}
\begin{subfigure}{0.40\textwidth}
\begin{tikzpicture}
\begin{axis}[
   xlabel={Depth $\times$ Width},
   ylabel={$L^2$-error},
   legend pos=north east,
   ymode=log,
   width=\textwidth,
  colormap/YlOrBr,
   cycle list={[of colormap]},
   x tick label style={rotate=45}            
]
\addplot[scatter, only marks] table [x=nb_weights, y=error_l2,col sep=comma] {visu/tikz_csvs_NN_dim_MT/mean.csv};
\end{axis}
\end{tikzpicture}
\end{subfigure}
\caption{Effect of neural network size on the solution of Example 4 (1 seed is used).}
\label{fig:mi_MT}
\end{figure}
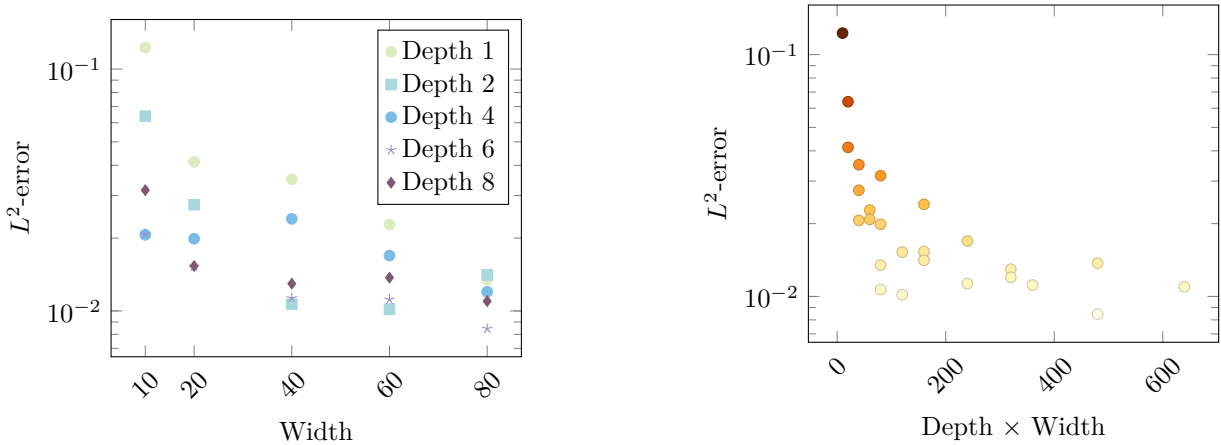
Note that it is not the case that the bigger the network, the better the solution. Larger networks may require a longer training period (number of epochs), whereas smaller networks may suffer from underfitting. The optimal  for this particular example and run appears to be a network architecture of depth $2$ and width $40$; we again recall the fact that we choose our architecture to provide a good result for all the examples that we considered and a case-by-case fine-tuning would likely improve our results, but this is not the focus of the paper.

\begin{remark}\label{rem:warm_restart}
\corr{In Figure \ref{fig:loss_r_two_dim} the training error has a noticeable jump at the beginning of the warm restart period. We investigated this effect in ablation experiments. (1) With a standard plateau scheduler, 
even after hyperparameter tuning, the overall results were significantly worse,
both in terms of final accuracy and stability. (2) With longer restart periods, 
the frequency of jumps is reduced but this does not improve
the final error; in fact, the overall performance slightly degrades.
This indicates that the restart mechanism is beneficial for convergence in this problem class.}

\end{remark}

\FloatBarrier

\subsubsection{Examples 5 and 6: biactive cases}
Here we look at situations where biactivity (or lack of strict complementarity) is present. Let us briefly explain what this means.  Recall from \eqref{eq:CS} that solutions of the VI satisfy the condition $(Au-f)(u-\psi)= 0$. This means that pointwise a.e., either $Au-f$ or $u-\psi$ or both must be equal to zero. When both are zero, we say that biactivity is present, i.e., the multiplier $Au-f$ vanishes on the coincidence set $\{u=\psi\}$. It is well known that traditional numerical methods such as active set update strategies face great difficulty in handling problems with biactivity. 

We consider first an example taken from \cite[\S 4.8.1]{KS}. Set $\Omega = (-1,1)^2$, 
\[f(x,y) = \begin{cases}
    0 &: x <0,\\
    -12x^2 &: \text{otherwise},
\end{cases}\] $\psi \equiv 0$, and the exact solution
\[u(x,y) = \begin{cases}
    0 &: x <0,\\
    x^4 &: \text{otherwise},
\end{cases}
\]
of \eqref{eq:VI} with $Au=-\Delta u$. The boundary data $h$ is determined by $u$. The results are shown in Figure \ref{fig:ks}.

\begin{figure}[!htb]
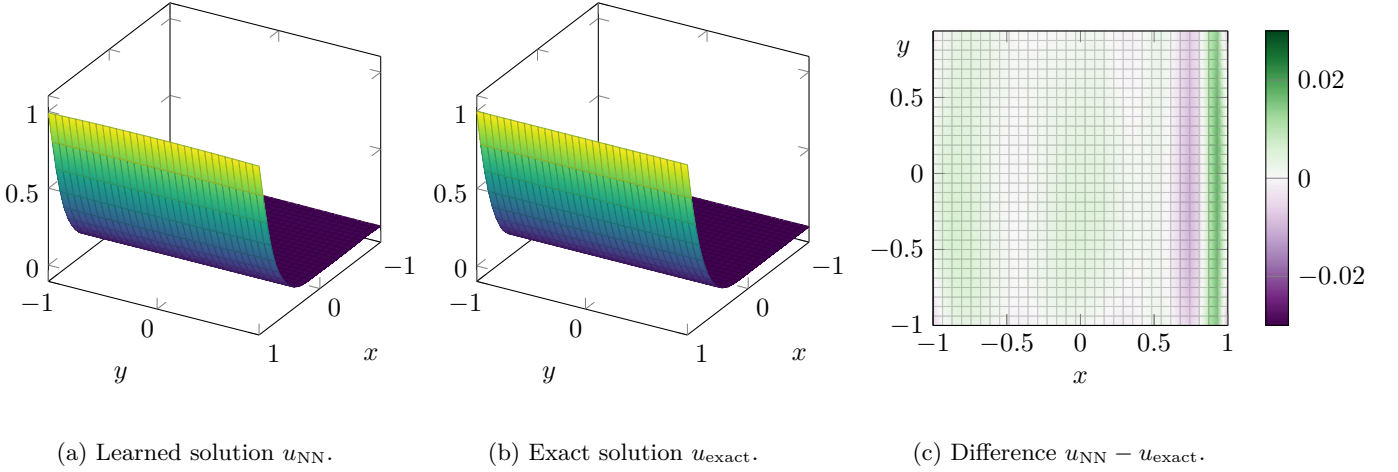

    \begin{subfigure}{0.22\textwidth}
\plotSurface{two_dim}{KS}{sol_nn}
    \caption{Learned solution $u_{\mathrm{NN}}$.}
    \end{subfigure}\hspace{1.5cm}
    \begin{subfigure}{0.22\textwidth}
\plotSurface{two_dim}{KS}{gt_nn}
    \caption{Exact solution $u_{\mathrm{exact}}$.}
    \end{subfigure}\hspace{1.6cm}
        \begin{subfigure}{0.23\textwidth}
\plotSurfaceTopView{two_dim}{KS}{diff}{0.03}
        \caption{Difference $u_{\mathrm{NN}}-u_{\mathrm{exact}}$.}
    \end{subfigure}        
        \caption{\textbf{Example 5}: biactive case from \cite[\S 4.8.1]{KS}.}
        \label{fig:ks}
\end{figure}

     \FloatBarrier

Our next problem, Example 6, also exhibits biactivity and additionally displays a nonsmoothness in the multipler $Au-f$, and is taken from \cite[\S 4.8.3]{KS}. Set $\Omega=(-1,1)^2$, $\psi \equiv 0$, and the exact solution
\[u(x,y)=\begin{cases}
    (1-4x^2-4y^2)^4 &: x^2 + y^2 < \frac 14,\\
    0 &: \text{otherwise},
\end{cases}\]
informing the source term
\[f(x,y)=-\Delta u(x,y) - \begin{cases}
    1 &: x^2 + y^2 > \frac 34,\\
    0 &: \text{otherwise},
\end{cases}\]
for \eqref{eq:VI} with $Au=-\Delta u$. The boundary data $h$ again is taken from $u$. See Figure \ref{fig:ks_ns} for the results. 
\begin{figure}[!htb]
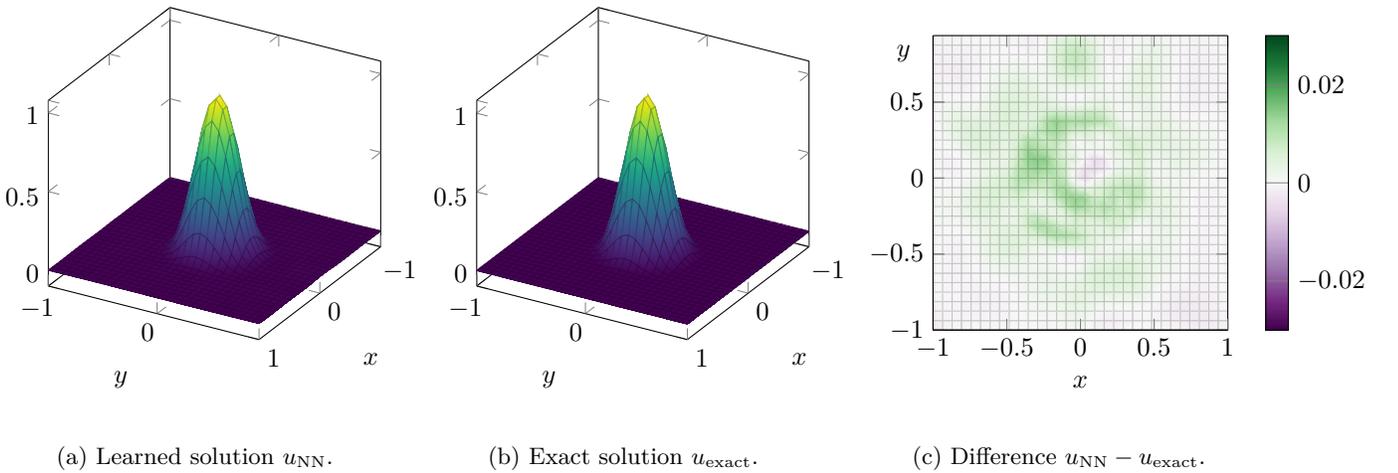

    \begin{subfigure}{0.22\textwidth}
\plotSurface{two_dim}{KS_ns}{sol_nn}
    \caption{Learned solution $u_{\mathrm{NN}}$.}
    \end{subfigure}\hspace{1.5cm}
    \begin{subfigure}{0.22\textwidth}
\plotSurface{two_dim}{KS_ns}{gt_nn}
    \caption{Exact solution $u_{\mathrm{exact}}$.}
    \end{subfigure}\hspace{1.6cm}
        \begin{subfigure}{0.23\textwidth}
\plotSurfaceTopView{two_dim}{KS_ns}{diff}{0.03}
        \caption{Difference $u_{\mathrm{NN}}-u_{\mathrm{exact}}$.}
    \end{subfigure}        
        \caption{\textbf{Example 6}: second biactive case taken from \cite[\S 4.8.3]{KS}.}
        \label{fig:ks_ns}
\end{figure}
 In both examples, we see that our solution algorithm performs well (the results for Example 5 are comparable to the other examples; Example 6  is slightly more tricky, see \S \ref{sec:randomness}) and does not encounter the biactivity as an issue due to our approach. Thus our work offers a potential advantage over active set methods. 
\FloatBarrier
\paragraph{Regularised gap weight experiment} 
We investigate what effect changing the weight $1\slash (2\gamma)$ in front of the regularised gap term has on the solution for Example 6. See Figure \ref{fig:diagonal_forcing_experiment} for a plot of the $H^1$ errors observed for different choices of gap weight. Observe that the error improves as the weight is increased until an optimum point is reached, after which the error increases exponentially. This tallies with our earlier remarks that $\gamma$ should be sufficiently large (hence $1\slash(2\gamma)$ should be sufficently small), but taking too small a value does not lead to a huge improvement, i.e., there is a trade-off.
 
\begin{figure}[!htb]
   \centering
\begin{subfigure}{0.30\textwidth}
   \makedarkfull{BuPu}
\begin{tikzpicture}
\begin{axis}[
   xlabel={Regularised gap weight $1\slash (2\gamma)$},
   ylabel={$H^1$-error},
xmode=log,
ymode=log,
width=\textwidth,
colormap/BuPuDarkFull, 
   cycle list={[of colormap]},
   x tick label style={rotate=45}            
]
\addplot[scatter,
   only marks,] table [x=diagonal_force, y=H_one_norm,col sep=comma] {visu/tikz_csvs_diagonal_force/mean.csv};
\end{axis}
\end{tikzpicture}
\end{subfigure}
   \caption{Mean over 10 seeds of the $H^1$-errors for Example 6 for different regularised gap weights.}
   \label{fig:diagonal_forcing_experiment}
\end{figure}
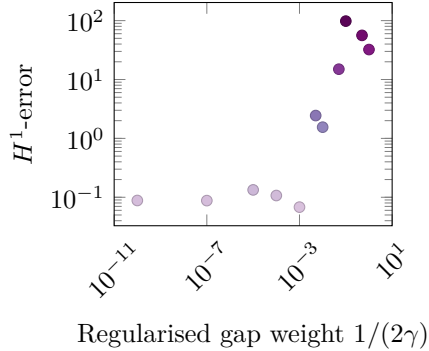

     \FloatBarrier

\subsection{Replicability and randomness}\label{sec:randomness}
It is well known that an improperly designed solution algorithm or loss function can lead to results that are radically different depending on the machine that the code is run on or depending on the choice of the random seed. In the context of our work, this means that it is worthwhile to check whether the hyperparameters we suggest actually produce quantifiably good results (i.e., low error of the learned solution in comparison to the exact solution) when run on different devices. As a statistical analysis of running our code on many different devices is impractical, we instead perform a sensitivity analysis where we vary the random seed and check whether the resulting model is robust with respect to the seed (the idea being that changing the random seed is akin to using a different device). 

We present box plots of $L^2$ errors corresponding to different seeds for our examples in Figure \ref{fig:box1} and Figure \ref{fig:box2}. The bottom of each box corresponds to the 25th percentile and the top to the 75th percentile of 50 runs. For the 1D examples we see that the 75th percentile is smaller than $0.025$; for the 2D examples the 75th percentile is smaller than $0.065$.

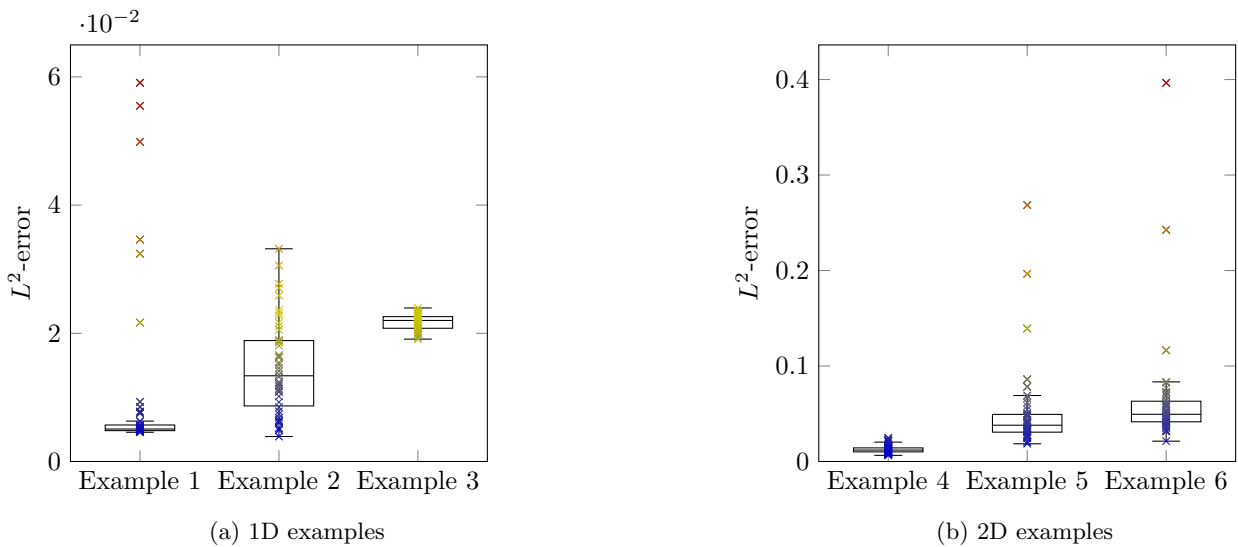
\begin{figure}[!htb]
    \centering
    \begin{subfigure}[t]{0.45\textwidth}
\begin{tikzpicture}
\pgfplotstableread[col sep=comma]{visu/tikz_csvs_hist/hist_one_dim/hist.csv}\dataonedim
    \pgfplotstableread[col sep=comma]{visu/tikz_csvs_hist/hist_ns_one_dim/hist.csv}\dataNSonedim
    \pgfplotstableread[col sep=comma]{visu/tikz_csvs_hist/hist_CM/hist.csv}\dataCM    
    \definecolor{clr_boxplot1}{HTML}{CCDDAA}  
    \definecolor{clr_boxplot2}{HTML}{66CCEE} 
    \definecolor{clr_boxplot3}{HTML}{CCBB44}  
    \begin{axis}[
        width=0.9\textwidth, height=0.9\textwidth, xlabel={},
        ylabel={$L^2$-error},
        ymin=0,
        boxplot/draw direction=y,
        boxplot/box extend=0.5,
        boxplot/whisker extend=0.2, boxplot/every outlier/.style={mark=*, mark size=2, fill=black}, xtick={1,2,3,4,5,6,7,8,9},
        xticklabels={Example 1, Example 2, Example 3},
]
        \addplot[boxplot, boxplot/draw position=1, mark=x, mark options={color=black}] table[y index=3]{\dataonedim};      
        \addplot[boxplot, boxplot/draw position=2, mark=x, mark options={color=black}] table[y index=3]{\dataNSonedim};
        \addplot[boxplot, boxplot/draw position=3, mark=x, mark options={color=black}] table[y index=3]{\dataCM};
       
        \addplot [scatter, only marks, mark=x, mark options={fill=black, color=black}]
            table[x expr=1, y index=3]{\dataonedim};
        \addplot [scatter, only marks, mark=x, mark options={fill=black, color=black}]
            table[x expr=2, y index=3]{\dataNSonedim};
        \addplot [scatter, only marks, mark=x, mark options={fill=black, color=black}]
            table[x expr=3, y index=3]{\dataCM};         
    \end{axis}
\end{tikzpicture}
\caption{1D examples}
    \label{fig:box1}
\end{subfigure}
\hfill
        \begin{subfigure}[t]{0.45\textwidth}
\begin{tikzpicture}
         \pgfplotstableread[col sep=comma]{visu/tikz_csvs_hist_two_dim/hist_MT/hist.csv}\dataMT  
    \pgfplotstableread[col sep=comma]{visu/tikz_csvs_hist_two_dim/hist_two_dim/hist.csv}\dataTwoDim  
    \pgfplotstableread[col sep=comma]{visu/tikz_csvs_hist_two_dim/hist_KS/hist.csv}\dataKS  
    \pgfplotstableread[col sep=comma]{visu/tikz_csvs_hist_two_dim/hist_KS_ns/hist.csv}\dataKSns  
        \pgfplotstableread[col sep=comma]{visu/tikz_csvs_hist_two_dim/hist_NSV_two_dim/hist.csv}\dataNSV  
 
    \begin{axis}[
        width=0.9\textwidth, height=0.9\textwidth, xlabel={},
        ylabel={$L^2$-error},
        ymin=0,
        boxplot/draw direction=y,
        boxplot/box extend=0.5,
        boxplot/whisker extend=0.2, boxplot/every outlier/.style={mark=*, mark size=2, fill=black}, xtick={1,2,3},
        xticklabels={Example 4,
Example 5, 
        Example 6},
]
        \addplot[boxplot, boxplot/draw position=1, mark=x, mark options={color=black}] table[y index=3]{\dataMT};
       \addplot[boxplot, boxplot/draw position=2, mark=x, mark options={color=black}] table[y index=3]{\dataKS};
       \addplot[boxplot, boxplot/draw position=3, mark=x, mark options={color=black}] table[y index=3]{\dataKSns};
           
        \addplot [scatter, only marks, mark=x, mark options={fill=black, color=black}] table[x expr=1, y index=3]{\dataMT};
        \addplot [scatter, only marks, mark=x, mark options={fill=black, color=black}] table[x expr=2, y index=3]{\dataKS};
        \addplot [scatter, only marks, mark=x, mark options={fill=black, color=black}] table[x expr=3, y index=3]{\dataKSns};
    \end{axis}
\end{tikzpicture}
\caption{2D examples}
    \label{fig:box2}
\end{subfigure}
\caption{Box plots of $L^2$ errors corresponding to 50 different seeds for the 1D and 2D examples. }
    \label{fig:all_boxes}
\end{figure}
We refer to \cite{WeNeedTo} for more details and suggestions of good practices on this topic, and also to \cite{ModelStability} for a study of model stability with respect to random seeds and techniques for improvement (see also references therein).

\corr{
\begin{remark}[Comparison to traditional approaches]
    For the numerical examples considered here, unsurprisingly, traditional approaches such as a Moreau--Yosida path-following with finite element methods  achieve highly accurate results with little cost. The aim of our numerics is to demonstrate the applicability of our theory and the aim of this paper is to provide a theoretically sound neural network framework for VIs; optimising our implementation for high-precision solutions is outside the scope of this work. Furthermore, the value of the approach lies in its mesh-free nature, flexibility and applicability to high-dimensional problems, where classical approaches are less practical. 
\end{remark}
}

    \FloatBarrier

\acks{We thank Guozhi Dong (Central South University, Changsha) and Pavel Dvurechensky (WIAS, Berlin) for helpful discussions. MH was partially supported by the Deutsche Forschungsgemeinschaft (DFG, German Research Foundation) through the DFG SPP 1962 Priority Programme \emph{Non-smooth and Complementarity-based
Distributed Parameter Systems: Simulation and Hierarchical Optimization} within project 10, and partially by the DFG under Germany's Excellence Strategy – The Berlin Mathematics Research Center MATH+ (EXC-2046/1, project ID: 390685689). \corr{We also thank the two anonymous referees for their helpful comments.}}
  
\appendix
\appendix
\section{Proofs}\label{app:proofs}
\emph{Proof of Proposition \ref{prop:VIandCSEquivalence}}. We follow \cite[\S 1:3, p.~4]{Rodrigues}. Let $u$ solve \eqref{eq:CS} with the stated regularity.  Taking $v \in K$, we have, making use of the Gelfand triple structure $\X \subset L^2(\Omega) \subset \X^*$ and that $Au \in L^2(\Omega)$,
\begin{align*}
    \langle Au-f, u-v \rangle &=     \int_\Omega (Au-f)(u-\psi) +     \int_\Omega (Au-f)(\psi-v) \leq 0.
\end{align*}
This shows that $u$ solves \eqref{eq:VI}. Now for the reverse direction, suppose $u$ solves \eqref{eq:VI}. Choose the test function $v=u+\varphi$ where $\varphi \in C_c^\infty(\Omega)$ and $\varphi \geq 0$. Then since $Au \in L^2(\Omega)$,  we get
\[\int_\Omega (Au-f)\varphi \geq 0\]
whence the arbitrariness of $\varphi$ yields $Au-f \geq 0$ a.e. Define the non-coincidence set $I := \{u > \psi\}$ which is an open set since $u$ and $\psi$ are both continuous. Take $\varphi \in C_c^\infty(I)$; it follows that there exists an $\epsilon_0$ with $u\pm \epsilon \varphi \in K$ for all $\epsilon \leq \epsilon_0$. Using this as the choice of test function in \eqref{eq:VI}, we obtain
\[\pm \epsilon \langle Au-f,  \varphi \rangle = \pm \epsilon \int_\Omega (Au-f)\varphi \leq 0\]
whence $Au=f \text{ a.e. in $I$}.$ From this one gets $(Au-f)(u-\psi) = 0$ a.e. in $\Omega$.

\emph{Proof of Lemma \ref{lem:NNisC1_general}}. Consider the $\cF_{\mathrm{DRR}}$ case (the other case will follow by trivial adjustments) and write $u = M \circ \tilde u$. Firstly, observe that $\tilde u\colon \mathbb{R}^{\mathfrak{m}} \times \overline{\Omega} \to \mathbb{R}$ is   continuous as it is the composition of  continuous functions. Since $\mathfrak{B}_i$ is in fact continuously differentiable for all $i$ (it is clear for $i=0, \fd$, and for $i \in \{1, \hdots, \fd-1\}$, this follows by the chain rule and the fact that $\sigma \in C^1(\mathbb{R})$), making use of the chain rule, $\grad \tilde u\colon \mathbb{R}^{\mathfrak{m}} \times \overline{\Omega} \to \mathbb{R}^n$ is also continuous.

Now, to prove the desired continuity, we begin by taking a sequence $\theta_n \to \theta$.  The set $K_1 := (\cup_{n\in\mathbb{N}} \{\theta_n\}) \cup \{\theta\}$ is a compact set and hence $\tilde u\colon \overline{\Omega} \times K_1 \to \mathbb{R}$ is uniformly continuous.  For every $\delta > 0$, if $n$ is sufficiently large, we get $|\theta_n-\theta| \leq \delta$. Fix $\epsilon>0$. It follows by uniform continuity that there exists $N_0 \in \mathbb{N}$ such that if $n \geq N_0$, then 
\[|\tilde u(x, \theta_n)-\tilde u(x,\theta)| \leq \epsilon,\]
uniformly in $x$. Here we can take the supremum over $x$ and conclude that $\theta \mapsto \tilde u(\theta, \cdot)$ is continuous as a map from $\mathbb{R}^{\mathfrak{m}}$ to $C^0(\overline{\Omega})$. Using the continuity of $(\theta, x) \mapsto \grad \tilde u(\theta, x)$, we obtain continuity into $C^1(\overline{\Omega})$ by a similar argument.  The continuity of $M$ finishes the argument.

\vskip 0.2in
\bibliography{main}

\begin{thebibliography}{73}
\providecommand{\natexlab}[1]{#1}
\providecommand{\url}[1]{\texttt{#1}}
\expandafter\ifx\csname urlstyle\endcsname\relax
  \providecommand{\doi}[1]{doi: #1}\else
  \providecommand{\doi}{doi: \begingroup \urlstyle{rm}\Url}\fi

\bibitem[Adams and Fournier(2003)]{MR2424078}
R.~A. Adams and J.~J.~F. Fournier.
\newblock \emph{Sobolev spaces}, volume 140 of \emph{Pure and Applied
  Mathematics (Amsterdam)}.
\newblock Elsevier/Academic Press, Amsterdam, second edition, 2003.
\newblock ISBN 0-12-044143-8.

\bibitem[Aizawa et~al.(2024)Aizawa, Kimura, and Matsui]{Yuto}
Y.~Aizawa, M.~Kimura, and K.~Matsui.
\newblock Universal approximation properties for an odenet and a resnet:
  Mathematical analysis and numerical experiments.
\newblock \emph{Discrete and Continuous Dynamical Systems - B}, 29\penalty0
  (1):\penalty0 351--376, 2024.
\newblock ISSN 1531-3492.
\newblock \doi{10.3934/dcdsb.2023099}.
\newblock URL
  \url{https://www.aimsciences.org/article/id/646c64474a9fed1ce4f92c10}.

\bibitem[Akiba et~al.(2019)Akiba, Sano, Yanase, Ohta, and Koyama]{optuna_2019}
T.~Akiba, S.~Sano, T.~Yanase, T.~Ohta, and M.~Koyama.
\newblock Optuna: A next-generation hyperparameter optimization framework.
\newblock In \emph{Proceedings of the 25th {ACM} {SIGKDD} International
  Conference on Knowledge Discovery and Data Mining}, 2019.

\bibitem[Alphonse et~al.(2024)Alphonse, Kister, and Lun]{github}
A.~Alphonse, A.~Kister, and C.~H. Lun.
\newblock {NNVI: Code for the paper}.
\newblock \url{https://github.com/amal-alphonse/NNVI}, 2024.
\newblock URL \url{https://github.com/amal-alphonse/NNVI}.

\bibitem[Auchmuty(1989)]{Auchmuty}
G.~Auchmuty.
\newblock Variational principles for variational inequalities.
\newblock \emph{Numer. Funct. Anal. Optim.}, 10\penalty0 (9-10):\penalty0
  863--874, 1989.
\newblock ISSN 0163-0563,1532-2467.
\newblock \doi{10.1080/01630568908816335}.
\newblock URL \url{https://doi.org/10.1080/01630568908816335}.

\bibitem[Auslender(1976)]{Auslender}
A.~Auslender.
\newblock \emph{Optimisation}.
\newblock Masson, Paris-New York-Barcelona, 1976.
\newblock M\'{e}thodes num\'{e}riques, Ma\^{i}trise de Math\'{e}matiques et
  Applications Fondamentales.

\bibitem[Bahja et~al.(2023)Bahja, Hauffen, Jung, Bah, and
  Karambal]{bahja2023physicsinformedneuralnetworkframework}
H.~E. Bahja, J.~C. Hauffen, P.~Jung, B.~Bah, and I.~Karambal.
\newblock A physics-informed neural network framework for modeling
  obstacle-related equations, 2023.
\newblock URL \url{https://arxiv.org/abs/2304.03552}.

\bibitem[Bao et~al.(2020)Bao, Ye, Zang, and Zhou]{Bao}
G.~Bao, X.~Ye, Y.~Zang, and H.~Zhou.
\newblock Numerical solution of inverse problems by weak adversarial networks.
\newblock \emph{Inverse Problems}, 36\penalty0 (11):\penalty0 115003, 31, 2020.
\newblock ISSN 0266-5611.
\newblock \doi{10.1088/1361-6420/abb447}.
\newblock URL \url{https://doi.org/10.1088/1361-6420/abb447}.

\bibitem[Bartels(2015)]{Bartels}
S.~Bartels.
\newblock \emph{Numerical methods for nonlinear partial differential
  equations}, volume~47 of \emph{Springer Series in Computational Mathematics}.
\newblock Springer, Cham, 2015.
\newblock \doi{10.1007/978-3-319-13797-1}.
\newblock URL \url{https://doi.org/10.1007/978-3-319-13797-1}.

\bibitem[Bertoluzza et~al.(2024)Bertoluzza, Burman, and
  He]{bertoluzza2024bestapproximationresultsessential}
S.~Bertoluzza, E.~Burman, and C.~He.
\newblock W{AN} discretization of {PDE}s: best approximation, stabilization,
  and essential boundary conditions.
\newblock \emph{SIAM J. Sci. Comput.}, 46\penalty0 (6):\penalty0 C688--C715,
  2024.
\newblock ISSN 1064-8275,1095-7197.
\newblock \doi{10.1137/23M1588196}.
\newblock URL \url{https://doi.org/10.1137/23M1588196}.

\bibitem[Bethard(2022)]{WeNeedTo}
S.~Bethard.
\newblock We need to talk about random seeds.
\newblock \emph{arXiv e-prints}, art. arXiv:2210.13393, 10 2022.
\newblock \doi{10.48550/arXiv.2210.13393}.

\bibitem[Brevis et~al.(2022)Brevis, Muga, and {van der Zee}]{BREVIS2022115716}
I.~Brevis, I.~Muga, and K.~G. {van der Zee}.
\newblock Neural control of discrete weak formulations: Galerkin, least squares
  \& minimal-residual methods with quasi-optimal weights.
\newblock \emph{Computer Methods in Applied Mechanics and Engineering},
  402:\penalty0 115716, 2022.
\newblock ISSN 0045-7825.
\newblock \doi{https://doi.org/10.1016/j.cma.2022.115716}.
\newblock URL
  \url{https://www.sciencedirect.com/science/article/pii/S0045782522006715}.
\newblock A Special Issue in Honor of the Lifetime Achievements of J. Tinsley
  Oden.

\bibitem[Cheng et~al.(2023)Cheng, Shen, Wang, and Liang]{DeepNeural}
X.~Cheng, X.~Shen, X.~Wang, and K.~Liang.
\newblock A deep neural network-based method for solving obstacle problems.
\newblock \emph{Nonlinear Anal. Real World Appl.}, 72:\penalty0 Paper No.
  103864, 16, 2023.
\newblock ISSN 1468-1218.
\newblock \doi{10.1016/j.nonrwa.2023.103864}.
\newblock URL \url{https://doi.org/10.1016/j.nonrwa.2023.103864}.

\bibitem[Christof and Meyer(2018)]{CM}
C.~Christof and C.~Meyer.
\newblock A note on a priori {$L^p$}-error estimates for the obstacle problem.
\newblock \emph{Numer. Math.}, 139\penalty0 (1):\penalty0 27--45, 2018.
\newblock ISSN 0029-599X,0945-3245.
\newblock \doi{10.1007/s00211-017-0931-5}.
\newblock URL \url{https://doi.org/10.1007/s00211-017-0931-5}.

\bibitem[Combettes and Pesquet(2012)]{CombettesPesquet2012}
P.~L. Combettes and J.-C. Pesquet.
\newblock Primal-dual splitting algorithm for solving inclusions with mixtures
  of composite, lipschitzian, and parallel-sum type monotone operators.
\newblock \emph{Set-Valued and Variational Analysis}, 20\penalty0 (2):\penalty0
  307--330, 2012.
\newblock \doi{10.1007/s11228-011-0191-y}.

\bibitem[Dondl et~al.(2022)Dondl, M\"uller, and Zeinhofer]{MR4455184}
P.~Dondl, J.~M\"uller, and M.~Zeinhofer.
\newblock Uniform convergence guarantees for the deep {R}itz method for
  nonlinear problems.
\newblock \emph{Adv. Contin. Discrete Models}, pages Paper No. 49, 19, 2022.
\newblock ISSN 2731-4235.
\newblock \doi{10.1186/s13662-022-03722-8}.
\newblock URL \url{https://doi.org/10.1186/s13662-022-03722-8}.

\bibitem[E and Yu(2018)]{DeepRitz}
W.~E and B.~Yu.
\newblock The deep {R}itz method: a deep learning-based numerical algorithm for
  solving variational problems.
\newblock \emph{Commun. Math. Stat.}, 6\penalty0 (1):\penalty0 1--12, 2018.
\newblock ISSN 2194-6701.
\newblock \doi{10.1007/s40304-018-0127-z}.
\newblock URL \url{https://doi.org/10.1007/s40304-018-0127-z}.

\bibitem[Ekeland and T\'{e}mam(1999)]{EkelandTemam}
I.~Ekeland and R.~T\'{e}mam.
\newblock \emph{Convex analysis and variational problems}, volume~28 of
  \emph{Classics in Applied Mathematics}.
\newblock Society for Industrial and Applied Mathematics (SIAM), Philadelphia,
  PA, english edition, 1999.
\newblock ISBN 0-89871-450-8.
\newblock \doi{10.1137/1.9781611971088}.
\newblock URL \url{https://doi.org/10.1137/1.9781611971088}.
\newblock Translated from the French.

\bibitem[Fukushima(1992)]{Fukushima}
M.~Fukushima.
\newblock Equivalent differentiable optimization problems and descent methods
  for asymmetric variational inequality problems.
\newblock \emph{Math. Programming}, 53\penalty0 (1):\penalty0 99--110, 1992.
\newblock ISSN 0025-5610,1436-4646.
\newblock \doi{10.1007/BF01585696}.
\newblock URL \url{https://doi.org/10.1007/BF01585696}.

\bibitem[Glowinski(1984)]{MR0737005}
R.~Glowinski.
\newblock \emph{Numerical methods for nonlinear variational problems}.
\newblock Springer Series in Computational Physics. Springer-Verlag, New York,
  1984.
\newblock ISBN 0-387-12434-9.
\newblock \doi{10.1007/978-3-662-12613-4}.
\newblock URL \url{https://doi.org/10.1007/978-3-662-12613-4}.

\bibitem[Glowinski et~al.(1981)Glowinski, Lions, and
  Tr\'{e}moli\`eres]{Glowinski}
R.~Glowinski, J.-L. Lions, and R.~Tr\'{e}moli\`eres.
\newblock \emph{Numerical analysis of variational inequalities}, volume~8 of
  \emph{Studies in Mathematics and its Applications}.
\newblock North-Holland Publishing Co., Amsterdam-New York, 1981.
\newblock ISBN 0-444-86199-8.
\newblock Translated from the French.

\bibitem[Gühring and Raslan(2021)]{GUHRING2021107}
I.~Gühring and M.~Raslan.
\newblock Approximation rates for neural networks with encodable weights in
  smoothness spaces.
\newblock \emph{Neural Networks}, 134:\penalty0 107--130, 2021.
\newblock ISSN 0893-6080.
\newblock \doi{https://doi.org/10.1016/j.neunet.2020.11.010}.
\newblock URL
  \url{https://www.sciencedirect.com/science/article/pii/S0893608020303956}.

\bibitem[He et~al.(2016{\natexlab{a}})He, Zhang, Ren, and Sun]{DeepResImaging}
K.~He, X.~Zhang, S.~Ren, and J.~Sun.
\newblock Deep residual learning for image recognition.
\newblock In \emph{2016 IEEE Conference on Computer Vision and Pattern
  Recognition (CVPR)}, pages 770--778, 2016{\natexlab{a}}.
\newblock \doi{10.1109/CVPR.2016.90}.

\bibitem[He et~al.(2016{\natexlab{b}})He, Zhang, Ren, and Sun]{HeEtAll}
K.~He, X.~Zhang, S.~Ren, and J.~Sun.
\newblock Identity mappings in deep residual networks.
\newblock In B.~Leibe, J.~Matas, N.~Sebe, and M.~Welling, editors,
  \emph{Computer Vision -- ECCV 2016}, pages 630--645, Cham,
  2016{\natexlab{b}}. Springer International Publishing.
\newblock ISBN 978-3-319-46493-0.

\bibitem[Hieber and Wood(2007)]{MR2333653}
M.~Hieber and I.~Wood.
\newblock The {D}irichlet problem in convex bounded domains for operators in
  non-divergence form with {$L^\infty$}-coefficients.
\newblock \emph{Differential Integral Equations}, 20\penalty0 (7):\penalty0
  721--734, 2007.
\newblock ISSN 0893-4983.

\bibitem[Hinterm\"uller and Kunisch(2006)]{MR2219149}
M.~Hinterm\"uller and K.~Kunisch.
\newblock Path-following methods for a class of constrained minimization
  problems in function space.
\newblock \emph{SIAM J. Optim.}, 17\penalty0 (1):\penalty0 159--187, 2006.
\newblock ISSN 1052-6234,1095-7189.
\newblock \doi{10.1137/040611598}.
\newblock URL \url{https://doi.org/10.1137/040611598}.

\bibitem[Hinterm\"uller and Laurain(2010)]{MR2653723}
M.~Hinterm\"uller and A.~Laurain.
\newblock A shape and topology optimization technique for solving a class of
  linear complementarity problems in function space.
\newblock \emph{Comput. Optim. Appl.}, 46\penalty0 (3):\penalty0 535--569,
  2010.
\newblock ISSN 0926-6003,1573-2894.
\newblock \doi{10.1007/s10589-008-9201-x}.
\newblock URL \url{https://doi.org/10.1007/s10589-008-9201-x}.

\bibitem[Hinterm\"uller and Laurain(2011)]{MR2806573}
M.~Hinterm\"uller and A.~Laurain.
\newblock Optimal shape design subject to elliptic variational inequalities.
\newblock \emph{SIAM J. Control Optim.}, 49\penalty0 (3):\penalty0 1015--1047,
  2011.
\newblock ISSN 0363-0129,1095-7138.
\newblock \doi{10.1137/080745134}.
\newblock URL \url{https://doi.org/10.1137/080745134}.

\bibitem[Hinterm\"uller et~al.(2002)Hinterm\"uller, Ito, and
  Kunisch]{MR1972219}
M.~Hinterm\"uller, K.~Ito, and K.~Kunisch.
\newblock The primal-dual active set strategy as a semismooth {N}ewton method.
\newblock \emph{SIAM J. Optim.}, 13\penalty0 (3):\penalty0 865--888, 2002.
\newblock ISSN 1052-6234,1095-7189.
\newblock \doi{10.1137/S1052623401383558}.
\newblock URL \url{https://doi.org/10.1137/S1052623401383558}.

\bibitem[Hoppe(1987)]{MR0909064}
R.~H.~W. Hoppe.
\newblock Multigrid algorithms for variational inequalities.
\newblock \emph{SIAM J. Numer. Anal.}, 24\penalty0 (5):\penalty0 1046--1065,
  1987.
\newblock ISSN 0036-1429.
\newblock \doi{10.1137/0724069}.
\newblock URL \url{https://doi.org/10.1137/0724069}.

\bibitem[Huang et~al.(2022)Huang, Wang, and Wang]{MR4407899}
J.~Huang, C.~Wang, and H.~Wang.
\newblock A deep learning method for elliptic hemivariational inequalities.
\newblock \emph{East Asian J. Appl. Math.}, 12\penalty0 (3):\penalty0 487--502,
  2022.
\newblock ISSN 2079-7362.
\newblock \doi{10.4208/eajam.081121.161121}.
\newblock URL \url{https://doi.org/10.4208/eajam.081121.161121}.

\bibitem[Hung et~al.(2020)Hung, Mig\'{o}rski, Tam, and Zeng]{Migorski}
N.~V. Hung, S.~Mig\'{o}rski, V.~M. Tam, and S.~Zeng.
\newblock Gap functions and error bounds for variational-hemivariational
  inequalities.
\newblock \emph{Acta Appl. Math.}, 169:\penalty0 691--709, 2020.
\newblock ISSN 0167-8019.
\newblock \doi{10.1007/s10440-020-00319-9}.
\newblock URL \url{https://doi.org/10.1007/s10440-020-00319-9}.

\bibitem[Hwang and Lim(2024)]{DBLP:conf/nips/HwangL24}
Y.~Hwang and D.~Lim.
\newblock Dual cone gradient descent for training physics-informed neural
  networks.
\newblock In A.~Globersons, L.~Mackey, D.~Belgrave, A.~Fan, U.~Paquet, J.~M.
  Tomczak, and C.~Zhang, editors, \emph{Advances in Neural Information
  Processing Systems 38: Annual Conference on Neural Information Processing
  Systems 2024, NeurIPS 2024, Vancouver, BC, Canada, December 10 - 15, 2024},
  2024.
\newblock URL
  \url{http://papers.nips.cc/paper\_files/paper/2024/hash/b2b781badeeb49896c4b324c466ec442-Abstract-Conference.html}.

\bibitem[Ito and Kunisch(2003{\natexlab{a}})]{ItoKunisch2003}
K.~Ito and K.~Kunisch.
\newblock Semi-smooth {N}ewton methods for variational inequalities of the
  first kind.
\newblock \emph{ESAIM: Mathematical Modelling and Numerical Analysis},
  37\penalty0 (1):\penalty0 41--62, 2003{\natexlab{a}}.
\newblock \doi{10.1051/m2an:2003021}.

\bibitem[Ito and Kunisch(2003{\natexlab{b}})]{MR1972649}
K.~Ito and K.~Kunisch.
\newblock Semi-smooth {N}ewton methods for variational inequalities of the
  first kind.
\newblock \emph{M2AN Math. Model. Numer. Anal.}, 37\penalty0 (1):\penalty0
  41--62, 2003{\natexlab{b}}.
\newblock ISSN 0764-583X,1290-3841.
\newblock \doi{10.1051/m2an:2003021}.
\newblock URL \url{https://doi.org/10.1051/m2an:2003021}.

\bibitem[Jiang and Chen(2023)]{JiangChen}
J.~Jiang and X.~Chen.
\newblock Optimality conditions for nonsmooth nonconvex-nonconcave min-max
  problems and generative adversarial networks.
\newblock \emph{SIAM J. Math. Data Sci.}, 5\penalty0 (3):\penalty0 693--722,
  2023.
\newblock ISSN 2577-0187.
\newblock \doi{10.1137/22M1482238}.
\newblock URL \url{https://doi.org/10.1137/22M1482238}.

\bibitem[Jiao et~al.(2023)Jiao, Lai, Wang, Yang, and Yang]{Jiao2023}
Y.~Jiao, Y.~Lai, Y.~Wang, H.~Yang, and Y.~Yang.
\newblock \emph{Convergence Analysis of the Deep Galerkin Method for Weak
  Solutions}, pages 53--82.
\newblock Springer International Publishing, Cham, 2023.
\newblock ISBN 978-3-031-37800-3.
\newblock \doi{10.1007/978-3-031-37800-3_4}.
\newblock URL \url{https://doi.org/10.1007/978-3-031-37800-3_4}.

\bibitem[Jin et~al.(2020)Jin, Netrapalli, and Jordan]{Jordan}
C.~Jin, P.~Netrapalli, and M.~Jordan.
\newblock What is local optimality in nonconvex-nonconcave minimax
  optimization?
\newblock In H.~D. III and A.~Singh, editors, \emph{Proceedings of the 37th
  International Conference on Machine Learning}, volume 119 of
  \emph{Proceedings of Machine Learning Research}, pages 4880--4889. PMLR, 7
  2020.
\newblock URL \url{https://proceedings.mlr.press/v119/jin20e.html}.

\bibitem[K\"arkk\"ainen et~al.(2003)K\"arkk\"ainen, Kunisch, and
  Tarvainen]{MR2026461}
T.~K\"arkk\"ainen, K.~Kunisch, and P.~Tarvainen.
\newblock Augmented {L}agrangian active set methods for obstacle problems.
\newblock \emph{J. Optim. Theory Appl.}, 119\penalty0 (3):\penalty0 499--533,
  2003.
\newblock ISSN 0022-3239,1573-2878.
\newblock \doi{10.1023/B:JOTA.0000006687.57272.b6}.
\newblock URL \url{https://doi.org/10.1023/B:JOTA.0000006687.57272.b6}.

\bibitem[Keith and Surowiec(2026)]{KS}
B.~Keith and T.~M. Surowiec.
\newblock Proximal {G}alerkin: a structure-preserving finite element method for
  pointwise bound constraints.
\newblock \emph{Found. Comput. Math.}, 26\penalty0 (1):\penalty0 385--481,
  2026.
\newblock ISSN 1615-3375,1615-3383.
\newblock \doi{10.1007/s10208-024-09681-8}.
\newblock URL \url{https://doi.org/10.1007/s10208-024-09681-8}.

\bibitem[Kinderlehrer and Stampacchia(1980)]{MR0567696}
D.~Kinderlehrer and G.~Stampacchia.
\newblock \emph{An introduction to variational inequalities and their
  applications}, volume~88 of \emph{Pure and Applied Mathematics}.
\newblock Academic Press, Inc. [Harcourt Brace Jovanovich, Publishers], New
  York-London, 1980.
\newblock ISBN 0-12-407350-6.

\bibitem[Kornhuber(1994)]{MR1310316}
R.~Kornhuber.
\newblock Monotone multigrid methods for elliptic variational inequalities.
  {I}.
\newblock \emph{Numer. Math.}, 69\penalty0 (2):\penalty0 167--184, 1994.
\newblock ISSN 0029-599X,0945-3245.
\newblock \doi{10.1007/BF03325426}.
\newblock URL \url{https://doi.org/10.1007/BF03325426}.

\bibitem[Lagaris et~al.(1998)Lagaris, Likas, and Fotiadis]{Lagaris98}
I.~Lagaris, A.~Likas, and D.~Fotiadis.
\newblock Artificial neural networks for solving ordinary and partial
  differential equations.
\newblock \emph{IEEE Transactions on Neural Networks}, 9\penalty0 (5):\penalty0
  987--1000, 1998.
\newblock \doi{10.1109/72.712178}.

\bibitem[Larsson and Patriksson(1994)]{MR1274172}
T.~Larsson and M.~Patriksson.
\newblock A class of gap functions for variational inequalities.
\newblock \emph{Math. Programming}, 64\penalty0 (1):\penalty0 53--79, 1994.
\newblock ISSN 0025-5610,1436-4646.
\newblock \doi{10.1007/BF01582565}.
\newblock URL \url{https://doi.org/10.1007/BF01582565}.

\bibitem[Li et~al.(2022)Li, Lin, and Shen]{li2022deep}
Q.~Li, T.~Lin, and Z.~Shen.
\newblock Deep learning via dynamical systems: An approximation perspective.
\newblock \emph{Journal of the European Mathematical Society}, 25\penalty0
  (5):\penalty0 1671--1709, 2022.

\bibitem[Lin and Jegelka(2018)]{NEURIPS2018_03bfc1d4}
H.~Lin and S.~Jegelka.
\newblock Resnet with one-neuron hidden layers is a universal approximator.
\newblock In S.~Bengio, H.~Wallach, H.~Larochelle, K.~Grauman, N.~Cesa-Bianchi,
  and R.~Garnett, editors, \emph{Advances in Neural Information Processing
  Systems}, volume~31. Curran Associates, Inc., 2018.
\newblock URL
  \url{https://proceedings.neurips.cc/paper_files/paper/2018/file/03bfc1d4783966c69cc6aef8247e0103-Paper.pdf}.

\bibitem[Lin et~al.(2020)Lin, Jin, and Jordan]{lin2020gradient}
T.~Lin, C.~Jin, and M.~Jordan.
\newblock On gradient descent ascent for nonconvex-concave minimax problems.
\newblock In H.~D. III and A.~Singh, editors, \emph{Proceedings of the 37th
  International Conference on Machine Learning}, volume 119 of
  \emph{Proceedings of Machine Learning Research}, pages 6083--6093. PMLR,
  13--18 Jul 2020.
\newblock URL \url{https://proceedings.mlr.press/v119/lin20a.html}.

\bibitem[Liu et~al.(2024)Liu, Liang, and Chen]{liu2024characterizing}
C.~Liu, E.~Liang, and M.~Chen.
\newblock Characterizing {R}es{N}et’s universal approximation capability.
\newblock In R.~Salakhutdinov, Z.~Kolter, K.~Heller, A.~Weller, N.~Oliver,
  J.~Scarlett, and F.~Berkenkamp, editors, \emph{Proceedings of the 41st
  International Conference on Machine Learning}, volume 235 of
  \emph{Proceedings of Machine Learning Research}, pages 31477--31515. PMLR,
  21--27 Jul 2024.
\newblock URL \url{https://proceedings.mlr.press/v235/liu24am.html}.

\bibitem[Loshchilov and Hutter(2019)]{loshchilov2018decoupled}
I.~Loshchilov and F.~Hutter.
\newblock Decoupled weight decay regularization.
\newblock In \emph{International Conference on Learning Representations}, 2019.
\newblock URL \url{https://openreview.net/forum?id=Bkg6RiCqY7}.

\bibitem[Madhyastha and Jain(2019)]{ModelStability}
P.~Madhyastha and R.~Jain.
\newblock On model stability as a function of random seed.
\newblock In M.~Bansal and A.~Villavicencio, editors, \emph{Proceedings of the
  23rd Conference on Computational Natural Language Learning (CoNLL)}, pages
  929--939, Hong Kong, China, Nov. 2019. Association for Computational
  Linguistics.
\newblock \doi{10.18653/v1/K19-1087}.
\newblock URL \url{https://aclanthology.org/K19-1087}.

\bibitem[Meyer and Thoma(2013)]{MT}
C.~Meyer and O.~Thoma.
\newblock A priori finite element error analysis for optimal control of the
  obstacle problem.
\newblock \emph{SIAM J. Numer. Anal.}, 51\penalty0 (1):\penalty0 605--628,
  2013.
\newblock ISSN 0036-1429,1095-7170.
\newblock \doi{10.1137/110836092}.
\newblock URL \url{https://doi.org/10.1137/110836092}.

\bibitem[Mokhtari et~al.(2020)Mokhtari, Ozdaglar, and
  Pattathil]{MokhtariOzdaglarPattathil2020}
A.~Mokhtari, A.~Ozdaglar, and S.~Pattathil.
\newblock A unified analysis of extra-gradient and optimistic gradient methods
  for saddle point problems: Proximal point approach.
\newblock In \emph{Proceedings of the Twenty Third International Conference on
  Artificial Intelligence and Statistics}, volume 108 of \emph{Proceedings of
  Machine Learning Research}, pages 1497--1507. PMLR, 2020.
\newblock URL \url{https://proceedings.mlr.press/v108/mokhtari20a.html}.

\bibitem[Nemirovski(2004)]{Nemirovski2004}
A.~Nemirovski.
\newblock Prox-method with rate of convergence {$\mathcal{O}(1/t)$} for
  variational inequalities with lipschitz continuous monotone operators and
  smooth convex-concave saddle point problems.
\newblock \emph{SIAM Journal on Optimization}, 15\penalty0 (1):\penalty0
  229--251, 2004.
\newblock \doi{10.1137/S1052623403425629}.

\bibitem[Paszke et~al.(2019)Paszke, Gross, Massa, Lerer, Bradbury, Chanan,
  Killeen, Lin, Gimelshein, Antiga, Desmaison, Kopf, Yang, DeVito, Raison,
  Tejani, Chilamkurthy, Steiner, Fang, Bai, and Chintala]{NEURIPS2019_9015}
A.~Paszke, S.~Gross, F.~Massa, A.~Lerer, J.~Bradbury, G.~Chanan, T.~Killeen,
  Z.~Lin, N.~Gimelshein, L.~Antiga, A.~Desmaison, A.~Kopf, E.~Yang, Z.~DeVito,
  M.~Raison, A.~Tejani, S.~Chilamkurthy, B.~Steiner, L.~Fang, J.~Bai, and
  S.~Chintala.
\newblock Pytorch: An imperative style, high-performance deep learning library.
\newblock In \emph{Advances in Neural Information Processing Systems 32}, pages
  8024--8035. Curran Associates, Inc., 2019.
\newblock URL
  \url{http://papers.neurips.cc/paper/9015-pytorch-an-imperative-style-high-performance-deep-learning-library.pdf}.

\bibitem[Raissi et~al.(2019)Raissi, Perdikaris, and Karniadakis]{RAISSI2019686}
M.~Raissi, P.~Perdikaris, and G.~Karniadakis.
\newblock Physics-informed neural networks: A deep learning framework for
  solving forward and inverse problems involving nonlinear partial differential
  equations.
\newblock \emph{Journal of Computational Physics}, 378:\penalty0 686--707,
  2019.
\newblock ISSN 0021-9991.
\newblock \doi{https://doi.org/10.1016/j.jcp.2018.10.045}.
\newblock URL
  \url{https://www.sciencedirect.com/science/article/pii/S0021999118307125}.

\bibitem[Razaviyayn et~al.(2020)Razaviyayn, Huang, Lu, Nouiehed, Sanjabi, and
  Hong]{razaviyayn2020nonconvex}
M.~Razaviyayn, T.~Huang, S.~Lu, M.~Nouiehed, M.~Sanjabi, and M.~Hong.
\newblock Nonconvex min-max optimization: Applications, challenges, and recent
  theoretical advances.
\newblock \emph{IEEE Signal Processing Magazine}, 37\penalty0 (5):\penalty0
  55--66, 2020.

\bibitem[Rodrigues(1987)]{Rodrigues}
J.-F. Rodrigues.
\newblock \emph{Obstacle problems in mathematical physics}, volume 134 of
  \emph{North-Holland Mathematics Studies}.
\newblock North-Holland Publishing Co., Amsterdam, 1987.
\newblock ISBN 0-444-70187-7.
\newblock Notas de Matem\'{a}tica [Mathematical Notes], 114.

\bibitem[Schwab and Stein(2022)]{MR4434275}
C.~Schwab and A.~Stein.
\newblock Deep solution operators for variational inequalities via proximal
  neural networks.
\newblock \emph{Res. Math. Sci.}, 9\penalty0 (3):\penalty0 Paper No. 36, 35,
  2022.
\newblock ISSN 2522-0144,2197-9847.
\newblock \doi{10.1007/s40687-022-00327-1}.
\newblock URL \url{https://doi.org/10.1007/s40687-022-00327-1}.

\bibitem[Shin et~al.(2023)Shin, Zhang, and Karniadakis]{Shin2023}
Y.~Shin, Z.~Zhang, and G.~E. Karniadakis.
\newblock Error estimates of residual minimization using neural networks for
  linear pdes.
\newblock \emph{Journal of Machine Learning for Modeling and Computing},
  4\penalty0 (4):\penalty0 73--101, 2023.
\newblock ISSN 2689-3967.

\bibitem[{Shugart} and {Altschuler}(2025)]{2025arXiv250501423S}
H.~{Shugart} and J.~M. {Altschuler}.
\newblock {Negative Stepsizes Make Gradient-Descent-Ascent Converge}.
\newblock \emph{arXiv e-prints}, art. arXiv:2505.01423, May 2025.
\newblock \doi{10.48550/arXiv.2505.01423}.

\bibitem[Sofonea and Matei(2009)]{MR2488869}
M.~Sofonea and A.~Matei.
\newblock \emph{Variational inequalities with applications}, volume~18 of
  \emph{Advances in Mechanics and Mathematics}.
\newblock Springer, New York, 2009.
\newblock ISBN 978-0-387-87459-3.
\newblock A study of antiplane frictional contact problems.

\bibitem[Sukumar and Srivastava(2022)]{SUKUMAR2022114333}
N.~Sukumar and A.~Srivastava.
\newblock Exact imposition of boundary conditions with distance functions in
  physics-informed deep neural networks.
\newblock \emph{Computer Methods in Applied Mechanics and Engineering},
  389:\penalty0 114333, 2022.
\newblock ISSN 0045-7825.
\newblock \doi{https://doi.org/10.1016/j.cma.2021.114333}.
\newblock URL
  \url{https://www.sciencedirect.com/science/article/pii/S0045782521006186}.

\bibitem[Valsecchi~Oliva et~al.(2022)Valsecchi~Oliva, Wu, He, and
  Ni]{MR4416735}
P.~Valsecchi~Oliva, Y.~Wu, C.~He, and H.~Ni.
\newblock Towards fast weak adversarial training to solve high dimensional
  parabolic partial differential equations using {XNODE}-{WAN}.
\newblock \emph{J. Comput. Phys.}, 463:\penalty0 Paper No. 111233, 17, 2022.
\newblock ISSN 0021-9991,1090-2716.
\newblock \doi{10.1016/j.jcp.2022.111233}.
\newblock URL \url{https://doi.org/10.1016/j.jcp.2022.111233}.

\bibitem[Wang et~al.(2010)Wang, Han, and Cheng]{MR2670002}
F.~Wang, W.~Han, and X.-L. Cheng.
\newblock Discontinuous {G}alerkin methods for solving elliptic variational
  inequalities.
\newblock \emph{SIAM J. Numer. Anal.}, 48\penalty0 (2):\penalty0 708--733,
  2010.
\newblock ISSN 0036-1429,1095-7170.
\newblock \doi{10.1137/09075891X}.
\newblock URL \url{https://doi.org/10.1137/09075891X}.

\bibitem[Wang et~al.(2021)Wang, Teng, and
  Perdikaris]{DBLP:journals/siamsc/WangTP21}
S.~Wang, Y.~Teng, and P.~Perdikaris.
\newblock Understanding and mitigating gradient flow pathologies in
  physics-informed neural networks.
\newblock \emph{{SIAM} J. Sci. Comput.}, 43\penalty0 (5):\penalty0
  A3055--A3081, 2021.
\newblock \doi{10.1137/20M1318043}.
\newblock URL \url{https://doi.org/10.1137/20M1318043}.

\bibitem[Wloka(1987)]{MR895589}
J.~Wloka.
\newblock \emph{Partial differential equations}.
\newblock Cambridge University Press, Cambridge, 1987.
\newblock \doi{10.1017/CBO9781139171755}.
\newblock URL \url{https://doi.org/10.1017/CBO9781139171755}.
\newblock Translated from the German by C. B. Thomas and M. J. Thomas.

\bibitem[Wu et~al.(2019)Wu, Shen, and {van den Hengel}]{WU2019119}
Z.~Wu, C.~Shen, and A.~{van den Hengel}.
\newblock Wider or deeper: Revisiting the resnet model for visual recognition.
\newblock \emph{Pattern Recognition}, 90:\penalty0 119--133, 2019.
\newblock ISSN 0031-3203.
\newblock \doi{https://doi.org/10.1016/j.patcog.2019.01.006}.
\newblock URL
  \url{https://www.sciencedirect.com/science/article/pii/S0031320319300135}.

\bibitem[Yamashita and Fukushima(1997)]{MR1430295}
N.~Yamashita and M.~Fukushima.
\newblock Equivalent unconstrained minimization and global error bounds for
  variational inequality problems.
\newblock \emph{SIAM J. Control Optim.}, 35\penalty0 (1):\penalty0 273--284,
  1997.
\newblock ISSN 0363-0129.
\newblock \doi{10.1137/S0363012994277645}.
\newblock URL \url{https://doi.org/10.1137/S0363012994277645}.

\bibitem[Yao et~al.(2023)Yao, Su, Hao, Liu, Su, and
  Zhu]{DBLP:conf/icml/YaoSHL0Z23}
J.~Yao, C.~Su, Z.~Hao, S.~Liu, H.~Su, and J.~Zhu.
\newblock Multiadam: Parameter-wise scale-invariant optimizer for multiscale
  training of physics-informed neural networks.
\newblock In A.~Krause, E.~Brunskill, K.~Cho, B.~Engelhardt, S.~Sabato, and
  J.~Scarlett, editors, \emph{International Conference on Machine Learning,
  {ICML} 2023, 23-29 July 2023, Honolulu, Hawaii, {USA}}, Proceedings of
  Machine Learning Research, pages 39702--39721. {PMLR}, 2023.
\newblock URL \url{https://proceedings.mlr.press/v202/yao23c.html}.

\bibitem[Zamani et~al.(2024)Zamani, Abbaszadehpeivasti, and {de
  Klerk}]{ZamaniAbbaszadehpeivastiDeKlerk2024}
M.~Zamani, H.~Abbaszadehpeivasti, and E.~{de Klerk}.
\newblock Convergence rate analysis of the gradient descent--ascent method for
  convex--concave saddle-point problems.
\newblock \emph{Optimization Methods and Software}, 39\penalty0 (5):\penalty0
  967--989, 2024.
\newblock \doi{10.1080/10556788.2024.2360040}.

\bibitem[Zang et~al.(2020)Zang, Bao, Ye, and Zhou]{Zang}
Y.~Zang, G.~Bao, X.~Ye, and H.~Zhou.
\newblock Weak adversarial networks for high-dimensional partial differential
  equations.
\newblock \emph{J. Comput. Phys.}, 411:\penalty0 109409, 14, 2020.
\newblock ISSN 0021-9991.
\newblock \doi{10.1016/j.jcp.2020.109409}.
\newblock URL \url{https://doi.org/10.1016/j.jcp.2020.109409}.

\bibitem[Zeidler(1986)]{ZeidlerI}
E.~Zeidler.
\newblock \emph{Nonlinear functional analysis and its applications. {I}}.
\newblock Springer-Verlag, New York, 1986.
\newblock ISBN 0-387-90914-1.
\newblock \doi{10.1007/978-1-4612-4838-5}.
\newblock URL \url{https://doi.org/10.1007/978-1-4612-4838-5}.
\newblock Fixed-point theorems, Translated from the German by Peter R. Wadsack.

\bibitem[Zhao et~al.(2022)Zhao, Hao, and Hu]{TwoNN}
X.~E. Zhao, W.~Hao, and B.~Hu.
\newblock Two neural-network-based methods for solving elliptic obstacle
  problems.
\newblock \emph{Chaos Solitons Fractals}, 161:\penalty0 Paper No. 112313, 10,
  2022.
\newblock ISSN 0960-0779.
\newblock \doi{10.1016/j.chaos.2022.112313}.
\newblock URL \url{https://doi.org/10.1016/j.chaos.2022.112313}.

\end{thebibliography}

\end{document}